\newcommand{\ep}{\varepsilon}
\DeclareMathOperator{\R}{\mathbb{R}}
\DeclareMathOperator{\Z}{\mathbb{Z}}
\newcommand{\Rn}{\R^{n}}
\newcommand{\Schr}{Schr\"{o}dinger }
\newcommand{\jap}[1]{\!\left<#1\right>}
\providecommand{\norm}[1]{\lVert#1\rVert}
\providecommand{\abs}[1]{\lvert#1\rvert}
 \newcommand{\linf}{L^{\infty}}
 \newcommand{\les}{\lesssim}
 \newcommand{\BM}{\left[-M,M\right]}
\newcommand{\dx}{\partial_x}
\newcommand{\dxi}{\partial_{\xi}}
\newcommand{\laplace}{\triangle}
\newcommand{\grad}{\nabla}
\newcommand{\der}[2]{\frac{\partial #1}{\partial #2}}
\newcommand{\dt}{\partial_t}
\newcommand{\weakto}{\rightharpoonup}
\newcommand{\tld}[1]{\tilde{#1}}
\newcommand{\vu}{\vec{u}}
  \newcommand{\vue}{\vu^\ep}
    \newcommand{\vuee}{\vu^{\ep'}}
 \newcommand{\vv}{\vec{v}}
\newcommand{\vf}{\vec{f}}
\newcommand{\vg}{\vec{g}}
\newcommand{\vz}{\vec{z}}
\newcommand{\dz}{\partial_{\vz}}
  \newcommand{\vo}{\vec{0}}
\newcommand{\labelen}[1]{\renewcommand{\labelenumi}{(#1\arabic{enumi})} }
\newcommand{\Schw}{\mathscr{S}}
\newcommand{\B}{\mathcal{B}}
  \newcommand{\K}{\mathcal{K}}
\newcommand{\tb}{\tld{b}}
\newcommand{\tc}{\tld{c}}
\newcommand{\td}{\tld{d}}
\newtheorem{theorem}{Theorem}[section]
\newtheorem{corollary}{Corollary}
\newtheorem{lemma}[theorem]{Lemma}
\newtheorem{proposition}{Proposition}
\theoremstyle{definition}
\newtheorem{remark}{Remark}
\numberwithin{equation}{section}
\title[wellposedness of quasi-linear KdV]{Local well posedness of quasi-linear systems generalizing KdV}
\author{Timur-Akhunov}
\subjclass{Primary: 35Q53, 35G20.}
 \keywords{KdV, Quasilinear, Dispersive, Partial Differential Equations, Energy method}
  \email{takhunov@ucalgary.ca}
\begin{document}
\maketitle
     \centerline{\scshape Timur Akhunov }
\medskip
{\footnotesize
 \centerline{Department of Mathematics, University of Calgary}
 \centerline{2500 University Drive NW}
   \centerline{ Calgary, Alberta, T2N 1N4, Canada}
} 

\bigskip

 \centerline{(Communicated by the associate editor name)}

\begin{abstract}
In this article we prove local well-posedness of quasilinear dispersive systems of PDE generalizing KdV. These results adapt the ideas of Kenig-Ponce-Vega from the Quasi-Linear {\Schr}equations to the third order dispersive problems. The main ingredient of the proof is a local smoothing estimate for a general linear problem that allows us to proceed via the artificial viscosity method.
\end{abstract}
\section{Introduction}
In this paper we consider the following system of PDE:
\begin{equation}\label{eq}
  \begin{cases}
    & \dt \vu + a(x,t,\vu,\dx\vu,\dx^2\vu)\cdot\dx^3\vu + b(x,t,\vu,\dx\vu,\dx^2\vu)
    \cdot\dx^2\vu\\& + c(x,t,\vu,\dx\vu)\cdot\dx\vu + d(x,t,\vu)\cdot\vu = \vf(x,t) \text{ on } \R\times I\\
    & \vu(x,0) = \vu_0(x) \text{ on } \R\times\{0\}
  \end{cases}
\end{equation}
 where $I \subset [-1,1]$ is a time interval containing $0$; all functions are real valued; $\vu = (u_1,\ldots,u_n)$ is an unknown; $\vu_0=(u_{0,1},\ldots,u_{0,n})$ and $\vf = (f_1,\ldots,f_n)$ are given data; and coefficients $a$, $b$, $c$, $d$ are $n\times n$ matrix valued functions.\\

This way \eqref{eq} is a generalized KdV with the dispersive relation $a$ that depends on the unknown and its derivatives $(\vu,\dx \vu ,\dx^2\vu)$. \\

Moreover, well-posedness of \eqref{eq} allows one to study fully non-linear dispersive systems of the form:
     \begin{align*}
   \begin{cases}
     \dt \vv + f(x,t,\vv,\ldots \dx^3 \vv) = \vo \text{ for } (x,t) \in \R\times[-T,T]\\
     \vv(x,0) = \vv_0(x)
   \end{cases}
   \end{align*}
    if the non-linear function $\vf$ satisfies appropriate assumptions similar to the one for \eqref{eq}. Indeed, differentiating this equation and letting $\vu=(\vv,\dx \vv)$ allows one to reduce the problem to a quasi-linear one. \\
%
%

Well posedness of the semi-linear analogues of \eqref{eq}, where the top order $a\equiv 1$, particularly when $b\equiv 0$ and $c$ is a polynomial, is quite well-understood with Local Smoothing and Strichartz estimates playing a significant role, cf.. \cite{KPV96} and \cite{KenStaf97} and references within. While quasi-linear dispersive equations are of interest in physical applications, in particular to water waves with variable dispersion, they are far less understood.\\

Well posedness has been established for quasilinear dispersive equations with special algebraic structure for which conservation laws have been found, for example the following shallow water wave equation
 \begin{align*}
   u_t- u_{txx}+ 3uu_x= 2u_xu_{xx}+ uu_{xxx}
 \end{align*}
see \cite{Const00} and references therein. However, finding such conservation laws is not possible in general for \eqref{eq}.\\

 A major advance for the well-posedness of a scalar \eqref{eq} was a work of Craig-Kappeler-Strauss in \cite{CKS92}. However, in addition to being restricted to scalar equation, they had to make a technical assumption for the favorable sign of the coefficient $b$, which was not natural in light of the semi-linear results in \cite{KenStaf97}. However, the pioneering work of Kenig-Ponce-Vega \cite{KPV2004} for the Quasilinear {\Schr} equation suggested a method to prove the well-posedness of \eqref{eq} under more general assumptions on the coefficients.\\

The method of Kenig et al. was a modification of the energy method, which was a successful approach to treat well-posedness of quasi-linear wave equation in high regularity Sobolev spaces in the 70's cf... \cite{HorLec}. Namely, the energy method relies on estimates of the form $\norm{\vu}_{H^s} = O(\norm{\vu_0}_{H^s})$, which are proved via integration by parts, symmetry of the top order terms, Sobolev embedding and Grownwall inequality. However, the energy method cannot in general work for the \eqref{eq} without modification due to the infinite speed of propagation. Overcoming this difficulty of controlling the size of a solution by the data occupies most of this paper. The heart of the matter can already be seen, when trying to prove $L^2$ well-posedness for a linear case of \eqref{eq}, regularized by a parabolic term for $0\le \ep \le 1$.
 \begin{equation}
  \label{eq:lin}
  \begin{cases}
    \dt\vu + a(x,t)\dx^3\vu+b(x,t)\dx^2\vu+c(x,t)\dx\vu+d(x,t)\vu=-\ep\dx^4\vu+\vf(x,t)\\
    \vu(x,0)=\vu_0(x)
  \end{cases}
\end{equation}
The standard energy method is available to prove an $L^2$ estimate for $t\ge 0$, only when $b\ge 0$. However, modifications of the energy method are needed, in general. Below we review the several works that motivated the approach used in this paper.\\

In the case of $a=Id$, $c$ symmetric and an integrable $b$, Kenig-Staffilani in \cite{KenStaf97}, motivated by \cite{HayOza94}, were able to cancel $b\dx^2$ term with a change of variables $v(x,t)=\Phi(x,t) u(x,t)$ for an appropriate $\Phi$. After this change of variables a standard energy argument (as explained above) works for $v$ and hence gives the $L^2$ estimate for $I=[0,T]$ small enough
\begin{equation}\label{est:lin}
\norm{\vu}_{\linf_{I} L^2_x} \le A(\norm{\vu_0(x)}_{L^2}+\norm{\vf(x,t)}_{L^1_{I}L^2_x})
\end{equation}
where $\norm{\vf(x,t)}_{L^1_{I}L^2_x} = \int_I \norm{\vf(t)}_{L^2_x} dt$ and likewise for all other space-time norms from now on. Similar argument was at the heart of the energy estimate that Lim-Ponce used to prove well-posedness of a quasilinear {\Schr} equation in 1 dimension in \cite{LimPon02}. However, this argument does not seem to work for a general system \eqref{eq:lin} with merely a symmetric top order $a\dx^3$, or for the {\Schr} equation in more than one space dimension. As we show in section \ref{seclin} the symmetry of the coefficient $a$ is sharp for the estimate \eqref{est:lin}, in the sense that without this assumption, this estimate can be false. This suggested to proceed via a Local Smoothing estimate argument, with the side benefit of capturing the regularization effect for equations \eqref{eq:lin} and \eqref{eq}.\\

 Kenig-Ponce-Vega in \cite{KPV2004} proved well-posedness of quasi-linear {\Schr} equation in higher space dimensions by proving a Local Smoothing estimate generalizing from the case of a time independent variable coefficient linear {\Schr} in \cite{Doi94} and \cite{CKSt95}.  Local Smoothing for \eqref{eq:lin} means that for $\delta >\frac{1}{2}$
 \begin{equation}\label{eq:gain}
 \norm{\frac{1}{\jap{x}^{\delta}}\,\jap{\dx}\vu}_{L^2_IL^2_x} \le A(\norm{\vu_0(x)}_{L^2}+\norm{\jap{x}^\delta\jap{\dx}^{-1} \vf(x,t)}_{L^2_{I}L^2_x})
 \end{equation}
 where we use the notation $\jap{x}=(1+\abs{x}^2)^{\frac{1}{2}}$ and interpret $\widehat{\jap{\dx} u}(\xi) = \jap{\xi}\hat u(\xi)$ as a Fourier multiplier and hence \eqref{eq:gain} means that the solution of \eqref{eq:lin} is $1$ derivative more regular than $u_0$ and $2$ than $f$ at a cost of weights. Note, that this effect is local, as the \eqref{eq:lin} is time reversible and by \eqref{est:lin} cannot gain smoothness globally.\\

 Local Smoothing was first proven for KdV in \cite{Ka83}, \cite{Kru83} and for the linear {\Schr} $\dt u + i L u =f$ with $L=\laplace$ in \cite{Const88}, \cite{Sjo87}, \cite{Ve88}, \cite{KPV93} where the gain is $\frac{1}{2}$ derivative relative to $u_0$ and $1$ derivative relative to $f$ respectively. This was generalized to $\mathcal{L}=a_{ij}(x)\partial_i\partial_j+b(x)\grad$ in \cite{CKSt95} and \cite{Doi94}. In \cite{Doi00} Doi showed that, roughly speaking, under appropriate asymptotic flatness of the coefficients for $\mathcal{L}$ above local smoothing is equivalent to the non-trapping of the bicharacteristic flow generated by the principal symbol of the operator $\mathcal{L}$. Note that in the one spacial dimension, which is the relevant setting for this paper, the non-trapping condition is automatic by the coefficient assumption (NL1) and we will omit it.\\

 Our proof of the wellposedness also involves using Local Smoothing to prove the energy estimate, however as \eqref{eq} is of higher order than the {\Schr} equation the argument of \cite{Doi94} has to be modified. Using this method to prove wellposedness of higher dispersive systems will be done in the subsequent work.\\

To motivate the function space we use to prove well-posedness, we note that in order to prove the main linear estimate \eqref{est:lin} a decay of the coefficient $b$ is necessary. This phenomenon is similar to the Mizohata condition, which shows the necessity of $\sup_{x,t\abs{\omega}=1}\int_0^t\Im b(x+s\omega)\cdot \omega ds <\infty$ for the $L^2$ well-posedness of {\Schr} equation $\dt u + i\laplace u + b(x)\grad u = 0$, cf... \cite{Miz85}, and we prove a corresponding result for \eqref{eq:lin} explicitly in the section \ref{seclin}. On the non-linear level of \eqref{eq} this suggests an $L^1$ condition on $\vu$ and weighted Sobolev spaces provide a natural way to ensure an $L^1$ condition in an $L^2$-based Sobolev space. This motivates working with the weighted Sobolev spaces $H^{s,2}$ for $s\in \Z^+$, which we define as follows:
\begin{equation}
  \label{def:norms}
  \begin{split}
& \norm{f(x)}_{H^{s, 2}} \equiv \sum_{j=0}^k\norm{ \jap{x}^{k-j} f(x)}_{H^{s+3j}} \approx_{s,2} \sum_{j=0}^2\sum_{\alpha=0}^{s+3j}\norm{ \jap{x}^{2-j} \dx^\alpha f}_{L^2}
\end{split}
\end{equation}
Where $\norm{f(x)}_{H^s} \equiv  \norm{(1+\abs{\xi}^2)^\frac{s}{2}\hat{f}(\xi)}_{L^2} \approx_s  \norm{\dx^s f(x)}_{L^2}+ \norm{f(x)}_{L^2}$. The space $H^{s,2}$ can be shown equivalent to $H^{s+10}_x\cap H^{s}(\jap{x}^4 dx)$, which is a typical choice to ensure extra decay of the lower order term $b\dx^2$ as explained above.

\subsection*{Coefficient assumptions} We state the precise assumptions on the coefficients $a$ -- $d$ of the equation \eqref{eq}:
\labelen{NL}
\begin{enumerate}
  \item \emph{Dispersive property.} The top coefficient $a$ is {\bf symmetric},  that is  \[ a_{ij}(x,t,\vec{z})=a_{ji}(x,t,\vec{z}) \text{ for all } (x,t,z) \in D_M \equiv \R\times[-1,1]\times \BM^{3n}.\]
      Moreover, it is {\bf uniformly positive definite} in $D_M$ at time $0$, that is  for every $M > 0$ there exists a constant $\lambda_M > 0$
\[    a_{ij}(x,0,\vec{z})\xi_i\xi_j \ge \lambda_M\abs{\xi}^2\]
whenever $(x,\vec{z}) \in \R\times \BM^{3n}$.
      \item \label{Regularity} \emph{Regularity.}  Let $J\in \Z^+$ be a given positive integer. We assume that all the coefficients $a(x,t,\vz)$-$d(x,t,\vz) \in C^1_t\B^J_x C^{J}_{z}$, with the space $\B^J$ of $C^J$ functions with all $J$ derivatives bounded.
           Specifically, there exists a function $(J,M)\to C_{J,M}$ increasing in $J$ and $M>0$ separately, such that for each $J$ and $M$ the coefficients
          \begin{equation*}
            a_{ij},\, b_{ij} \in C^1_t \B^J_{x,\vz}(D_M),\,c_{ij} \in C^1_tB^J_{x,\vz}(\R\times[-1,1]\times\BM^{2n}),
          \end{equation*}
with norms bounded by $C_{J,M}$, e.g. for $a$ that means
            \begin{align*}
              & \sup_{0\le \alpha \le 1;\,0\le\beta+\abs{\gamma} \le J}  \norm{\dt^\alpha\dx^\beta \partial_{\vz}^\gamma a(x,t,\vz)}_{\linf_{D_M}} \le C_{J,M}
            \end{align*}
   \item \emph{Asymptotic flatness and decay of linear parts.} There exists $\delta' > \frac{1}{2}$ and a constant $C_0$, such that
   \begin{align*}
  & \norm{\jap{x}^{2\delta'}\dx a(x,0,\vo)}_{\linf}+\norm{\jap{x}^{2\delta'}b(x,0,\vo)}_{\linf} \le C_0\\
    & \norm{\jap{x}^{2\delta'}\dt\dx a(x,t,\vo)}_{\linf_{\R\times I}}\!\!\!+\norm{\jap{x}^{2\delta'}\dt b(x,t,\vo)}_{\linf_{\R\times I}}\le C_0\\
    & \norm{\jap{x}^{\delta'}\dx b^\dag(x,0,\vo)}_{\linf}+\norm{\jap{x}^{\delta'}c^\dag(x,0,\vo)}_{\linf} \le C_0\\
    & \norm{\jap{x}^{\delta'}\dt \dx b^\dag(x,t,\vo)}_{\linf_{\R\times I}}\!\!\!\!+\norm{\jap{x}^{\delta'}\dt c^\dag(x,t,\vo)}_{\linf_{\R\times I}} \le C_0
    \end{align*}
    where $b^\dag$ and $c^\dag$ are the antisymmetric parts of $b$ and $c$ respectively, defined as $c^\dag_{ij} = \frac{1}{2} (c_{ij}-c_{ji})$
\end{enumerate}
\labelen{}
  While any $\delta' > \frac{1}{2}$ works in the (NL) assumptions, provided the definition of $H^{s,2}$ is modified with a weight $\jap{x}$ replaced by $\jap{x}^{\delta'}$, for simplicity we set $\delta' =1$.\\

  Note, that unlike the constants $\lambda_M$ in (NL1), which depend only on data $\vu_0$, $C_{J,M}$ in $(NL2)$ is a family of constants that depend on the smoothness of the coefficients $J$ and the size of the solution $M$. This is a natural assumption as we consider nonlinear equations and the coefficients may grow with the size of the solution. The size of the solutions is a priori unknown and is one of the quantities to be estimated. Thus when using (NL2) we will be explicit in the choice of $J$ and $M$ to avoid a possible circularity.

  The proof is based on the artificial viscosity regularization of the initial value problem \eqref{eq} for a parameter
 $0<\ep\le 1$:
  \begin{equation}\label{IVPe}
  \begin{cases}
    \partial_t \vu^\ep = -\ep\dx^4 \vu^\ep - N_{\vu^\ep}\vu^\ep + \vec{f}\\
    \vu^\ep(x,0)=\vu_0(x)
  \end{cases}
  \end{equation}
  Where $N_u$ is the operator from  \eqref{eq}
  \begin{equation}\label{Nu}
    \begin{split}
      & N_{\vu} =a(x,t,\vu,\dx\vu,\dx^2\vu)\cdot\dx^3 + b(x,t,\vu,\dx\vu,\dx^2\vu)
    \cdot\dx^2\\
    & + c(x,t,\vu,\dx\vu)\cdot\dx + d(x,t,\vu)\cdot I
    \end{split}
  \end{equation}
We then construct solutions $\vue$ for the regularized problem and show that the solution $\vu = \lim_{\ep\to 0} \vue$ in the desired topology.
\subsection*{Main Theorem}
\begin{theorem}\label{thm:nl}
Suppose the coefficients of \eqref{eq} satisfy assumptions (NL1)-(NL3) with bounds $\lambda_M$, $C_0$ and a function $C_{J,M}$ for $J\ge 19$. Then \eqref{eq} is \textbf{locally well-posed} in $H^{8,2}$. \
\end{theorem}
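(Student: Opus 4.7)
My approach is to execute the artificial-viscosity scheme sketched after (NL3), leveraging the linear local smoothing estimate \eqref{eq:gain} at every stage. For each fixed $\ep\in(0,1]$, I first construct $\vue$ solving \eqref{IVPe} by Picard iteration on a short interval $[0,T_\ep]$ using the smoothing properties of the parabolic semigroup $e^{-\ep t\dx^4}$: the nonlinearity $N_{\vue}\vue$ loses at most three derivatives while the regularization recovers four, so a contraction in $C_t H^{8,2}$ closes. The resulting $\vue$ extends as long as $\norm{\vue(t)}_{H^{8,2}}$ stays bounded, so the real issue is to bound this norm uniformly in $\ep$ on a common time interval.

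The core of the proof is such a uniform a priori estimate
\begin{equation*}
\sup_{[0,T]}\norm{\vue(t)}_{H^{8,2}} \le A\bigl(\norm{\vu_0}_{H^{8,2}},\,\norm{\vf}_{L^1_I H^{8,2}}\bigr),
\end{equation*}
on a time $T>0$ depending only on $\norm{\vu_0}_{H^{8,2}}$ and on $\lambda_M$, $C_0$, $C_{J,M}$. I would freeze $\vue$ in the coefficients and treat \eqref{IVPe} as a linear problem of type \eqref{eq:lin} with $\tilde a(x,t)=a(x,t,\vue,\dx\vue,\dx^2\vue)$ and analogous $\tilde b,\tilde c,\tilde d$. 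The assumption $J\ge 19$ together with $\vue(t)\in H^{8,2}$ should ensure these linearised coefficients obey the hypotheses of the linear theory with constants governed by $\norm{\vue(t)}_{H^{8,2}}$; verifying this chain-rule bookkeeping---above all that the antisymmetric parts $\tilde b^\dag,\tilde c^\dag$ inherit the $\jap{x}^{-1}$-type decay demanded of the linear coefficients---will be the main technical obstacle, since the chain rule produces terms like $(\partial_{\vz} a)(x,t,\vue,\ldots)\dx\vue$ whose decay comes partly from (NL3) and partly from the weighted portion of the $H^{8,2}$ norm of $\vue$. Granting this, one differentiates \eqref{IVPe} $s$ times for $0\le s\le 8$ and multiplies by weights $\jap{x}^j$ for $0\le j\le 2$, applying the $L^2$ estimate \eqref{est:lin} together with local smoothing \eqref{eq:gain} at each order. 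Local smoothing is used essentially on the commutator $[\dx^s,\tilde a\dx^3]\vue$: its leading term $s(\dx\tilde a)\dx^{s+2}\vue$ lies one derivative above the energy threshold but factors as a decaying weight times a high derivative of $\vue$, exactly the object controlled by \eqref{eq:gain}; the weighted commutators $[\jap{x}^j,\dx^k]$ are strictly lower order. Summing over $s,j$ yields a closed Gr\"onwall inequality for $\norm{\vue}_{L^\infty_I H^{8,2}}$, closing on any $T$ small enough to absorb the $\vue$-dependence hidden in the coefficients.

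With the uniform bound in hand, $\{\vue\}$ is precompact in $C_I H^{s'}_{loc}$ for any $s'$ strictly below the top order contained in $H^{8,2}$ by Aubin-Lions, so along a subsequence $\vue\to\vu$ with $\vu$ solving \eqref{eq}. Upgrading this to strong convergence in $C_I H^{8,2}$, as well as uniqueness and continuous dependence on $\vu_0$, would all follow from a Bona-Smith style coupling: compare two approximating solutions in $L^2$ via \eqref{est:lin} applied to the difference equation (whose $\ep$-forcing $\ep\dx^4\vu^{\ep'}$ is controlled by the uniform $H^{8,2}$ bound), then interpolate against that bound. The budget $s=8$ together with $J\ge 19$ is calibrated precisely to close this interplay between the $H^s$ energy estimate, the one-derivative gain from local smoothing, and the derivatives of the coefficients spent through the chain rule.
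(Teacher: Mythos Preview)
Your outline matches the paper's strategy: parabolic regularization, uniform-in-$\ep$ estimates via the linear theory of Section~\ref{seclin}, passage to the limit, and Bona--Smith for top-order convergence and continuous dependence. Two corrections are worth making.

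First, the $H^{8,2}$ norm \eqref{def:norms} is $\sum_{j=0}^2\norm{\jap{x}^{2-j}f}_{H^{8+3j}}$, so the unweighted piece is $H^{14}$, not $H^8$. You must therefore differentiate \eqref{IVPe} up to $s=14$ (and apply $\jap{x}^2$ only up to $s=8$); the paper does exactly this in \eqref{eq:Hs}--\eqref{eq:wt}. Your stated range $0\le s\le 8$ would not recover the full norm.

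Second, the paper does not invoke the smoothing estimate \eqref{eq:gain} externally on the commutator $s(\dx\tilde a)\dx^{s+2}\vue$ as you propose. Instead that commutator is absorbed into the modified second-order coefficient $b^s$ of the differentiated linear problem \eqref{eq:Hs}, and the packaged estimate \eqref{est:lin} of Theorem~\ref{lem:apri} is reapplied; the one-derivative gain is already built into the gauge transform there. This is the same mechanism in different dress, but your version would require re-deriving the smoothing bound at each $s$ rather than citing \eqref{est:lin} once.

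Finally, the paper extracts the limit by showing $\{\vue\}$ is Cauchy in $C_I H^{13}$ directly (apply \eqref{est:lin} to the difference equation \eqref{diff}, then interpolate against the uniform $H^{14}$ bound), rather than via Aubin--Lions. Both work, but the Cauchy route immediately yields global-in-$x$ convergence without a separate tightness argument.
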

Specifically, by well-posedness we mean the following:
\begin{enumerate}
  \item \label{thm:nl:exist}\emph{Existence.} Let $R >0$ be given. Then there exists a $T>0$, such that if the data $        (\vu_0,\vf) \in Y\equiv H^{8,2}\times\left(L^1_{[-1,1]}H^{8,2}\cap C^0_{[-1,1]}H^{4,2}\right)$, such that
       \begin{equation}\label{eq:data}
\norm{(\vu_0,\vf)}_Y\equiv \norm{\vu_0}_{H^{8,2}}+\norm{\vf}_{L^1_{[-1,1]}H^{8,2}}+\norm{\vf}_{C^0_{[-1,1]}H^{4,2}} <R
      \end{equation}
  then \eqref{eq} has a solution $\vu\in X_I$, where $I=[0,T]$ and $X_I \equiv C^0_{I}H^{8,2}\cap C^1_I H^{4,2}$.
\item \label{thm:nl:uniq}\emph{Uniqueness.} $\vu$ is the unique solution of \eqref{eq} in $X_I$.
\item \label{thm:nl:cont} \emph{Continuous dependence.} The flow map $(\vu_0,\vf) \to \vu$ is continuous between $Y$ and $X_I$.
\item \label{thm:nl:pers}\emph{Persistence of regularity.} If $(\vu_0,\vf) \in Y\cap H^{s,2}\times L^1_{[-1,1]}H^{s,2}$ for $s>8$, then $\vu$ is in addition in $C^0_I H^{s,2}$.
\end{enumerate}
We make a few remarks about the Theorem \ref{thm:nl}.\\

We first note, that the time reversal $\vec{u}(x,t) \to \vec{u}(-x,-t)$ applied to \eqref{eq} preserves assumptions (NL1)--(NL3), thus Theorem \ref{thm:nl} extends to $[-T,0]$. The solution we construct has enough regularity to be classical and the uniqueness proved above is valid for $I= [-T,T]$. Thus Theorem \ref{thm:nl} produces a solution on $[-T,T]$. However, the parabolic regularization that we use to construct the solution requires a fixed sign of time and we stick to positive time intervals except for the uniqueness.\\

  The choice of well-posedness in $H^{8,2}$ and the choice of $T$ is determined by the main estimate Theorem \ref{lem:apri} that proves \eqref{est:lin} and its corollaries in the non-linear setting: Theorems \ref{thm:unif} and Proposition \ref{conv}.\\

   $H^{8,2}$ regularity for the well-posedness in \eqref{eq} is not sharp, but holds under very general assumptions on the coefficients and no smallness on data. While preparing this work for the publication, I have learned of a parametrix-based approach to the well-posedness of Quasi-linear {\Schr} for the small data in \cite{Marz11} in the lower regularity Sobolev spaces than in the \cite{KPV2004}. Adapting this approach to \eqref{eq} may allow to lower regularity for small data.\\



The argument of the proof of the Theorem \ref{thm:nl} also shows that \eqref{eq} has a Local Smoothing effect, that is 
   in addition to $X_I$, the solution is in $\vu \in L^2_I H^{9,2}(\jap{x}^{-4}dx)$.\\

A simple transformation $x\to -x$ in the equation \eqref{eq} shows that the dispersive property $(NL1)$ can taken with an opposite sign. That is, if in addition to (NL2)-(NL3), for every $M > 0$ there exists a constant $\lambda_M > 0$
\[    a_{ij}(x,0,\vec{z})\xi_i\xi_j \le -\lambda_M\abs{\xi}^2\]
whenever $(x,\vec{z}) \in \R\times \BM^{3n}$, then Theorem \ref{thm:nl} holds.\\

 We note that the continuous dependence in the Theorem \ref{thm:nl} is the best we can hope for as \eqref{eq} is quasilinear.\\

 Finally, the arguments in the proof of Theorem \ref{thm:nl} can be used to extend the persistence of regularity to $H^{s,k}$ rather than $H^{s,2}$.\\

 The rest of the paper is organized as follows. In the section \ref{seclin} we prove the main linear  estimate \eqref{est:lin}. In the section \ref{sec:parab} we prove the well-posedness of \eqref{IVPe} for a time independent of the regularization $\ep$. Finally, in the section \ref{seclim} we construct the solution of \eqref{eq} and prove continuous dependence.

\subsection*{Notation}
When estimating with multiplicative constants, we often write $A\les_{x,y} B$, to mean $A \le C(x,y) B$, where the constant $C(x,y)$ may change from line to line.\\
 As dependence on the integers $(8,2)$ and the constant in the Sobolev embedding $H^1(\R) \hookrightarrow \linf(\R)$ occurs frequently we do not explicitly state dependence on them.

    \section{Linear estimate}\label{seclin}
In this section we prove the estimate \eqref{est:lin} for the linear equation \eqref{eq:lin} and show the necessity of several coefficient assumption. Main estimates for the well-posedness in sections \ref{secuni} and \ref{seclim} are later reduced to it.\\

Let $T'>0$ and suppose \eqref{eq:lin} is valid on $[0,T']$. We assume that there exist constants $\lambda>0$, $C_1\ge \tld C_0>0$ and $\delta >\frac{1}{2}$, such that the coefficients $a$, $b$, $c$, $d$ of the equation \eqref{eq:lin} satisfy the following conditions on $\R\times I\equiv \R\times [0,T']$:
\labelen{L}
\begin{enumerate}
  \item \emph{Dispersive property.} Assume that the top coefficient is symmetric,  that is $a_{ij}(x,t)=a_{ji}(x,t)$. Moreover, it is uniformly positive definite with
\[    a_{ij}(x,0)\xi_i\xi_j \ge \lambda\abs{\xi}^2\]
uniformly in $x$ in $\R$ and $\xi\in \Rn$.
      \item \emph{Regularity.}  $a\in C^1_{I}\B^3_x$, $b \in C^1_{I} \B^2_x$, $c\in C^1_{I} \B^1_x$ and $d \in C^1_{I} \linf_x$ with the space $\B^k$ of bounded $C^k$ functions have the following bounds: 
          \begin{align*}
            & \norm{ a(x,0)}_{\B^3} + \norm{b(x,0)}_{\B^2} + \norm{c(x,0)}_{\B^1} + \norm{d(x,0)}_{\linf} \le \tld C_0\\
            & \norm{ a(x,t)}_{C^1_{I}\B^3} + \norm{b(x,t)}_{C^1_{I}\B^2} + \norm{c(x,t)}_{C^1_{I}\B^1} + \norm{d(x,t)}_{C^1_{I}\linf} \le C_1
          \end{align*}

   \item \emph{Asymptotic flatness and decay.}
   \begin{align*}
  & \norm{\jap{x}^{2\delta}\dx a(x,0)}_{\linf}+\norm{\jap{x}^{2\delta}b(x,0)}_{\linf} \le \tld C_0\\
    & \norm{\jap{x}^{2\delta}\dt\dx a(x,T)}_{\linf_{\R\times I}}\!\!\!+\norm{\jap{x}^{2\delta}\dt b(x,t)}_{\linf_{\R\times I}}\le C_1\\
    & \norm{\jap{x}^{\delta}\dx b^\dag(x,0)}_{\linf}+\norm{\jap{x}^{\delta}c^\dag(x,0)}_{\linf} \le \tld C_0\\
    & \norm{\jap{x}^{\delta}\dt \dx b^\dag(x,t)}_{\linf_{\R\times I}}\!\!\!\!+\norm{\jap{x}^{\delta}\dt c^\dag(x,t)}_{\linf_{\R\times I}} \le C_1
    \end{align*}
    where $b^\dag$ and $c^\dag$ are the antisymmetric parts of $b$ and $c$ respectively, defined as $c^\dag_{ij} = \frac{1}{2} (c_{ij}-c_{ji})$
\end{enumerate}
\labelen{}

\begin{theorem}
  \label{lem:apri}
There exist constants $A=A(\tld C_0,\lambda,\delta)$ and $T=T(\tld C_0,C_1,\lambda,\delta)\le 1$, such that if $\vu(x,t)$
  is a solution of \eqref{eq:lin} on $I=[0,T'] \subset [0,T]$ , then $\vu$ satisfies \eqref{est:lin}, i.e.
  \[
  \norm{\vu}_{\linf_{I} L^2_x} \le A(\norm{\vu_0(x)}_{L^2}+\norm{\vf(x,t)}_{L^1_{I}L^2_x})
  \]
  and \eqref{eq:gain}.
%
\end{theorem}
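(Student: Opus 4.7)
The plan is to establish both \eqref{est:lin} and \eqref{eq:gain} simultaneously via a single weighted energy identity, with a Kato-type multiplier designed to extract the hidden local smoothing gain from the top-order dispersive term. Concretely, I would set $\Psi(x) = \int_{-\infty}^x \jap{s}^{-2\delta}\,ds$, which is bounded (since $2\delta>1$), increasing, with $\Psi'(x)=\jap{x}^{-2\delta}$, and study the modified energy
\[
E(t) = \int_\R (1+\mu\Psi(x))\,|\vu(x,t)|^2\,dx
\]
for a small parameter $\mu=\mu(\tld C_0,\lambda,\delta)>0$ to be fixed at the end. Since $\Psi$ is bounded, $E(t)\asymp\norm{\vu(t)}_{L^2}^2$, so the differential inequality that I obtain for $E$ will yield \eqref{est:lin} directly.

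The heart of the computation is the top-order contribution $-2\int(1+\mu\Psi)\langle\vu,a\dx^3\vu\rangle\,dx$: after three integrations by parts using the symmetry of $a$, the principal piece is $-3\int(1+\mu\Psi)'\langle a\dx\vu,\dx\vu\rangle\,dx \le -3\lambda\mu\int \jap{x}^{-2\delta}|\dx\vu|^2\,dx$, a strictly negative quantity that, upon time integration, will furnish the local smoothing gain in \eqref{eq:gain}. The remaining top-order terms involve $\dx a$ (decaying like $\jap{x}^{-2\delta}$ by (L3)) and $\dx^2 a$; they produce a bound of the form $C\,E(t)$ plus an additional contribution to $\int \jap{x}^{-2\delta}|\dx\vu|^2$ with coefficient $\lesssim \mu\tld C_0$, which becomes subordinate to the $3\lambda\mu$ gain once $\mu$ is small. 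The parabolic regularization yields the harmless $-2\ep\norm{\dx^2\vu}_{L^2}^2\le 0$.

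For the lower-order terms I would use the symmetric/antisymmetric decomposition $b=b^s+b^\dag$, $c=c^s+c^\dag$. The $b^s\dx^2$ piece, after IBP, produces $2\int(1+\mu\Psi)\langle b^s\dx\vu,\dx\vu\rangle$, absorbed by the gain using $\norm{\jap{x}^{2\delta}b}_\infty\le\tld C_0$. The antisymmetric $b^\dag\dx^2$ is better behaved: since $\langle b^\dag w,w\rangle\equiv 0$ for any $w$, the quadratic-in-$\dx\vu$ term vanishes, and only a term involving $\dx b^\dag$ remains, handled via the weaker decay $\norm{\jap{x}^\delta\dx b^\dag}_\infty\le\tld C_0$. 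The $c^s\dx$ piece, by symmetric IBP, produces only $\norm{\vu}_{L^2}^2$-type contributions (which is why no decay on $c^s$ is needed), while $c^\dag\dx$ is controlled through the decay of $c^\dag$ itself via $|\int(1+\mu\Psi)\langle c^\dag\dx\vu,\vu\rangle|\lesssim\norm{\vu}\,\norm{\jap{x}^{-\delta}\dx\vu}\,\norm{\jap{x}^\delta c^\dag}_\infty$ followed by Young absorption into the gain. The $d\vu$ contribution is a direct bound, and the forcing gives at most $C\norm{\vf}_{L^2}\sqrt{E}$.

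Summing these pieces, I will arrive at a differential inequality $\frac{d}{dt}E(t) + c_1\int\jap{x}^{-2\delta}|\dx\vu|^2\,dx \le C(\tld C_0,\lambda,\delta)\,E(t) + C\norm{\vf(t)}_{L^2}\sqrt{E(t)}$ with $c_1>0$; a standard Gronwall argument over $I$ yields \eqref{est:lin} with constant $A$ depending only on $\tld C_0,\lambda,\delta$, and time-integrating the leftover gain delivers the $\dx\vu$ part of \eqref{eq:gain} (the $\jap{\dx}^{-1}\vf$ factor on the right of \eqref{eq:gain} arises from a duality pairing against the $\jap{\dx}\vu$ in the gain). The role of $T$ is only to ensure, via (L2)--(L3) and the $C^1_t$ hypothesis, that the coefficients at time $t$ remain within a bounded perturbation of their sharper time-$0$ bounds, so the final constants depend on $C_1$ only through the choice of $T$. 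The main obstacle will be the absorption bookkeeping: the quantity $\int\jap{x}^{-2\delta}|\dx\vu|^2$ appears both as the positive gain (coefficient $3\lambda\mu$) and as an upper bound for several lower-order pieces (coefficients proportional to $\mu\tld C_0$ or $\tld C_0$), so $\mu$ must be chosen so that every bad contribution is strictly subordinate; moreover, the $c^\dag$ term cannot be made to vanish by one IBP (unlike $\langle b^\dag\dx\vu,\dx\vu\rangle$), which is precisely why (L3) demands decay of $c^\dag$ itself rather than only of $\dx c^\dag$.
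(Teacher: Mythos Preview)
Your overall strategy---a Kato-type weighted energy that converts the dispersive top order into a negative local-smoothing term, combined with the symmetric/antisymmetric splitting of $b$ and $c$---is exactly the right mechanism, and essentially the same as the paper's. But the specific weight you propose, $w(x)=1+\mu\Psi(x)$ with $\mu$ \emph{small}, does not close the absorption, and in fact no linear-in-$\Psi$ weight can.

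The problem is the $b^s\dx^2$ contribution. After one integration by parts it produces
\[
2\int (1+\mu\Psi)\,\langle b^s\dx\vu,\dx\vu\rangle\,dx,
\]
and the decay $\|\jap{x}^{2\delta}b\|_\infty\le\tld C_0$ bounds this by
$2\tld C_0\,(1+\mu\|\Psi\|_\infty)\int\jap{x}^{-2\delta}|\dx\vu|^2$.
Meanwhile your gain from the top order is $\tfrac{3}{2}\lambda\mu\int\jap{x}^{-2\delta}|\dx\vu|^2$. Absorption requires
\[
\tfrac{3}{2}\lambda\mu \;>\; 2\tld C_0\bigl(1+\mu\|\Psi\|_\infty\bigr),
\]
which for small $\mu$ fails outright (the right side stays bounded below by $2\tld C_0$), and for large $\mu$ forces the unassumed smallness condition $3\lambda > 4\tld C_0\|\Psi\|_\infty$. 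The root cause is structural: the bad term carries the weight $w$ itself, whereas the gain carries only $w'=\mu\Psi'$, and for a linear weight the ratio $w'/w$ cannot be made uniformly large.

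The paper fixes this by using an \emph{exponential} weight, implemented as the gauge $\vv=e^{-\phi}\vu$ with $\phi(x)=-N\int_{-\infty}^x\jap{s}^{-2\delta}ds$ and $N=(2+10n\tld C_0)/(3\lambda)$ chosen \emph{large}. This is equivalent to taking $w=e^{2N\Psi}$, for which $w'=2N\Psi'\,w$: now both the gain and the bad $b^s$ term carry the same factor $w\,\jap{x}^{-2\delta}$, with coefficients $3N\lambda$ and $O(\tld C_0)$ respectively, and taking $N$ large enough wins. After the gauge, the energy argument for $\vv$ proceeds exactly along the lines you sketched (the paper's Proposition~\ref{apri:dt} and Lemma~\ref{lem:coeff}). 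So your plan is salvageable with this single change: replace $1+\mu\Psi$ by $e^{2N\Psi}$ and take $N$ large rather than small.
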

By a solution of \eqref{eq:lin} we mean a classical solution and hence by the Sobolev embedding $C^0_IH^5\cap C^1_I H^1$ would suffice.\\

Regularity of $a$-$d$, particularly with weights in (L3) determines the Sobolev exponent of $H^{8,2}$ in Theorem \ref{thm:nl}, where for simplicity, we set $\delta=\delta'=1$. Proof of analogues of Theorem \ref{lem:apri} with coefficients rougher than above leads to lowering regularity in Theorem \ref{thm:nl}.\\

  Note, that for the applications of Theorem \ref{lem:apri}, the constant $C_1$ will depend on the solution of the non-linear problem, while $\tld C_0$ will only depend on data. As we will use the constant $A=A(\lambda,\tld C_0)$ from Theorem \ref{lem:apri} to control the size of the solution and in turn $C_1$, it is crucial for $A$ not to depend on $C_1$.\\

It is not difficult to prove an $H^s$ version of \eqref{est:lin} and \eqref{eq:gain}, provided coefficients are more regular than (L2), by differentiating \eqref{eq:lin}, using the Theorem \ref{lem:apri} and choosing $T$ small to control lower order terms. However, for some estimates in the proof of Theorem \ref{thm:nl} we will not have such control of the coefficients, and we omit the $H^s$ estimate.

\begin{remark}\label{rem:lem:sign}
For $\ep=0$, Theorem \ref{lem:apri} is valid with $I=[-T',0]$ or $[-T',T']$ and $C^1_IH^1\cap C^0 H^4$ regularity.
\end{remark}
Remark follows from Theorem \ref{lem:apri} by a simple scaling of the equation. Indeed, sending $(x,t)\to (-x,-t)$ preserves assumptions (L1)-(L3), while changing the sign of the time.\\

Finally before proceeding with the proof of the Theorem \ref{lem:apri} in the subsection \ref{sec:lin:proof}, we motivate the coefficient assumptions (L1)--(L3) by showing the necessity of symmetry of $a$ and decay of $b$ for the Theorem \ref{lem:apri}.
\subsection{Symmetry of the top order}\label{sec:sym}
Similar to the hyperbolic systems, symmetry of \eqref{eq} is necessary for the well-posedness. Namely, we consider the following constant coefficient linear system that violates the symmetry in (NL1), but satisfies all other assumptions:
\begin{equation}\label{nonsym}
\begin{cases}
  & \dt \left(
          \begin{array}{c}
            u_1 \\
            u_2 \\
          \end{array}
        \right)               +\left(
                               \begin{array}{cc}
                                1 &  \delta\\
                                0 & 1 \\
                                \end{array}
                                    \right)        \dx^3 \left(
                                                    \begin{array}{c}
                                                    u_1 \\
                                                    u_2 \\
                                                    \end{array}
                                                    \right)         =\left(
                                                                  \begin{array}{c}
                                                                    0 \\
                                                                    0 \\
                                                                  \end{array}
                                                                    \right), \text{ where } \delta \neq 0\\
   &\vu(x,0)^T=\begin{pmatrix}
     u_{0,1}(x) & u_{0,2}(x)
   \end{pmatrix}
\end{cases}
\end{equation}
Taking a Fourier transform in space this equation reduces to an ODE, 
for which the explicit solution is
\begin{align*}
\hat \vu(\xi,t)^T = \begin{pmatrix}
  e^{i\xi^3t}(\hat{u}_{0,1}(\xi) + \ep i \xi^3t \hat{u}_{0,2}(\xi)) & e^{i\xi^3t}\hat{u}_{0,2}(\xi)
\end{pmatrix}
\end{align*}
We then take the data $\hat\vu_0(\xi)^T = \left(
                      \begin{array}{cc}
                        0 & \jap{\xi}^{-s-1}
                      \end{array}
                    \right)
$ and a computation shows\\ $\norm{\vu_0}_{H^s} = \sqrt{\pi}$, while for any $t\neq 0$, $\norm{\vu(t)}_{H^s_x} = \infty$. Therefore, \eqref{nonsym} is ill-posed in $H^s$ for any $s$ and hence in any $H^{s,2}$.


%
\subsection{Necessity of decay of the coefficient $b$}
Here we show that a slightly weaker form of asymptotic flatness (L3) is necessary for \eqref{est:lin} in the following special case. More precisely, we show that for a solution $u$ of
\begin{align}\label{eq:dec}
  \begin{cases}
     \dt u + \dx^3 u+b(x)\dx^2 u=f\\
     u(x,0)= u_0(x)
  \end{cases}
\end{align}
the condition on the coefficient $b$
\begin{equation}
  \label{decay}
  \sup_{(x,t,r)} \int_0^t b(x+ r \cdot s )\, r ds <\infty
\end{equation}
\textbf{is necessary for \eqref{est:lin}.}\\

We show necessity by a WKB method, similar to an argument of \cite{Miz85} for a \Schr equation. Let $u=e^{i\phi(x,t,\xi)} v(x,t,\xi)$, where $\phi = x\xi^2 + t\xi^6$. Then for an operator $L = \dt+\dx^3+b(x)\dx^2$ we compute
\begin{align*}
  e^{-i\phi} L(e^{i\phi} v) = (\dt v - 3\xi^4 \dx v - b(x)\xi^4 v) +\left[3i\xi^2 \dx^2v + \dx^3 v + 2 b(x)i\xi^2 \dx v + b(x)\dx^2 v\right]
\end{align*}
We set $v$ to be a solution of the transport equation
\[
\begin{cases}
  \dt v - 3\xi^4 \dx v - b(x)\xi^4 v = 0\\
  v(x,0) = v_0(x)
\end{cases}
\]
For which we get the explicit solution by the method of characteristics
\begin{align*}
  v(x,t,\xi) = e^{\int_0^t b(x+3\xi^4 s) \xi^4 ds} v_0(x+3\xi^4 t)
\end{align*}
We further define
\begin{align*}
  f = e^{-i\phi} \left[3i\xi^2 \dx^2v + \dx^3 v + 2 b(x)i\xi^2 \dx v + b(x)\dx^2 v\right]
\end{align*}
Thus $u$ solves \eqref{eq:dec}, $\norm{u}_{L^2}=\norm{v}_{L^2}$ and $\norm{u_0}_{L^2}=\norm{v_0}_{L^2}$.\\

Now assume that \textbf{\eqref{decay} does not hold}. Then there exists a $x_0$, $t_0$ and $r_0$ such that
\begin{align*}
  \int_0^{t_0} b(x_0+ r_0 \cdot s ) r_0 ds \ge 6\log (3A)
\end{align*}
Moreover we can assume by rescaling $t_0$ with $\frac{t_0}{r_0}$ that $r_0=1$. Hence for $x$ near $x_0$
\begin{align}\label{decay:not}
  \int_0^{t_0} b(x+  s )  ds \ge 6\log (2A)
\end{align}
Now define $t_0^\xi = \frac{t_0}{3\xi^4}$ and likewise $t^\xi$ for any $t$.
\begin{equation}
  \label{transp}
  v(x,t^\xi,\xi) =e^{\int_0^{t^\xi} b(x+3\xi^4 s) \xi^4 ds} v_0(x+3\xi^4 t^\xi)= e^{\frac{1}{3}\int_0^{t} b(x+ s) ds} v_0(x+t)
\end{equation}
Now let $v_0 \in C_0^\infty(\R)$ be a function compactly supported near $x_0-t_0$ with $\norm{\vv_0}_{L^2} = 1$. Then
\begin{align*}
  \norm{\vv(x,t_0^\xi,\xi)}_{L^2_x}^2 \ge \int_{x \approx x_0} (2A)^2 \abs{\vv_0(x)}^2 dx = (2A)^2
\end{align*}
Moreover, by \eqref{transp} and if \textbf{$b$ in $\B^3$} (i.e. $C^3$, bounded and with bounded derivatives) we show that  $v(x,t^\xi,\xi)$, \ldots $\dx^3v(x,t^\xi,\xi)$ are uniformly bounded in $L^2_x$ for  $t^\xi \in I_{\xi} = \left[ -\abs{t_0^\xi},\abs{t_0^\xi}\right]$. Therefore,  $
  \norm{f}_{L^1_{I_\xi}L^2_x} \les \int_{-\abs{t_0^\xi}}^{\abs{t_0^\xi}} \abs{\xi}^2 dt = o(1)$, as $\xi \to \infty$. However,  \eqref{est:lin} and calculations above imply that $  2 A \le \norm{\vu}_{\linf_{I_\xi} L^2_x} \le A (1 + o(1))$, which is a contradiction for $\xi$ large enough.
\subsection{Proof of Theorem \ref{lem:apri}}

\label{sec:lin:proof}

To proceed we first change the dependent variable (or gauge it) and then proceed with the energy method for the gauged problem.\\

 We use $\jap{x}^{-2\delta}$ to define a multiplicative change of variables as follows:
 \begin{equation}\label{phi}
 \phi(x)=-N\int_{-\infty}^x  \jap{x'}^{-2\delta} dx' \text{ with }N=\frac{2+10n\tld C_0}{3\lambda}
 \end{equation}
 Because the integral above is convergent and $\abs{\dx^\alpha \jap{x}^\beta} \le C(\alpha,\beta) \jap{x}^{\beta-\alpha}$ we get $\phi \in \B^\infty$, i.e. it is bounded and all of its derivatives are bounded. Moreover,
 \[
 \dx\phi(x) = -N\jap{x}^{-2\delta} \le 0
 \]
 We now define the gauged variable
  \begin{align}\label{gauge}
   \vv(x,t)=e^{-\phi(x)}\vu(x,t),
 \end{align}
Definition of $\phi$ implies that $e^{\phi}$ and $e^{-\phi}$ are also in $\B^\infty$, and hence
 \[
 \norm{\vv}_{L^2} = \norm{e^{-\phi}\vu}_{L^2} \approx_{N,\delta} \norm{\vu}_{L^2_x}
 \]
 Likewise, by the product rule and duality,
 \begin{align}\label{eq:compar}
   \norm{\jap{x}^k\vu}_{H^{s}} \approx_{N,\delta,k} \norm{\jap{x}^k\vv}_{H^{s}} \approx_{s,k} \norm{\jap{x}^k \jap{\dx}^s\vv}_{L^2}
 \end{align}
 for $s=\pm 1$.\\

 Inverting \eqref{gauge} we write \eqref{eq:lin} as:
 \begin{align*}
    & \dt(e^{\phi(x)}\vv) + a(x,t)\dx^3(e^{\phi(x)}\vv)+b(x,t)\dx^2(e^{\phi(x)}\vv)+c(x,t)\dx(e^{\phi(x)}\vv)\\
    & +d(x,t)e^{\phi(x)}\vv = -\ep\dx^4(e^{\phi(x)}\vv)+\vf(x,t)
 \end{align*}
 Then $\vv$ satisfies the following "gauged" system
 \begin{equation}\label{eq:gauge}
   \begin{cases}
     & \dt\vv + L\vv=-\ep\dx^4\vv+\ep R_3\vv+\vg(x,t)\\
    & \vv(x,0)=\vv_0(x)
   \end{cases}
 \end{equation}
 where
 \begin{align*}
      & \vg(x,t)=e^{-\phi(x)}\vf(x,t) \text{ and } \vv_0(x) = e^{-\phi(x)}\vu_0(x)
 \end{align*}
 and the operator $L = a(x,t)\dx^3+\tb(x,t)\dx^2+\tc(x,t)\dx+\td I$
has the coefficients
 \begin{align*}
   & \tb_{ij}(x,t) = b_{ij}(x,t)+3a_{ij}(x,t)\dx\phi(x)\\
   & \tc_{ij}(x,t) = c_{ij}(x,t)+2b_{ij}(x,t)\dx\phi(x)+3a_{ij}(x,t)([\dx\phi]^2 + \dx^2\phi)\\
   & \td_{ij}(x,t) = d_{ij}(x,t)+c_{ij}(x,t)\dx\phi(x)+b_{ij}(x,t)([\dx\phi]^2 + \dx^2\phi)\\
    & + a_{ij}(x,t)([\dx\phi]^3+3\dx^2\phi\dx\phi+\dx^3\phi)
 \end{align*}
 and $R_3=[-\dx^4,e^\phi]$ is a third order differential with $\B^\infty$ coefficients with norms controlled by $N$ and $\delta$.\\

 We now proceed with the energy estimates. Taking a dot product of \eqref{eq:gauge} by $\vv$ and integrating in $x$ we get
 \begin{align}\label{gge1}
    \int \dt\vv\cdot\vv \,dx + \int L\vv\cdot\vv \,dx =-\ep\int\dx^4\vv\cdot\vv\, dx +\ep \int R_3\vv\cdot\vv \,dx +\int \vg\cdot\vv \,dx
 \end{align}
 Note that $L^*$, the adjoint of the operator $L$, is $L^*\vv = - a^T \dx^3\vv + l.o.t.$, where $a^T_{ij}=a_{ji}$ and $l.o.t.$ are terms having less than $3$ derivatives of $\vv$. Hence integration by parts of \eqref{gge1} gives
 \begin{equation}\label{gge2}
   \begin{split}
     \tfrac{1}{2}\dt \norm{\vv}_{L^2}^2 = \int B\dx \vv\cdot \dx \vv + \int C\dx \vv \cdot \vv + \int D\vv \cdot \vv \\ - \ep\int\dx^4\vv\cdot\vv +\ep \int R_3\vv\cdot\vv +\int \vg\cdot\vv \equiv \sum_{j=1}^6 I_j
   \end{split}
 \end{equation}
 where
 \begin{align*}
   & B_{ji}= B_{ij} = -3\dx a_{ji}+b_{ij}+b_{ji}+6\dx \phi a_{ij}\\
   & -C_{ji}=C_{ij}=\dx (b_{ij}-b_{ji}) + (c_{ij}-c_{ji}) + 2(b_{ij}-b_{ji})\dx\phi\\
    & D_{ij} = -\dx^3a_{ji}+\dx^2\tb_{ji}-\dx \tc_{ji}+\td_{ij}+\td_{ji}
 \end{align*}
We now claim that the Theorem \ref{lem:apri} reduces to the following proposition.


    \begin{proposition}\label{apri:dt}
There exist $A=A(\tld C_0, \lambda, \delta)$ and $T=T(C_1,\tld C_0, \lambda, \delta)$, such that for $0\le t\le T'\le T\le 1$, the solution of \eqref{eq:gauge} satisfies:
   \begin{equation}\label{eq:dt}
\dt\norm{\vv(x,t)}_{L^2_x}^2 + \int \jap{x}^{-2\delta}\abs{\dx\vv}^2dx \le A   \norm{\vv(x,t)}_{L^2_x}^2 + 2\abs{\int \vg\cdot \vv dx}
   \end{equation}
 \end{proposition}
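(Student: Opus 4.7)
I would estimate the six terms $I_1,\ldots,I_6$ in the energy identity (2.7) separately, using $I_4=-\ep\|\dx^2\vv\|_{L^2}^2$ as a parabolic sink for $\ep$-small errors and extracting the local smoothing contribution $\int\jap{x}^{-2\delta}|\dx\vv|^2$ from $I_1$ via the gauge; the remaining pieces are to be absorbed either into this local smoothing term or into a bound of the form $A\|\vv\|_{L^2}^2$, while the forcing $I_6$ stays on the right-hand side as in (2.8). The careful point throughout is that the constant $A$ must depend only on $\tld C_0,\lambda,\delta$: every coefficient quantity at time $t$ will be compared to its $t=0$ value via the $C^1_t$ hypotheses in (L2)--(L3), with the discrepancy made small by choosing $T=T(C_1,\tld C_0,\lambda,\delta)$.

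\textbf{The top-order estimate for $I_1$.} Split $B=E+6(\dx\phi)a$ with $E=-3\dx a+b+b^T$. Hypothesis (L1) together with $\|\dt a\|_{L^\infty}\le C_1$ gives $a(\cdot,t)\xi\cdot\xi\ge\tfrac{\lambda}{2}|\xi|^2$ for $T\le\lambda/(2C_1)$, so the gauge contribution obeys $6(\dx\phi)a\,\xi\cdot\xi=-6N\jap{x}^{-2\delta}a\,\xi\cdot\xi\le -3N\lambda\jap{x}^{-2\delta}|\xi|^2$. The perturbation $E$ is bounded entrywise by $(5\tld C_0+5TC_1)\jap{x}^{-2\delta}$ via (L3) and its time-derivative component, hence in operator norm by $n(5\tld C_0+5TC_1)\jap{x}^{-2\delta}$. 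The defining choice $N=(2+10n\tld C_0)/(3\lambda)$ in (2.2) is engineered so that $3N\lambda-5n\tld C_0\ge 2$; shrinking $T$ further so that $5nTC_1\le 1$ then yields $B(\cdot,t)\xi\cdot\xi\le -\jap{x}^{-2\delta}|\xi|^2$, and therefore $I_1\le -\int\jap{x}^{-2\delta}|\dx\vv|^2$.

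\textbf{Lower-order pieces and conclusion.} The matrix $C$ is antisymmetric by inspection of its definition, and by (L3) (decay of $\dx b^\dag$ and $c^\dag$) together with the same short-time propagation as above $|C(\cdot,t)|\les\jap{x}^{-\delta}$; a weighted Cauchy--Schwarz gives $|I_2|\le\eta\int\jap{x}^{-2\delta}|\dx\vv|^2+C_\eta\|\vv\|_{L^2}^2$, with $\eta$ chosen small to absorb the first summand into $I_1$. The entries of $D$ involve at most three derivatives of $a$, two of $b$, one of $c$ or $d$, together with $\dx$-derivatives of $\phi\in\B^\infty$, so (L2) with $T\le 1/C_1$ gives $\|D(\cdot,t)\|_{L^\infty}\le A(\tld C_0,\lambda,\delta)$ and $|I_3|\les\|\vv\|_{L^2}^2$. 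For $I_5$, expand $R_3=[-\dx^4,e^\phi]$ as a third-order operator with $\B^\infty$ coefficients depending on $N,\delta$; integrate the $\dx^3$ summand by parts twice, and use the interpolation $\|\dx\vv\|_{L^2}^2\le\|\vv\|_{L^2}\|\dx^2\vv\|_{L^2}$ to absorb all $\ep\|\dx^2\vv\|_{L^2}^2$ contributions into $I_4$, leaving only $\ep\|\vv\|_{L^2}^2\le\|\vv\|_{L^2}^2$ errors. Summing the bounds and doubling (2.7) produces (2.8). The substantive obstacle is precisely the $C_1$-independence of $A$: one has to verify that $3N\lambda$ beats $5n\tld C_0$ with enough slack to absorb both the $O(TC_1)$ time drift and the $\eta$-loss from $I_2$, which is exactly why $N$ is defined with the specific constants in (2.2).
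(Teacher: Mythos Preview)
Your proposal is correct and follows essentially the same approach as the paper: term-by-term estimation of $I_1,\ldots,I_6$, with the gauge contribution $6(\dx\phi)a$ in $B$ dominating the perturbation $-3\dx a+b+b^T$ for $I_1$, weighted Cauchy--Schwarz for $I_2$, a direct $L^\infty$ bound on $D$ for $I_3$, and absorption of $I_5$ into the parabolic term $I_4$. The only cosmetic differences are that the paper isolates the short-time coefficient propagation and the pointwise bounds on $B,C,D$ as two preliminary lemmas, and handles $I_5$ via $H^{3/2}$ interpolation and pseudodifferential boundedness rather than your more elementary integration-by-parts argument.
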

 Indeed, discarding the second term on the right hand side of \eqref{eq:dt} and the Grownwall inequality imply
 \begin{align}\label{eq:dt2}
    \norm{\vv(x,t)}_{L^2_x}^2 \le e^{At}( \norm{\vv_0}_{L^2}^2 + \int_0^t \abs{\int \vg \cdot \vv dx} dt)
 \end{align}
 Thus by the Cauchy-Schwartz inequality we get for $I=[0,T']$
 \begin{align*}
   \norm{\vv}_{L^1_I L^2_x} \le e^{At}(\norm{\vv_0}_{L^2} + \norm{\vg}_{L^1_IL^2_x})
 \end{align*}
 which is \eqref{eq:lin} after we use the comparability of the norms of $\vu$ and $\vv$.\\
Integrating \eqref{eq:dt} and using \[\int \vg \cdot \vv dx = \int \jap{x}^{\delta}\jap{\dx}^{-1}\vg\cdot \jap{\dx}(\jap{x}^{-\delta}\vv) dx\]
 in \eqref{eq:dt2} gives the \eqref{eq:gain} after the comparability of the norms.
\subsection{Coefficient estimates}
Before proving the Proposition \ref{apri:dt}, we first rephrase (L1)-(L3) in the following form
\begin{lemma}\label{claim:T}
  There exists $T =T(\tld C_0,\lambda,C_1)$ small enough, such that for $0\le t\le T'\le T$
  \begin{description}
    \item[L'1]
    \[
     a_{ij}(x,t)\xi_i\xi_j \ge \frac{\lambda}{2}\abs{\xi}^2
    \]
    uniformly in $x$.
  \item[L'2]
      \begin{align*}
            & \norm{ a(x,t)}_{C^0_{I}\B^3} + \norm{b(x,t)}_{C^0_{I}\B^2} + \norm{c(x,t)}_{C^0_{I}\B^1} + \norm{d(x,t)}_{C^0_{I}\linf} \le 2 \tld C_0
          \end{align*}
   \item[L'3]
   \begin{align*}
  & \norm{\jap{x}^{2\delta}\dx a(x,t)}_{\linf}+\norm{\jap{x}^{2\delta}b(x,t)}_{\linf} \le 2 \tld C_0\\
    & \norm{\jap{x}^{\delta}\dx b^\dag(x,t)}_{\linf}+\norm{\jap{x}^{\delta}c^\dag(x,t)}_{\linf} \le 2\tld C_0\\
    \end{align*}
    \end{description}
\end{lemma}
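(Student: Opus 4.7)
The lemma is a continuity-in-time statement: at $t=0$ the coefficients satisfy (L1) and the $\tld C_0$-bounds of (L2)--(L3), and (L2)--(L3) supply $\linf$ control (by $C_1$) on the time derivatives of exactly those quantities we need to propagate. So the plan is to invoke the fundamental theorem of calculus in $t$ and choose $T$ so small that the accumulated drift is negligible compared to the initial bound.

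Concretely, I would argue as follows. For every scalar quantity $Q(x,t)$ appearing on the left-hand side of (L'1)--(L'3), the corresponding $\dt Q$ is bounded in $\linf_{\R\times I}$ by $C_1$ thanks to (L2)--(L3). Hence for $0\le t\le T'\le T$,
\[
 |Q(x,t)-Q(x,0)|\;\le\;\int_0^t |\dt Q(x,s)|\,ds\;\le\;T\,C_1 .
\]
Applying this pointwise to each entry $a_{ij}(x,t)-a_{ij}(x,0)$ gives
\[
 a_{ij}(x,t)\xi_i\xi_j \;\ge\; a_{ij}(x,0)\xi_i\xi_j - n^{2}\,T\,C_1\,|\xi|^{2}\;\ge\;\bigl(\lambda-n^{2}TC_{1}\bigr)|\xi|^{2},
\]
so (L'1) follows as soon as $T\le \lambda/(2n^{2}C_{1})$. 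For (L'2) one applies the same fundamental-theorem estimate in the $\B^{k}$ or $\linf$ norms (which is legitimate because (L2) supplies $C^{1}_{I}\B^{k}$ bounds, hence $\linf$-control of $\dt a,\dt b,\dt c,\dt d$ together with their spatial derivatives) to obtain
\[
 \norm{a(x,t)}_{\B^{3}}\le \tld C_{0}+TC_{1}, \qquad \norm{b(x,t)}_{\B^{2}}\le \tld C_{0}+TC_{1},
\]
and similarly for $c,d$. For (L'3) we apply the same argument to the weighted quantities, using directly the bounds on $\jap{x}^{2\delta}\dt\dx a$, $\jap{x}^{2\delta}\dt b$, $\jap{x}^{\delta}\dt\dx b^{\dag}$, $\jap{x}^{\delta}\dt c^{\dag}$ from (L3), e.g.
\[
 \jap{x}^{2\delta}\bigl|\dx a(x,t)-\dx a(x,0)\bigr|\le T\,\norm{\jap{x}^{2\delta}\dt\dx a}_{\linf_{\R\times I}}\le TC_{1}.
\]
All the remaining cases are verbatim. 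Choosing
\[
 T\;=\;\min\!\Bigl\{1,\;\tfrac{\lambda}{2n^{2}C_{1}},\;\tfrac{\tld C_{0}}{C_{1}}\Bigr\}
\]
makes every right-hand side no larger than $2\tld C_{0}$ (resp.\ $\tfrac{\lambda}{2}|\xi|^{2}$), proving the three claims.

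There is essentially no obstacle here; the lemma is really a bookkeeping restatement that packages the data-dependent constants $\lambda,\tld C_{0}$ so that they survive on a time interval depending on the (solution-dependent) constant $C_{1}$. The only minor subtlety is keeping clear that the \emph{size} of $T$ is allowed to depend on $C_{1}$, whereas the constants $\lambda/2$ and $2\tld C_{0}$ that appear on the right do not; this is precisely the separation highlighted in the paragraph after Theorem~\ref{lem:apri}, and it is what makes the lemma usable in the nonlinear bootstrap later on.
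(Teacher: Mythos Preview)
Your argument is correct and is essentially identical to the paper's own proof, which likewise applies the fundamental theorem of calculus in $t$ to get $a_{ij}(x,t)\xi_i\xi_j\ge \lambda|\xi|^2-nTC_1|\xi|^2$ and then says ``(L'2) and (L'3) are established similarly.'' The only cosmetic difference is the harmless constant in front of $TC_1$ ($n$ versus your $n^2$); your added remark on why $T$ may depend on $C_1$ while the resulting bounds do not is exactly the point of the lemma.
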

 \begin{proof}[Proof of Lemma \ref{claim:T}]
By the Fundamental theorem of Calculus, (L1) and (L2)

\[
 a_{ij}(x,t)\xi_i \xi_j = \left[a_{ij}(x,0)+\int_0^t\dt a_{ij}(x,t')dt'\right]\xi_i\xi_j
 \ge \lambda\abs{\xi}^2 - nTC_1\abs{\xi}^2
\]
Which implies (L'1) for $T$ small enough.\\

(L'2) and (L'3) are established similarly.
 \end{proof}
 \begin{lemma}\label{lem:coeff}
   With $0\le t\le T$ as in Lemma \ref{claim:T}
   \begin{align}\label{eq:B}
     \jap{x}^{2\delta} B_{ij}(x,t)\xi_i\xi_j \le - 2\abs{\xi}^2
   \end{align}
   Moreover, there exists a constant $A =A(\tld C_0,\lambda,\delta)\ge 0$, such that
   \begin{align}
\label{eq:C}     \norm{\jap{x}^{\delta} C(x,t)}_{\linf_x} \le \sqrt{A}\\
\label{eq:D}     \norm{D(x,t)}_{\linf_x} \le A
   \end{align}
 \end{lemma}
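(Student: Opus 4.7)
The plan is to read off each of the three bounds \eqref{eq:B}--\eqref{eq:D} directly from the definitions of $B$, $C$, $D$ together with the gauge identity $\dx\phi=-N\jap{x}^{-2\delta}$, the positivity of $a$ from (L'1), the decay of the relevant coefficients from (L'3), and the size bound (L'2). The whole point of having introduced $\phi$ in \eqref{phi} with the particular constant $N=(2+10n\tld C_0)/(3\lambda)$ is to make the $B$-estimate close up exactly, so that is where I would start.

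For \eqref{eq:B}, the crucial observation is that the only ``good'' term in $B$ is $6\dx\phi\,a_{ij}$. Substituting $\dx\phi=-N\jap{x}^{-2\delta}$ and using (L'1) gives
\[
\jap{x}^{2\delta}\cdot 6\dx\phi\,a_{ij}\xi_i\xi_j \le -6N\cdot\tfrac{\lambda}{2}\abs{\xi}^2 = -3N\lambda\abs{\xi}^2.
\]
The remaining entries $-3\dx a_{ji}+b_{ij}+b_{ji}$ each carry the factor $\jap{x}^{-2\delta}$ by (L'3), so multiplying by $\jap{x}^{2\delta}$ and bounding the quadratic form componentwise gives at most $(6\tld C_0+4\tld C_0)n\abs{\xi}^2=10n\tld C_0\abs{\xi}^2$. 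Adding these and inserting the definition of $N$ yields $(10n\tld C_0-3N\lambda)\abs{\xi}^2=-2\abs{\xi}^2$, which is \eqref{eq:B}. This is the step that fixes $N$, and morally the only ``real'' step in the lemma.

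For \eqref{eq:C}, I would rewrite $C_{ij}=2\dx b^\dag_{ij}+2c^\dag_{ij}+4b^\dag_{ij}\dx\phi$ using $b_{ij}-b_{ji}=2b^\dag_{ij}$ and the analogous identity for $c$. The point is that $C$ involves only the antisymmetric parts, which by (L'3) decay like $\jap{x}^{-\delta}$ (for $\dx b^\dag$ and $c^\dag$) while $b^\dag$ itself is merely bounded by (L'2); since $\dx\phi$ decays like $\jap{x}^{-2\delta}$, the product $b^\dag\dx\phi$ also decays at least like $\jap{x}^{-\delta}$. Multiplying through by $\jap{x}^{\delta}$ gives a pointwise bound depending only on $\tld C_0$, $\lambda$, $\delta$ (the latter two enter through $N$).

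For \eqref{eq:D} there is no decay to exploit and none is needed: the entries of $D$ involve at most three $x$-derivatives of $a$, two of $\tb$, one of $\tc$, and $\td$ itself. Each of these is bounded by (L'2) together with the fact that $\phi$ and all of its derivatives are bounded (with constants depending only on $N$ and $\delta$, equivalently on $\tld C_0$, $\lambda$, $\delta$). So $\norm{D}_{\linf_x}$ is bounded by some $A(\tld C_0,\lambda,\delta)$, which (after possibly enlarging) is the same $A$ used in \eqref{eq:C}. No obstacle here beyond bookkeeping; the entire content of the lemma is the $B$-computation.
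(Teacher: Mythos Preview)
Your proposal is correct and follows the same route as the paper's proof: the $B$-bound is obtained exactly as you describe by splitting off the $6\dx\phi\,a_{ij}$ term, using (L'1) and (L'3), and closing with the choice of $N$; for $C$ and $D$ the paper simply cites (L'3), \eqref{phi}, and (L'2) respectively, which is precisely your argument in compressed form. If anything, your write-up for $C$ and $D$ is more explicit than the paper's one-line treatment.
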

 \begin{proof}[Proof of Lemma \ref{lem:coeff}]
By the definition of $B$ in \eqref{gge2}, (L'1) and (L'3):
 \begin{align*}
  & \jap{x}^{2\delta} B_{ij}(x,t)\xi_i\xi_j \le 6\jap{x}^{2\delta} \dx \phi(x) a_{ij}(x,t)\xi_i\xi_j\\
   & + n\jap{x}^{2\delta} \sup_{ij}(3\abs{\dx a_{ij}(x,t)}+\abs{b_{ij}(x,t)+b_{ji}(x,t)})\abs{\xi}^2\\
  & \le  (-3 N \lambda + 10n\tld C_0 )\abs{\xi}^2
\end{align*}
Which proves \eqref{eq:B} by the choice of $N$ in \eqref{phi}.\\
Definition of $C$ in \eqref{gge2}, (L'3) and \eqref{phi} imply \eqref{eq:C}. \eqref{eq:D} is done similarly.
 \end{proof}
\begin{proof}[Proof of Proposition \ref{apri:dt}]
We estimate \eqref{gge2} term by term in reverse order:\\
Trivially,
\[
I_6 \le \abs{\int \vv\cdot\vg dx}
\]
For $I_5$, by the Calculus of Pseudo-Differential Operators $\jap{\dx}^{-\frac{3}{2}}R_3$ is a {$\Psi$DO} of order $\frac{3}{2}$ and hence maps $H^{\frac{3}{2}}$ to $L^2$ by the boundedness of Pseudodifferential operators. Thus
 \begin{align*}
   &I_5= \ep \int  \jap{\dx}^{-\frac{3}{2}} R_3\vv\cdot\jap{\dx}^{\frac{3}{2}} \vv dx  \les_{N,\delta} \ep \norm{\vv}_{H^{\frac{3}{2}}}^2
 \end{align*}
 Hence interpolation of $H^{\frac{3}{2}}$ between $L^2$ and $H^2$, Cauchy inequality and \eqref{def:norms} give
 \begin{align*}
   I_5 \le \ep\norm{\vv}_{H^2}^2 + A\norm{\vv}_{L^2}^2 \le \ep\int \abs{\dx^2\vv}^2 dx + A\norm{\vv}_{L^2}^2
 \end{align*}
 While for $I_4$, integrating by parts gives
 \begin{align*}
   I_4 = -\ep\int \dx^4\vv \cdot \vv dx = -\ep\int \abs{\dx^2\vv}^2 dx
 \end{align*}
We use \eqref{eq:D} for $I_3$
\begin{align*}
  I_3 = \int D\vv\cdot \vv \le \norm{D(t)}_{\linf_{x}}\norm{\vv}_{L^2}^2 \le A\norm{\vv}_{L^2}^2
\end{align*}
By Cauchy-Schwartz and \eqref{eq:C} we estimate
\begin{align*}
    I_2 = \int \frac{\dx \vv}{\jap{x}^\delta} \cdot \jap{x}^\delta C^T \vv
    \le \norm{\frac{\dx \vv}{\jap{x}^\delta}}_{L^2_x}\norm{\jap{x}^\delta C^T(t)}_{\linf}\norm{\vv}_{L^2}\\
    \le \norm{\frac{\dx \vv}{\jap{x}^\delta}}_{L^2_x}^2+A\norm{\vv}_{L^2}^2
\end{align*}
where $C^T_{ij}=C_{ji}$.\\ Finally, by \eqref{eq:B}
\begin{align*}
  I_1 = \int \jap{x}^{2\delta}B\frac{\dx \vv}{\jap{x}^\delta}\cdot \frac{\dx \vv}{\jap{x}^\delta} dx
  \le - 2\int  \frac{\abs{\dx \vv}^2}{\jap{x}^{2\delta}} dx
\end{align*}
Summing together the estimates of $I_1$ -- $I_6$ we complete the proof of the Proposition \ref{apri:dt} and hence the Theorem \ref{lem:apri}
 \end{proof}

\section{Wellposedness of the regularized problem.}\label{sec:parab}
 By Duhamel principle, solving \eqref{IVPe} in a sufficiently regular Sobolev space is equivalent to a fixed point of the operator
 \begin{align}\label{IVPe:Duh}
  \Gamma\vue \equiv e^{-\ep t\dx^4}\vec{u}_0 + \int_{0}^t e^{-\ep (t-t')\dx^4}(-N_{\vu^\ep}\vu^\ep+\vf)(t')dt'
 \end{align}
 where the parabolic operator $e^{-\ep t\dx^4}$ is defined as a Fourier multiplier:
    \begin{align*}
\widehat{e^{-\ep t\dx^4}\vec{u}_0}(\xi) = e^{-\ep t \xi^4}\widehat{\vu}_0(\xi)
    \end{align*}
    Well-posedness of \eqref{IVPe} for a short time dependent on the parameter $\ep$ is very similar to \cite{KPV2004}, where the same regularization is used for a quasi-linear {\Schr} equation. However, as \eqref{eq} is of higher order than {\Schr}, we provide the proof in the Proposition \ref{regul}.
    \subsection{Preliminary estimates}
    \begin{lemma}
      \label{lem:par} For any positive integer $s$, $0<T\le 1$ and $t\in I=[0,T]$ the following estimates hold
      \begin{subequations}\label{est:par}
   \begin{align}
        & \norm{ e^{-\ep t\dx^4}\vec{u}_0}_{C^0_IH^{s,2}}\le C(s) \norm{\vu_0}_{H^{s,2}}
        \label{est:par1}\\
        & \norm{\int_0^t e^{-\ep (t-t')\dx^4}\,\vf(t') dt'}_{C^0_IH^{s,2}} \le C(s) \left(T+\sqrt[4]{\frac{T}{\ep^3}}\right)\norm{f}_{ C^0_I H^{s-3,2}}\label{est:parg}
   \end{align}
    \end{subequations}
    \end{lemma}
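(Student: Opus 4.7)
My plan is to combine three elementary facts about $P_t \equiv e^{-\ep t\dx^4}$, viewed as a Fourier multiplier with symbol $m(\xi,t)=e^{-\ep t\xi^4}$ of modulus bounded by $1$: (i) $P_t$ commutes with every power of $\dx$ and is a contraction on $L^2$; (ii) the identity $[x,P_t] = 4\ep t\, P_t \dx^3$, obtained by differentiating the symbol in $\xi$; and (iii) the smoothing bound $\norm{\dx^3 P_\tau w}_{L^2} \le C(\ep\tau)^{-3/4}\norm{w}_{L^2}$, which follows from $\sup_\xi |\xi|^3 e^{-\ep\tau\xi^4} \les (\ep\tau)^{-3/4}$. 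Using the equivalent form of $\norm{\cdot}_{H^{s,2}}$ in \eqref{def:norms}, it suffices to control, for $j\in\{0,1,2\}$ and $0\le \alpha \le s+3j$, the quantities $\norm{\jap{x}^{2-j}\dx^\alpha \cdot}_{L^2}$.

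For \eqref{est:par1}, I would commute $\dx^\alpha$ past $P_t$ freely and write
\[ \jap{x}^{2-j} P_t(\dx^\alpha u_0) = P_t\bigl(\jap{x}^{2-j}\dx^\alpha u_0\bigr) + [\jap{x}^{2-j}, P_t]\,\dx^\alpha u_0. \]
The first summand is handled by $L^2$-contraction. For the commutator, iterating (ii) (with equivalent polynomial weights $x^k$ in place of $\jap{x}^k$ for $2-j\in\{0,1,2\}$) expresses $[x^{2-j}, P_t]$ as a finite sum of terms of the form $(\ep t)^l P_t\, x^a \dx^b$ with $l\ge 1$, $a+l\le 2-j$, and $b\le 3l$. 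Since $t\le 1$ each power $(\ep t)^l$ is harmless; more importantly, each unit of weight dropped is compensated by three extra derivatives, precisely the $1\!:\!3$ scaling built into the definition of $H^{s,2}$. So every commutator term lands in an allowed $(j',\alpha')$ component of the same $H^{s,2}$ norm, and summing gives \eqref{est:par1}.

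For \eqref{est:parg}, the plan is to pull the norm and $\dx^\alpha$ inside the Duhamel integral,
\[ \norm{\jap{x}^{2-j}\dx^\alpha \!\!\int_0^t \!\!P_{t-t'}\vf(t')\,dt'}_{L^2} \!\!\le \int_0^t \!\!\norm{\jap{x}^{2-j} P_{t-t'} \dx^\alpha \vf(t')}_{L^2}\,dt', \]
and to apply the weight-commutator argument above to reduce the integrand to $L^2$ norms of expressions $\dx^{\alpha'} P_{t-t'}(\text{component of }\vf)$ controlled by $\norm{\vf}_{C^0_I H^{s-3,2}}$. When $\alpha\ge 3$ I would split $\dx^\alpha = \dx^3\dx^{\alpha-3}$, use (iii) to trade three derivatives for the singular factor $(\ep(t-t'))^{-3/4}$, and observe that $\alpha-3\le (s-3)+3j$ matches the derivative budget of the appropriate $H^{s-3,2}$ component; the $t'$-integral of $(\ep(t-t'))^{-3/4}$ over $[0,T]$ is $\les\sqrt[4]{T/\ep^3}$. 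When $\alpha<3$ no smoothing is needed: $L^2$-contraction of $P_{t-t'}$ together with the length of the time interval contributes the $T$ factor, after enlarging $j$ to $j'=2$ (no weight) if the raw derivative count on $\vf$ is too tight. Summing over $j,\alpha$ yields \eqref{est:parg}.

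The hard part will be the bookkeeping: verifying that every weight commutator produced by (ii) and every derivative gain from (iii) falls into an allowed component of $H^{s,2}$ or $H^{s-3,2}$. This is not deep, but it explains both the choice of weight exponent $2$ and the derivative index shift $3$ in the definition of $H^{s,2}$, and the appearance of $\sqrt[4]{T/\ep^3}$ in \eqref{est:parg}, which is dictated by the integrated singularity of the order-$3$ smoothing estimate.
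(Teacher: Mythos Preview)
Your approach is correct, and the route differs modestly from the paper's. The paper treats the weight by a PDE device: it regards $\jap{x}^2 e^{-\ep t\dx^4}\vu_0$ as the solution of a perturbed parabolic problem with inhomogeneity $E_3(\jap{x}\vv)$, where $E_3=[\ep\dx^4,\jap{x}^2]\jap{x}^{-1}$ is third order, and then applies Duhamel together with the interpolation inequality \eqref{wt:int}, $\norm{\jap{x}\vv}_{H^{s+3}}\les\norm{\jap{x}^2\vv}_{H^s}^{1/2}\norm{\vv}_{H^{s+6}}^{1/2}$, to close. You instead iterate the explicit propagator identity $[x,P_t]=4\ep t\,P_t\dx^3$ and track indices. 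Your argument is more elementary (no interpolation lemma needed) and makes the $1{:}3$ weight--derivative exchange built into $H^{s,2}$ completely transparent; the paper's version is terser once \eqref{wt:int} is in hand, and the same Duhamel-with-commutator trick is reused verbatim later (e.g.\ in \eqref{comm:par} and \eqref{eq:wt}), so it amortises. For \eqref{est:parg} both arguments rest on the same symbol bound $\sup_\xi|\xi|^3 e^{-\ep\tau\xi^4}\les(\ep\tau)^{-3/4}$.

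One point to make explicit in your write-up: in the Duhamel estimate the weight-commutator terms $(\ep(t-t'))^l P_{t-t'}x^a\dx^{b+\alpha}\vf$ can carry up to three derivatives beyond the $H^{s-3,2}$ budget (the count $b+\alpha\le s+3j+3l$ against the allowance $(s-3)+3(2-a)$ leaves a gap of exactly $3$ in the worst case), so these terms also need one application of the smoothing bound (iii), not just the leading term. After commuting $\dx^3$ past $x^a$ (which only produces strictly better lower-order pieces) the prefactor $(\ep(t-t'))^{l}$ with $l\ge 1$ overcompensates the $(\ep(t-t'))^{-3/4}$ singularity and the time integral contributes a harmless $O(T)$. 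You flag this under ``bookkeeping'', but it is the one place where the argument is not purely combinatorial.
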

    By changing the $Y$ norm in \eqref{eq:data} by a multiplicative constant, for the rest of the paper we treat the constant $C(8)$ as $1$ from the Lemma \ref{lem:par} for simplicity.
    \begin{proof}
      By Plancherel and boundedness of $e^{-\alpha}$ for $\alpha\ge 0$:
      \begin{align}\label{est:par1a}
      \norm{e^{-\ep t\dx^4}\vec{u}_0}_{H^s}^2 = \int \jap{\xi}^{2s} e^{-2\ep t\xi^4}\abs{\widehat{\vec{u}_0}(\xi)}^2 d\xi \le \norm{\vu_0}_{H^s}^2
      \end{align}
      To proceed with weights, note that multiplying the PDE for $\vv = e^{-\ep t\dx^4}\vec{u}_0$ by $\jap{x}^2$ and commuting derivatives with weights, we get
      \begin{align*}
      \begin{cases}
        \dt \jap{x}^2 \vv = - \ep \dx^4 (\jap{x}^2\vv) +  E_3 (\jap{x}\vv)\\
        \jap{x}^2\vv(x,0) = \jap{x}^2\vu_0(x)
      \end{cases}
      \end{align*}
      Where $E_3=[\ep \dx^4,\jap{x}^2]\jap{x}^{-1}$ is a differential operator of order $3$ with $\B^\infty$ coefficients. Hence by the Duhamel formula
      \begin{align}\label{comm:par}
        \jap{x}^2e^{-\ep t\dx^4}\vec{u}_0 = e^{-\ep t\dx^4}\jap{x}^2\vec{u}_0 + \int_0^t e^{-\ep (t-t')\dx^4}E_3 \jap{x} \vv dt'
      \end{align}
      Therefore, by \eqref{est:par1a} and Minkowski inequality
      \begin{align*}
        \norm{\jap{x}^2e^{-\ep t\dx^4}\vec{u}_0}_{H^s} \le \norm{\jap{x}^2\vu_0}_{H^s} + \int_0^t \norm{e^{-\ep (t-t')\dx^4}E_3 \jap{x} \vv}_{H^s} dt'\\
\le        \norm{\jap{x}^2\vu_0}_{H^s} + t\norm{E_3\jap{x}\vv}_{H^s}
      \end{align*}
      By the boundedness of (Pseudo)Differential operators, $\norm{E_3 \jap{x}\vv}_{H^s} \les_s \norm{\jap{x}\vv}_{H^{s+3}}$.
      By Pseudo-Differential calculus (or commuting $\jap{x}$ with derivatives by hand) and Cauchy-Schwarz
      \begin{equation}\label{wt:int}
\norm{\jap{x}\vv}_{H^{s+3}} = \int \jap{x}\jap{\dx}^s\jap{x}\vv\cdot \jap{x}^{-1}\jap{\dx}^{s+6}\jap{x}\jap{\dx}^s\vv dx  \les_s \norm{\jap{x}^2 f}_{H^s}^{\frac{1}{2}}\norm{ f}_{H^{s+6}}^{\frac{1}{2}}
\end{equation}
Thus by \eqref{def:norms}
\begin{align*}
  \norm{e^{-\ep t\dx^4}\vec{u}_0}_{H^{s,2}} = \norm{\vv}_{H^{s,2}} \les_s \norm{\vu_0}_{H^{s,2}}
\end{align*}
which is \eqref{est:par1}.\\

Using elementary Calculus estimate $\alpha^{\frac{3}{2}} e^{-\alpha} \le \frac{1}{e}$ for $\alpha \ge 0$ and the explicit formula for the semigroup $e^{-\ep t\dx^4}$, $t\ge 0$ we get
\begin{align*}
  \norm{\int_0^t e^{-\ep (t-t')\dx^4}\,\vf(t') dt'}_{C^0_IH^{s}} \les_s \left(T+\sqrt[4]{\frac{T}{\ep^3}}\right) \norm{f}_{ C^0_I H^{s-3}}
\end{align*}
Using this estimate instead of \eqref{est:par1a} with weights finishes the proof.
    \end{proof}
        We also need the following estimate for $N(\vu)\equiv N_{\vu}(\vu)$ from \eqref{Nu}:
    \begin{lemma}[Moser estimate]\label{lem:Mos}
            Let $u$, $v \in C^0_I H^{s,2}$ for $s=8$ or $9$ be given with
            \[\max\{\norm{\vu}_{C^0_I H^{8,2}},\norm{\vv}_{C^0_I H^{8,2}}\}\le M\]
             Then
    \begin{subequations}\label{est:nl}
    \begin{align}
       & \norm{N(\vu)}_{ C^0_I H^{s-3,2}} \le C_{s+3,M}(1+M^{s+3}) \norm{\vu}_{C^0_IH^{s,2}}\label{est:nl1}\\
        & \norm{N(\vu)-N(\vv)}_{ C^0_I H^{s-3,2}} \le C_{s+3,M}(1+M^{s+3}) \norm{\vu-\vv}_{C^0_IH^{s,2 }}\label{est:nld}
    \end{align}
    \end{subequations}
    \end{lemma}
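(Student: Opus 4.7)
The plan is to decompose $N(\vu)$ into its four terms $a\dx^3\vu$, $b\dx^2\vu$, $c\dx\vu$, and $d\vu$, to expand the $H^{s-3,2}$ norm via \eqref{def:norms} into its three pieces $\norm{\jap{x}^{2-j}(\cdot)}_{H^{s-3+3j}}$ for $j=0,1,2$, and to reduce each resulting term to a product estimate that I then close with a Fa\`a di Bruno expansion of the coefficients. The structural observation that makes this work is that the norms $H^{s,2}$ and $H^{s-3,2}$ are matched so that $\dx^\ell\vu\in H^{s-3,2}$ whenever $\vu\in H^{s,2}$ for any $\ell\le 3$; the weight $\jap{x}^{2-j}$ together with $\dx^\ell$ is absorbed precisely by the $j$-th piece of $\norm{\vu}_{H^{s,2}}$, leaving no slack at $\ell=3$ but room to spare at lower order.

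For the bound \eqref{est:nl1}, the generic quantity is $\norm{\jap{x}^{2-j}\,h\,\dx^\ell\vu}_{H^{s-3+3j}}$ with $h\in\{a,b,c,d\}$ and $\ell\in\{0,1,2,3\}$. I would apply the Moser product rule
\[
\norm{fg}_{H^k}\lesssim\norm{f}_{\linf}\norm{g}_{H^k}+\norm{f}_{W^{k,\infty}}\norm{g}_{L^2},
\]
moving $\jap{x}^{2-j}$ past $\dx$ at the cost of commutators that carry strictly less weight and therefore fit back into the same scheme. This reduces matters to bounding $\norm{h}_{W^{k,\infty}}$ for $k\le s+3$. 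For this I would write $\dx^k h(x,t,\vu,\dx\vu,\dx^2\vu)$ via Fa\`a di Bruno as a sum of terms of the form $(\dx^\beta\dz^\gamma h)\prod_i\dx^{\alpha_i+2}\vu$ with $\beta+\sum_i\alpha_i\le k$ and $|\gamma|\le k$. The prefactor is bounded by $C_{s+3,M}$ via assumption (NL2), while each factor $\dx^{\alpha_i+2}\vu$ is bounded in $\linf$ by $M$ through the Sobolev embedding $H^{14}\hookrightarrow\linf$ applied to $\vu\in H^{8,2}$. At most $s+3$ such factors appear per multi-index, which yields the advertised constant $C_{s+3,M}(1+M^{s+3})$.

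The difference estimate \eqref{est:nld} I would derive by telescoping each summand as
\[
h(\vu)\dx^\ell\vu-h(\vv)\dx^\ell\vv=[h(\vu)-h(\vv)]\dx^\ell\vu+h(\vv)\dx^\ell(\vu-\vv),
\]
and expanding the coefficient difference via the fundamental theorem of calculus,
\[
h(\vu)-h(\vv)=\int_0^1\dz h(x,t,\vw_\theta)\,d\theta\cdot\bigl(\vu-\vv,\dx(\vu-\vv),\dx^2(\vu-\vv)\bigr),
\]
with $\vw_\theta$ the linear interpolation between $(\vu,\dx\vu,\dx^2\vu)$ and $(\vv,\dx\vv,\dx^2\vv)$. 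Each piece then falls under the Moser plus Fa\`a di Bruno machinery of the previous step, with one factor now being $\vu-\vv$ (or one of its first two derivatives) measured in $H^{s,2}$.

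The main obstacle is the combinatorial accounting inside Fa\`a di Bruno in the presence of the weight: for each multi-index one must decide which factor $\dx^{\alpha_i+2}\vu$ carries the $L^2$ norm, how $\jap{x}^{2-j}$ is distributed among the factors, and verify that the number of derivatives landing on $\vu$ never exceeds $s+6$, the top Sobolev index in $\norm{\vu}_{H^{s,2}}$. Once this bookkeeping is arranged, the four terms of $N$ and the three weight levels $j=0,1,2$ collapse to a finite, routine sum that yields both \eqref{est:nl1} and \eqref{est:nld}.
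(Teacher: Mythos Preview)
Your approach is the same as the paper's --- the paper's proof is literally two sentences saying ``elementary calculus and Sobolev embedding'' --- and your expansion via Leibniz/Fa\`a di Bruno plus Sobolev is the correct way to flesh it out.

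There is, however, one point where your writeup as stated does not close, and it is precisely the content of the paper's only substantive remark. You propose to apply the product estimate with $f=h$ (the coefficient) and $g=\dx^\ell\vu$, and then bound $\norm{h}_{W^{k,\infty}}$ for $k\le s+3$ by $C_{s+3,M}(1+M^{s+3})$, claiming each factor $\dx^{\alpha_i+2}\vu$ sits in $\linf$ via $H^{14}\hookrightarrow\linf$ and the bound $\norm{\vu}_{H^{8,2}}\le M$. For $s=8$ this is fine. For $s=9$ it fails: taking $k=12$, the expansion of $\dx^{12}[a(x,t,\vu,\dx\vu,\dx^2\vu)]$ contains the term $\partial_{\vz^2}a\cdot\dx^{14}\vu$, and $\norm{\dx^{14}\vu}_{\linf}$ requires control of $\norm{\vu}_{H^{15}}$, which is part of $\norm{\vu}_{H^{9,2}}$ and is \emph{not} bounded by $M$. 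So the two-term Moser product rule with this splitting does not give a bound linear in $\norm{\vu}_{H^{9,2}}$.

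The fix is exactly what your ``main obstacle'' paragraph alludes to but your main argument does not implement: after the full Leibniz/Fa\`a di Bruno expansion, in each monomial you must place the single factor carrying the most derivatives in $L^2$ (bounded by $\norm{\vu}_{H^{s,2}}$) and all remaining factors in $\linf$ (bounded by $M$). The paper's remark that ``it is impossible to get terms like $\dx^{s+5}\vu\cdot\dx^{s+6}\vu$ by differentiating $N(\vu)$ $s+3$ times'' is exactly the counting that guarantees this works: the total number of derivatives of $\vu$ in any monomial is at most $s+8$, so at most one factor can carry $\ge s+5$ derivatives, and every other factor has at most $s+4\le 13$ derivatives and hence lies in $\linf$ via $H^{14}\hookrightarrow\linf$ and the $H^{8,2}$ bound $M$. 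Once you route the argument this way rather than through the crude two-term product estimate, both cases $s=8,9$ go through with the stated constant.
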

\begin{proof}
The proposition follows by the elementary calculus and the Sobolev embedding. The constants in the $s=9$ case depend on $M$, because it is impossible to get terms like $\dx^{s+5}\vu\cdot \dx^{s+6}\vu$ by differentiating $N(\vu)$ $s+3$ times.
\end{proof}
      \subsection{Short time well-posedness}
 We now set up the following notation:
 \begin{equation}\label{M}
  \begin{split}
     M = (16 A +2)R \text{ and } T_\ep =  \min \left\{\frac{\ep^3}{[4C_{11,M}(1+M^{11})]^4}, \frac{1}{2}\right\}
  \end{split}
\end{equation}
 with $R$ from \eqref{eq:data} and $A$ from the Theorem \ref{thm:unif}.

 We do the contraction argument in the following closed subset of $C^0_IH^{8,2}$ for $I=[0,T]$. 
\begin{equation}
  \label{XT}
    X_{I}^M = \{ \vu \in C^0_{I}H^{8,2}: \vu(x,0)=\vu_0(x),\, \norm{\vu}_{C^0_{I}H^{8,2}} \le M, \norm{\dt \vu}_{C^0_{I}H^{4,2}}\le C(M) \}
\end{equation}
with $\norm{\vu-\vv}_{X^M_I} = \norm{\vu-\vv}_{C^0_I H^{8,2}}$ and
\begin{align*}
      C(M) = \sup_{\norm{\vu}_{C^0_{I}H^{8,2}} \le M}\norm{\vf -N_{\vu}(\vu)}_{C^0_IH^{4,2}}+M
\end{align*}
Where by \eqref{eq:data}, \eqref{est:nl1} and \eqref{M} $C(M)$ is locally bounded and increasing in $M$.
    \begin{proposition}
      \label{regul}
      $\Gamma$ from \eqref{IVPe:Duh} is a contraction map on $X^M_{I_\ep}$ with $I_\ep = [0,T_\ep]$ and hence \eqref{IVPe} has a unique solution $\vue$ in $X^M_{I_\ep}$.
       \end{proposition}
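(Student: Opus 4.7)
The plan is to apply the Banach fixed point theorem to the Duhamel map $\Gamma$ on the metric space $X^M_{I_\ep}$ (a closed subset of $C^0_{I_\ep}H^{8,2}$, hence complete with the induced norm). I need to verify two properties: (i) $\Gamma$ sends $X^M_{I_\ep}$ into itself and (ii) $\Gamma$ is a strict contraction, say with constant $\le \tfrac{1}{2}$, in the $C^0_{I_\ep}H^{8,2}$ metric. Both will hinge on the arithmetic that by the choice of $T_\ep$ in \eqref{M} and using $\ep\le 1$,
\[
\left(T_\ep+\sqrt[4]{T_\ep/\ep^3}\right)C_{11,M}(1+M^{11})\le \tfrac{1}{2},
\]
since each summand on the left is $\le \tfrac{1}{4}$: for the second this is $T_\ep\le \ep^3/[4C_{11,M}(1+M^{11})]^4$ directly, and for the first $T_\ep$ is no larger than the same quantity, which (using $\ep\le 1$) is bounded by $1/[4C_{11,M}(1+M^{11})]$.

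For the self-mapping I split the Duhamel formula into three pieces. The homogeneous piece $e^{-\ep t\dx^4}\vu_0$ has $C^0_{I_\ep}H^{8,2}$-norm $\le R$ by \eqref{est:par1} (normalizing $C(8)=1$). The forcing piece is handled by plugging \eqref{est:par1} into the integrand and applying Minkowski's inequality to obtain the bound $\norm{\vf}_{L^1_{I_\ep}H^{8,2}}\le R$. The nonlinear piece is controlled by \eqref{est:parg} with $s=8$ composed with the Moser bound \eqref{est:nl1}, yielding
\[
\norm{\int_0^t e^{-\ep(t-t')\dx^4}N(\vu)(t')\,dt'}_{C^0_{I_\ep}H^{8,2}}\le \left(T_\ep+\sqrt[4]{T_\ep/\ep^3}\right)C_{11,M}(1+M^{11})M\le \tfrac{M}{2}.
\]
Adding the three contributions gives $\norm{\Gamma\vu}_{C^0_{I_\ep}H^{8,2}}\le 2R+\tfrac{M}{2}\le M$, where the last inequality uses $M=(16A+2)R\ge 4R$. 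The initial condition $\Gamma\vu(\cdot,0)=\vu_0$ is built into the formula. Finally, the time-derivative constraint follows from the pointwise identity $\dt\Gamma\vu=-\ep\dx^4\Gamma\vu-N(\vu)+\vf$: the first term is bounded in $C^0_{I_\ep}H^{4,2}$ by $\norm{\Gamma\vu}_{C^0_{I_\ep}H^{8,2}}\le M$, which is precisely the extra $M$ built into the definition of $C(M)$, while the remaining terms are absorbed by the supremum in the definition of $C(M)$.

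For the contraction, the homogeneous and forcing contributions cancel, leaving
\[
\Gamma\vu-\Gamma\vv=\int_0^t e^{-\ep(t-t')\dx^4}\bigl(N(\vv)-N(\vu)\bigr)(t')\,dt'.
\]
Applying \eqref{est:parg} with $s=8$ together with the difference Moser bound \eqref{est:nld} gives
\[
\norm{\Gamma\vu-\Gamma\vv}_{C^0_{I_\ep}H^{8,2}}\le \left(T_\ep+\sqrt[4]{T_\ep/\ep^3}\right)C_{11,M}(1+M^{11})\,\norm{\vu-\vv}_{C^0_{I_\ep}H^{8,2}}\le \tfrac{1}{2}\norm{\vu-\vv}_{C^0_{I_\ep}H^{8,2}}
\]
by the same arithmetic as above. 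The Banach fixed point theorem then delivers a unique $\vue\in X^M_{I_\ep}$ with $\vue=\Gamma\vue$, which is the desired solution of \eqref{IVPe}.

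There is no serious obstacle here, since this is a standard Picard iteration; the one structural feature worth flagging is that the $\sqrt[4]{T/\ep^3}$ loss in \eqref{est:parg} forces $T_\ep=O(\ep^3)$, so this proposition alone cannot be used to pass to the limit $\ep\to 0$. Extracting an $\ep$-independent existence time is precisely what the dispersive estimate of Theorem \ref{lem:apri} accomplishes in the next step, bypassing the crude parabolic bound \eqref{est:parg}.
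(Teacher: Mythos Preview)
Your proof is correct and follows essentially the same route as the paper: bound the linear pieces of the Duhamel formula by \eqref{est:par1} with Minkowski, the nonlinear piece by \eqref{est:parg} combined with the Moser estimate \eqref{est:nl1}, verify the $\dt$-constraint via the equation, and get the contraction from \eqref{est:nld}. Your bookkeeping of the data terms as $2R$ rather than the paper's $M/2$ is slightly looser but harmless since $M=(16A+2)R\ge 4R$, and your explicit verification of the arithmetic behind $T_\ep$ and the closing remark on the $O(\ep^3)$ lifespan are useful additions.
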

     We note, that \eqref{IVPe:Duh} is locally well-posed in $C^0_tH^4$ using the same proof. However, for the arguments in section \ref{secuni} and beyond we need $\vue$ to be in $X^M_{I_\ep}$ .
      \begin{proof}
Let $\vu \in X^M_{I_\ep}$. Then by Minkowski inequality and \eqref{est:par}, followed by \eqref{eq:data}, \eqref{est:nl} and \eqref{M} we get:
        \begin{align*}
          \norm{\Gamma \vu(t)}_{C^0_{[0,T_\ep]}H^{8,2}} 
           \le \norm{\vu_0}_{H^{8,2}} + \norm{\vf}_{L^1_{[0,T_\ep]}H^{8,2}}  + (T_\ep+\sqrt[4]{\frac{T_\ep}{\ep^3}}) \norm{N\vu}_{C^0_{[0,t]}H^{5,2}}\\
            \le \frac{M}{2} + (T_\ep+\sqrt[4]{\frac{T_\ep}{\ep^3}}) C_{11,M}(1+M^{11}) M \le M
        \end{align*}
       Differentiating \eqref{IVPe:Duh} in $t$ and using $\norm{\Gamma \vu(t)}_{C^0_{[0,T_\ep]}H^{8,2}}\le M$ shows that $\norm{\dt\Gamma\vu}_{C^0_I H^{4,2}} \le C(M)$ and hence $\Gamma(X^M_{I_\ep}) \subset X^M_{I_\ep}$.\\

       Likewise using Lemmas \ref{lem:par} and \ref{lem:Mos} for the difference gives the contraction property.
      \end{proof}
      \begin{corollary}\label{cont}
        As we proved Proposition \ref{regul} by the contraction mapping argument, we automatically get continuous (and Lipschitz) dependence on data. That is, the flow map $(\vu_0,\vf) \to \vu$ is continuous in $Y \to X_{I_\ep}^M$.\\
        Moreover, \eqref{IVPe:Duh} has a persistence of regularity property.
      \end{corollary}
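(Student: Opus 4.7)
The plan is to extract both assertions directly from the contraction mapping argument already carried out in Proposition \ref{regul}. For the continuous (Lipschitz) dependence, given two data pairs $(\vu_{0,1},\vf_1),(\vu_{0,2},\vf_2) \in Y$ in the ball of radius $R$ and their associated solutions $\vue_1,\vue_2 \in X^M_{I_\ep}$, I would subtract the two Duhamel formulas to get
\[
\vue_1 - \vue_2 = e^{-\ep t\dx^4}(\vu_{0,1}-\vu_{0,2}) + \int_0^t e^{-\ep(t-t')\dx^4}\bigl[(\vf_1-\vf_2) - (N(\vue_1)-N(\vue_2))\bigr]dt'.
\]
Applying Lemma \ref{lem:par} together with the difference Moser bound \eqref{est:nld} and the choice of $T_\ep$ in \eqref{M} makes the $N$-term on the right-hand side a small multiple of $\norm{\vue_1-\vue_2}_{X^M_{I_\ep}}$, which can be absorbed on the left to yield
\[
\norm{\vue_1-\vue_2}_{X^M_{I_\ep}} \les_M \norm{(\vu_{0,1}-\vu_{0,2},\vf_1-\vf_2)}_Y.
\]
This is exactly the standard Banach-fixed-point argument: the parametrized map $\Gamma=\Gamma_{\vu_0,\vf}$ depends Lipschitz-continuously on its parameters, so its unique fixed point does as well.

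For persistence of regularity with $s>8$, the plan is to rerun the contraction in a higher-regularity subset while inheriting the $H^{8,2}$ contraction metric. First I would verify the $H^{s,2}$-analogue of Lemma \ref{lem:Mos},
\[
\norm{N(\vu)}_{C^0_I H^{s-3,2}} \le C_{s+3,M_s}(1+M_s^{s+3})\norm{\vu}_{C^0_I H^{s,2}},
\]
which follows from the same Leibniz/Sobolev-embedding computation since all low-order factors continue to be controlled by $H^{8,2}\hookrightarrow \linf$. With $M_s=2(\norm{\vu_0}_{H^{s,2}}+\norm{\vf}_{L^1_IH^{s,2}})$, I would consider the closed subset
\[
X^{M,s}_{I_\ep} = X^M_{I_\ep} \cap \bigl\{\vu:\, \norm{\vu}_{C^0_I H^{s,2}}\le M_s\bigr\}
\]
equipped with the $C^0_I H^{8,2}$ distance. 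Lemma \ref{lem:par} plus the higher Moser bound show $\Gamma(X^{M,s}_{I_\ep})\subset X^{M,s}_{I_\ep}$, possibly after shrinking $T_\ep$ further to absorb the $s$-dependent constants in the manner of \eqref{M}. The contraction estimate in the $H^{8,2}$ metric is unchanged, so $\Gamma$ has a unique fixed point in $X^{M,s}_{I_\ep}$; by uniqueness in $X^M_{I_\ep}$ this fixed point must coincide with $\vue$, giving $\vue \in C^0_I H^{s,2}$.

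The main obstacle I expect is the bookkeeping for how $T_\ep$ depends on $s$ and $M_s$: one needs the additional shrinking of $T_\ep$ required by the higher-order Moser estimate not to shorten the interval below the original $T_\ep$ of Proposition \ref{regul}. Since the binding constraint in \eqref{M} is the $\ep^3$-factor from the parabolic estimate \eqref{est:parg} rather than the Moser constants, this is handled by first solving at low regularity on $[0,T_\ep]$, then running the higher-regularity contraction on possibly a shorter subinterval and iterating forward in finitely many steps using the $H^{8,2}$ continuation criterion implicit in $X^M_{I_\ep}$.
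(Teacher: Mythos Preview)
Your treatment of continuous dependence is exactly the standard Banach-fixed-point argument the paper has in mind by ``automatic.'' For persistence of regularity your route differs from the paper's in two respects. First, the paper only redoes the \emph{boundedness} half of Proposition~\ref{regul} (that $\Gamma$ maps the $H^{s,2}$-ball to itself) rather than a full contraction in a mixed-metric set; this sidesteps the subtlety that your $X^{M,s}_{I_\ep}$ is not closed in the $C^0_I H^{8,2}$ metric (a sequence bounded in $C^0_I H^{s,2}$ and Cauchy in $C^0_I H^{8,2}$ has its limit a priori only in $L^\infty_I H^{s,2}$), though you can patch this by working in $L^\infty_I H^{s,2}$ and recovering strong time-continuity a posteriori from $\vue=\Gamma(\vue)$. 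Second, the paper proceeds \emph{inductively in $s$}, going from $H^{8,2}$ to $H^{9,2}$ to $H^{10,2}$ and so on, with the subinterval length $\tld T_s$ at each step depending on the $H^{s-1,2}$ bound $\tld M_s$ just established; your direct jump to $H^{s,2}$ requires the Moser constant in the $H^{s,2}$-analogue of \eqref{est:nl1} to depend only on the $H^{8,2}$ norm, which is true via Gagliardo--Nirenberg interpolation but not via the pure Leibniz/Sobolev-embedding computation you invoke once $s$ is large enough for two factors in the expansion each to carry more than $13$ derivatives of $\vu$. The paper's stepwise induction in $s$ avoids that issue at the cost of a slightly longer bootstrap; your direct approach is cleaner but needs the sharper product estimate.
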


For the persistence of regularity, let the data $(\vu_0,\vf)$ in addition to satisfying \eqref{eq:data} satisfy for some $s>8$
\begin{align*}
  \norm{\vu_0}_{H^{s,2}}+\norm{\vf}_{L^1_{[-1,1]}H^{s,2}}+\norm{\vf}_{C^0_{[-1,1]}H^{s-4,2}} < \infty
\end{align*}
 then the solution of \eqref{IVPe:Duh} $\vue \in X^M_{T'}$ satisfies $\norm{\vue}_{C^0_{[0,T']}H^{s,2}\cap C^1_{[0,T']} H^{s-4,2}}  <\infty$. To see this, redo the boundedness argument in the proof of the Proposition \ref{regul} for $H^{9,2}$ partitioning $[0,T]$ into identical intervals of length $\tld T_9= \tld T(\ep,M,C_{12,M})$ for using \eqref{est:nl1}. For higher norms proceed inductively redoing \eqref{est:nl1} to estimate $\norm{\vue}_{C^0_{[0,T]}H^{s,2}}$ using intervals of length $\tld T_s =\tld T(\ep,\tld M_s, C_{s+3,\tld M_s})$ for $\tld M_s = \norm{\vue}_{C^0_{[0,T]}H^{s-1,2}}$.
  \renewcommand{\theenumi}{(\arabic{enumi})}
\renewcommand{\labelenumi}{\theenumi}
 \subsection{Adapting a linear estimate}\label{secuni}
 We now aim to adapt the Theorem \ref{lem:apri}, in order to estimate $\vu^\ep$ uniformly in $\ep$. As the Theorem \ref{lem:apri} applies to linear equations, we fix the coefficients \eqref{eq} at an arbitrary $\vv \in X^M_{T'}$ to get
\begin{align*}
\begin{cases}
  \dt \vu + N_{\vv} \vu = -\ep \dx^4 \vu + \vf\\
  \vu(x,0) = \vu_0(x)
\end{cases}
\end{align*}
with $N_{\vv}$ from \eqref{Nu}. We will show in Lemma \ref{coeff} that the coefficient bounds (L1)-(L3) for this linear equation do not depend on the particular $\vv$, but only on the (NL1)-(NL3) bounds, data for $t=0$ and the bounds $M$, $C(M)$ for $t\neq 0$.\\

In particular an application of Theorem \ref{lem:apri} for $\vu=\vv=\vue$ would imply for $T'\le T$
\begin{align}\label{unif:L2}
  \norm{\vue}_{\linf_I L^2_x} \le A(\norm{\vu_0}_{L^2} + \norm{\vf}_{L^1_I L^2_x})
\end{align}

We then pursue the same strategy for $\dx^{s}\vue$ and $\jap{x}^2 \dx^{s-6}\vue$ for $s=6,\ldots 14$. Namely, we differentiate \eqref{eq} $s$ times and account for quasi-linear interactions.
 \begin{equation}\label{eq:Hs}
 \left\{
   \begin{array}{c}
     \dt \dx^{s}\vu^{\ep} + a\left(x,t,(\dx^\alpha \vue)_{\alpha\le 2}\right)\cdot\dx^{{s}+3}\vu^{\ep}  + {b}^s\left(x,t,(\dx^\alpha \vue)_{\alpha\le 3}\right)\cdot\dx^{{s}+2}\vu^{\ep}\\ + {c}^s\left(x,t,(\dx^\alpha \vue)_{\alpha\le 4}\right)\cdot\dx^{{s}+1}\vu^{\ep} + {d}^s\left(x,t,(\dx^\alpha \vue)_{\alpha\le 5}\right) \dx^{s}\vu^{\ep}= - \ep\dx^{{s}+4}\vu^{\ep}\\
      +\dx^{s}\vf(x,t) +\vec{F}^s\left(x,t,(\dx^\alpha \vue)_{\alpha\le s-1}\right) \text{ on } \R\times I\\
     \vu^{\ep}(x,0) = \vu_0(x) \text{ on } \R\times\{0\}
   \end{array}
 \right.
  \end{equation}
  with $a$ the same as in \eqref{eq},
  \begin{align*}
     {b}^s\left(x,t,(\dx^\alpha \vue)_{\alpha\le 3}\right) = s \dx[ a(x,t,(\dx^\alpha \vue)_{\alpha \le 2} ] + \partial_{\vz^2}a(x,t,(\dx^\alpha \vue)_{\alpha \le 2}) \cdot\dx^3 \vu^\ep\\
      + b(x,t,(\dx^\alpha \vue)_{\alpha \le 2}) + \partial_{\vz^2} b (x,t,(\dx^\alpha \vue)_{\alpha \le 2})\cdot\dx^2 \vu^\ep
  \end{align*}
where $\vz=(\vz^\alpha)_{\alpha \le 2} = (\dx^\alpha \vue)_{\alpha\le 2}$. Similarly, $ c^s$ explicitly depend on a number $s$, $(\dx^\alpha \vue)_{\alpha\le 4}$ and up to $2$ derivatives of $a$, $b$; and $d^s$ depend on $s$, $(\dx^\alpha \vue)_{\alpha\le 5}$ and up to $3$ derivatives of $a$-$c$ respectively. The reason for these formulas, is that differentiation is "linearizing", so we only can get $a\dx^{s+3}\vue$ by applying all derivatives on $\vue$ and there are only few nonlinear ways to get terms higher than order $s$.\\

Once we establish (L1)-(L3) coefficient estimates for \eqref{eq:Hs} that are uniform when evaluated at $\vv \in X^M_{T'}$ in Lemma \ref{coeff}, then for the solution $\vue$ satisfying
\begin{equation}\label{reg:apr}
  \vue \in C^0_{I}H^{8+5,2}\cap C^1_I H^{4+5,2}
\end{equation}
We would then apply Theorem \ref{lem:apri} to \eqref{eq:Hs} to get
\begin{equation}\label{unif:Hs}
  \begin{split}
    \norm{\dx^{s}\vu^\ep}_{C^0_{I'} L^2} \le A\left(\norm{\dx^{s}\vu_0}_{L^2} + \norm{\dx^{s}\vf}_{L^1_{I'} L^2}\right) + A T'\norm{\vec F^s}_{\linf_{I'} L^2}
 \end{split}
\end{equation}
Provided $T'\le T$. Note that, \eqref{reg:apr} is needed for to ensure that \eqref{eq:Hs} is valid classically for $6\le s \le 14$.\\

Finally, we multiply \eqref{eq:Hs} by $\jap{x}^2$ to rewrite it as
\begin{align}\label{eq:wt}
\begin{cases}
    & \dt( \jap{x}^2\dx^{s}\vu^{\ep}) + a\dx^3(\jap{x}^2\dx^{s}\vu^{\ep}) + {b}^s\dx^2(\jap{x}^2\dx^{s}\vu^{\ep}) + {c}^s\dx(\jap{x}^2\dx^{s}\vu^{\ep})\\
    & + {d}^s\jap{x}^2\dx^{s}\vu^{\ep} = -\ep\dx^4(\jap{x}^2\dx^{s}\vu^{\ep})+\jap{x}^2\vec{F}^s(x,t,\vu^{\ep},\ldots,\dx^{{s}-1}\vu^{\ep})\\
    & +\jap{x}^2\dx^{s}\vf(x,t) + E_3^s\jap{x}\dx^s \vu^\ep\\
    & \jap{x}^2 \dx^s\vu^{\ep}(x,0) = \jap{x}^2\dx^{s}\vu_0(x)
\end{cases}
\end{align}
where $a$, $ b^s$, $ c^s$, $ d^s$, $\vec F^s$ are identical to \eqref{eq:Hs} by construction and $E_3 = [\ep\dx^4 + a\dx^3 + b^s \dx^2 + c^s\dx, \jap{x}^2]\jap{x}^{-1}$ is an order $3$ differential operator with $\linf$ coefficients. Hence as long as the coefficient estimates are established and \eqref{reg:apr} is valid, Theorem \ref{lem:apri} implies
\begin{align}\label{unif:wt}
\begin{split}
    & \norm{\jap{x}^2\dx^{s}\vu^{\ep}}_{C^0_{I'} L^2} \le A\left(\norm{\jap{x}^2\dx^{s}\vu_0}_{L^2} + \norm{\jap{x}^2\dx^{s}\vf}_{L^1_{I'} L^2}\right)\\
    & + A T'\left(\norm{\jap{x}^2\vec{F}^s}_{L^1_{I'} L^2}+\norm{E_3\jap{x}\dx^s \vu^\ep}_{L^1_{I'} L^2}\right)
\end{split}
\end{align}
 provided $T'\le T$.\\

Therefore, establishing a uniform estimate for $\norm{\vue}_{X^M_I}$ reduces to the following:
\begin{itemize}
  \item Find uniform bounds on the coefficients to \eqref{eq} and \eqref{eq:Hs} to use the Theorem \ref{lem:apri} (note the remark after Theorem \ref{lem:apri}).
  \item Control the terms not involving data ($\vec F^s$ and $E_3^s$) by choosing $T$ small enough.
  \item Remove the extra assumption \eqref{reg:apr}.
\end{itemize}
\subsection{Uniform estimate}\label{coeff:sec}
\begin{lemma}\label{coeff}
  Let $\vu \in X^M_{T'}$ and consider the equation \eqref{eq:Hs} for $0\le s \le 14$ with coefficients evaluated at $\vu$, i.e.
  \begin{align*}
    \dt \dx^{s}\vu^{\ep} + a\left(x,t,(\dx^\alpha \vu)_{\alpha\le 2}\right)\cdot\dx^{{s}+3}\vu^{\ep}  + {b}^s\left(x,t,(\dx^\alpha \vu)_{\alpha\le 3}\right)\cdot\dx^{{s}+2}\vu^{\ep}\\ + {c}^s\left(x,t,(\dx^\alpha \vu)_{\alpha\le 4}\right)\cdot\dx^{{s}+1}\vu^{\ep} + {d}^s\left(x,t,(\dx^\alpha \vu)_{\alpha\le 5}\right) \dx^{s}\vu^{\ep}= - \ep\dx^{{s}+4}\vu^{\ep} + \tld f
  \end{align*}
  where $ b^0 = b$, etc. Then the coefficients of this equation satisfy assumptions (L1)--(L3) from the Theorem \ref{lem:apri} with parameters $\lambda = \lambda(R)$, $ \tld C_0 = \tld C_0 (s,R, C_{3,R}, C_0)$ and $C_1 = C_1(s,M, C_{3,M}, C_0, C(M))$ with $\lambda(M)$, $C_{J,M}$ and $C_0$ from (NL1)-(NL3), and C(M) from \eqref{XT}.
\end{lemma}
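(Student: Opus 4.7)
The plan is to verify each of the assumptions (L1)--(L3) separately for the composite coefficients $a(x,t,(\dx^\alpha\vu)_{\alpha\le 2})$, $b^s(x,t,(\dx^\alpha\vu)_{\alpha\le 3})$, $c^s(x,t,(\dx^\alpha\vu)_{\alpha\le 4})$, and $d^s(x,t,(\dx^\alpha\vu)_{\alpha\le 5})$, using three ingredients: the weighted Sobolev embedding of $H^{8,2}$; the chain and Leibniz rules for differentiating the compositions; and Taylor expansion of the outer (NL)-coefficients in the $\vec z$ variable around $\vec z=\vec 0$. The first ingredient converts $\norm{\vu}_{C^0_I H^{8,2}}\le M$ (and $\norm{\dt\vu}_{C^0_I H^{4,2}}\le C(M)$) into pointwise bounds such as $\abs{\dx^\alpha\vu(x,t)} \les M\jap{x}^{-2}$ for $\alpha\le 7$ (and analogous bounds with weaker weights for $\alpha\le 10,13$), together with the analogues at $t=0$ with $M$ replaced by $R$.

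For (L1), at $t=0$ the argument $(\dx^\alpha\vu_0(x))_{\alpha\le 2}$ lies in a box $[-M',M']^{3n}$ with $M'=M'(R)$ uniformly in $x$ by the Sobolev embedding above, so (NL1) gives the uniform positive-definiteness constant $\lambda(R):=\lambda_{M'}$. For (L2), the explicit formulas given after \eqref{eq:Hs} express $b^s,c^s,d^s$ as polynomial combinations of $x$- and $\vec z$-derivatives of $a,b,c,d$ of total order at most $3$, evaluated at $(x,t,(\dx^\alpha\vu)_{\alpha\le 5})$ and multiplied by explicit factors $\dx^\alpha\vu$ with $\alpha\le 5$. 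The $\B^3$--$\linf$ regularity in $x$ together with $C^1$ regularity in $t$ then follows from the chain rule, (NL2) applied with $J=3$, and the pointwise bounds on $\vu$ and $\dt\vu$. The resulting constants at $t=0$ depend only on $R$ and $C_{3,R}$, while for $t>0$ they depend on $M$, $C_{3,M}$, and $C(M)$.

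The heart of the argument is (L3). For each derivative $\partial^\gamma F$ of total order $\le 3$ of an outer coefficient $F\in\{a,b,c,d\}$, I use the Taylor identity
\[ \partial^\gamma F(x,t,\vec z) = \partial^\gamma F(x,t,\vec 0) + \Bigl(\int_0^1 \partial_{\vec z}\partial^\gamma F(x,t,\sigma\vec z)\,d\sigma\Bigr)\cdot \vec z, \]
evaluated at $\vec z=(\dx^\beta\vu)_{\beta\le k}$. The first piece inherits $\jap{x}^{-2\delta}$ or $\jap{x}^{-\delta}$ decay directly from (NL3) with constant $C_0$; the second piece has its $\partial_{\vec z}$-factor bounded by $C_{3,R}$ (respectively $C_{3,M}$) and carries a factor of $\vec z$ whose components $\dx^\beta\vu_0$ decay like $R\jap{x}^{-2}$ by the weighted Sobolev embedding, matching $\jap{x}^{-2\delta}$ since $\delta=1$. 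Assembling via Leibniz all the terms produced by $x$-differentiating the composite coefficients, each resulting term therefore inherits either an explicit $\jap{x}^{-2\delta}$ from (NL3) or a $\jap{x}^{-2}$ from at least one factor of $\dx^\beta\vu$. For the antisymmetric parts, the symmetry $a_{ij}=a_{ji}$ is preserved under $x$- and $\vec z$-differentiation, so inspection of the formulas for $b^s$, $c^s$ shows that $(b^s)^\dag$ and $(c^s)^\dag$ are built solely from $b^\dag$, $c^\dag$ and their $\vec z$-derivatives, inheriting the weaker $\jap{x}^{-\delta}$ decay from (NL3).

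The main obstacle is bookkeeping in this last step: one must ensure that the $t=0$ constant $\tld C_0$ depends only on $R$, $C_{3,R}$, and $C_0$, and not on $M$ or $C(M)$. This is essential because the linear constant $A=A(\tld C_0,\lambda,\delta)$ produced by Theorem \ref{lem:apri} will be used to control $M$ itself, and any $M$-dependence in $\tld C_0$ would destroy the argument. The Taylor-expansion strategy above is tailored precisely for this: at $t=0$ each term in the expansion involves either $C_0$ (from the $\vec z=\vec 0$ piece of (NL3)) or a factor of $\vu_0$ (or its derivatives) whose pointwise bound is controlled by $R$ and $C_{3,R}$ alone. The $C_1$-type estimates for $t>0$ then follow by the same Taylor expansion, this time estimating the remainder with $M$, $C_{3,M}$ and $C(M)$.
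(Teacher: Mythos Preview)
Your proposal is correct and follows essentially the same route as the paper: Sobolev embedding for (L1), the chain/Leibniz rule with (NL2) for (L2), and the Taylor (Fundamental Theorem of Calculus) expansion in $\vec z$ at $\vec 0$ combined with weighted Sobolev embedding for (L3), with the same careful separation of the $t=0$ constants from the $t>0$ ones. Your explicit treatment of the antisymmetric parts $(b^s)^\dag$, $(c^s)^\dag$ via the preserved symmetry of $a$ is a detail the paper only alludes to with ``similar''.
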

  \begin{proof}
 To simplify notation, let $\vv=(\vu,\dx\vu,\dx^2\vu)$. Then $\norm{\vv}_{C^0_{I'}H^{8,2}} \approx_{8,2} \norm{\vu}_{C^0_{I'}H^{8,2}}$ and likewise for $\vv_0 = \vv(x,0)$ and $\dt \vv$. Note that \eqref{eq:data} becomes $\norm{\vv_0}_{H^{8,2}} \les R$.\\

By (NL1) $a_{ij}(x,t,\vv)=a_{ji}(x,t\vv)$ and $a(x,0,\vv_0) \ge \lambda(R)$ as by the Sobolev embedding $\norm{\vv_0(x)}_{\linf_x} \les \norm{\vu_0(x)}_{H^3} \le R$. Hence $a(x,t,\vv)$ satisfies (L1). \\

To verify (L2) we use (NL2), Sobolev embedding and $\norm{\vv_0}_{H^{8,2}} \les R$:
\begin{align*}
 \sum_{j=0}^3 \norm{\dx^j \left[a(x,0,\vv_0(x)\right]}_{\linf} \le   \sum_{\gamma_0 +\cdots + \gamma_{\abs{\beta}}=0}^3 \norm{\dx^{\gamma_0}\dz^\beta a(\vv_0)\dx^{\gamma_1}\vv_0\cdots\dx^{\gamma_{\abs{\beta}}}\vv_0}_{\linf}\\
  \le C_{3,R}(1+ R^3)
\end{align*}
Proceeding analogously and using \eqref{XT}, we get
\begin{align*}
  \sum_{j=0}^3 \norm{\dt \dx^j a(x,t,\vv)}_{\linf} \le C_{3,M}(1+ M^3) (1+C(M))
\end{align*}
and likewise for $\tld b^s$ --$\tld d^s$.\\

For (L3), we compute
\begin{align*}
  & \norm{\jap{x}^2\dx \left[a(x,0,\vv_0)\right]}_{\linf} \le \norm{\jap{x}^2\left(\dx a(x,0,\vv_0)+\dz a(x,0,\vv_0)\cdot\dx \vv_{0}\right)}_{\linf}
\end{align*}
Using the Fundamental Theorem of Calculus
\[
\dx a(x,0,\vv_0) = a(x,0,\vo)+ \int_0^1 \dx\dz a(x,t,\theta\vv_0) d\theta\cdot \vv_0
\]
we get using (NL2), (NL3) and Sobolev embedding
\begin{align*}
  \norm{\jap{x}^2\dx \left[a(x,0,\vv_0)\right]}_{\linf} \le  \norm{\jap{x}^2\dx a(x,0,\vo)}_{\linf}\\
   + \sup_{\abs{\vz}\le \norm{\vv_0}_{\linf}; a_i\le 1}\norm{\dx^{\alpha_1}\dz^{\alpha_2} a(x,0,\vz)}_{\linf}\norm{\jap{x}^2\dx^{\alpha_3}\vv_0}_{\linf} \le C_0 + C_{2,R}R
\end{align*}
 Similarly,
\begin{align*}
  \norm{\jap{x}^2\dt \dx [a(x,t,\vv(x,t)]}_{\linf_{I',x}} \le C_0 + 2 C_{2,M}(1+M)(1+C(M))
\end{align*}
Verifying (L3) for $b^s$--$d^s$ is similar and involves Taylor expansion of coefficients at $\vz=0$, chain rule and Sobolev embedding.
  \end{proof}
  \begin{lemma}\label{remai}
    Let $\vec F^s$ be from \eqref{eq:Hs} and $E_3^s$ from \eqref{eq:wt} for $\vue \in X^M_{T'}$. Then for $0\le s\le 8$
    \begin{align*}
      \norm{\vec F^{s+6}(x,t,(\dx^\alpha\vue)_{\alpha <s+6})}_{\linf_IL^2} \le C(s+6) C_{s+6,M} (1+M^{s+6})\norm{\vue}_{\linf_I H^{s+6}} \\
\norm{\jap{x}^2\vec F^{s}(x,t,(\dx^\alpha\vue)_{\alpha <s})}_{\linf_IL^2} \le C(s) C_{s,M} (1+M^{s})\norm{\jap{x}^2\vue}_{\linf_I H^s}\\
\norm{E_3^s\jap{x}\dx^s\vue}_{\linf_I L^2} \le C(s) C_{s,M}(1+M^4)(\norm{\jap{x}^2\dx^s \vue}_{\linf_I L^2}+ \norm{\dx^s\vue}_{\linf_I H^6})
    \end{align*}
  \end{lemma}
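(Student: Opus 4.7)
The plan is to expand $\vec F^s$ and $E_3^s$ explicitly via Leibniz and chain rules, then bound each resulting term using H\"{o}lder's inequality, Sobolev embedding, and -- for the top-order contribution in the weighted commutator estimate -- a short integration-by-parts argument. I expect the only genuine obstacle to be controlling $\norm{\jap{x}\dx^{s+3}\vue}_{L^2}$ inside the estimate for $E_3^s\jap{x}\dx^s\vue$, since naively this term sits at the boundary of what the right-hand side of the third estimate is supposed to control.

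By construction of \eqref{eq:Hs}, the nonlinear term $\vec F^s$ is a finite linear combination of expressions of the form
\begin{align*}
\dx^{\beta}\partial^\nu_{\vz}e\bigl(x,t,\vue,\dx\vue,\dx^2\vue\bigr) \prod_{j=1}^{|\nu|}\dx^{\alpha_j + r_j}\vue,
\end{align*}
where $e\in\{a,b,c,d\}$, the indices $r_j\in\{0,1,2\}$ track which slot of $\vz=(\vue,\dx\vue,\dx^2\vue)$ is differentiated, and the combined derivative count is at most $s+\mathrm{ord}(e)$ with no individual $\alpha_j+r_j$ reaching the principal order already isolated in \eqref{eq:Hs}. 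Since $\vue\in X^M_{T'}$, the Sobolev embedding $H^1\hookrightarrow\linf$ gives $\sup_{\alpha\le 6}\norm{\dx^\alpha\vue}_{\linf}\les M$, so the coefficient factor is $\linf$-bounded by $C_{s,M}(1+M^{|\nu|})$, and it suffices to estimate the product of the $\dx^{\alpha_j+r_j}\vue$'s in $L^2$ (respectively in weighted $L^2$).

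For the first estimate of the lemma, at most one factor exceeds order $6$ in derivatives, so I place that highest-order factor in $L^2$ (controlled by $\norm{\vue}_{H^{s+6}}$) and every other factor in $\linf$ (each bounded by $M$). The second estimate is analogous: I absorb the weight $\jap{x}^2$ into the highest-order factor, while the remaining factors obey $\norm{\jap{x}^2\dx^{\alpha}\vue}_{\linf}\les M$ for $\alpha\le 5$, which is immediate from the definition \eqref{def:norms} of $H^{8,2}$.

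For the third estimate, I unpack $E_3^s = [\ep\dx^4+a\dx^3+b^s\dx^2+c^s\dx,\jap{x}^2]\jap{x}^{-1}$ term by term using the standard commutator formula. Every resulting coefficient is a product of one of $\ep,a,b^s,c^s$ with a factor $(\dx^k\jap{x}^2)\jap{x}^{-1}$, which is $\linf$-bounded for $k\ge 1$; since $a,b^s,c^s$ depend only on $(\dx^\alpha\vue)_{\alpha\le 4}$, Sobolev bounds them by $C_{s,M}(1+M^4)$. After applying Leibniz to $\dx^j(\jap{x}\dx^s\vue)$ for $j\le 3$, every resulting piece is of the shape $\jap{x}^{1-k}\dx^{s+l}\vue$ with $k+l\le 3$; those with $k\ge 1$ have bounded $\jap{x}^{1-k}$ and are dominated by $\norm{\dx^s\vue}_{H^6}$. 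The only delicate term is $\jap{x}\dx^{s+3}\vue$, which I handle by pairing it with itself and integrating by parts three times, moving the three derivatives off one copy of $\dx^{s+3}\vue$ onto the weight. Modulo lower-order pieces with $\linf$-bounded coefficients $\jap{x}^{-1}\dx^k\jap{x}^2$ that are immediately dominated by $\norm{\dx^s\vue}_{H^6}+\norm{\jap{x}^2\dx^s\vue}_{L^2}$, one arrives at
\begin{align*}
\norm{\jap{x}\dx^{s+3}\vue}_{L^2}^2 = -\int \jap{x}^2\dx^{s+6}\vue\cdot\dx^s\vue\,dx + (\text{bounded lower order}).
\end{align*}
Cauchy-Schwarz combined with $2AB\le A^2+B^2$ then bounds the principal piece by $\bigl(\norm{\jap{x}^2\dx^s\vue}_{L^2}+\norm{\dx^s\vue}_{H^6}\bigr)^2$, finishing the estimate. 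The main work is thus the integration-by-parts book-keeping for this one term, modeled on the interpolation identity \eqref{wt:int} already exploited in Lemma \ref{lem:par}.
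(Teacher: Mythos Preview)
Your approach matches the paper's: for the first two estimates the paper also simply invokes ``the construction of $\vec F^s$ and Sobolev embedding,'' and for the third the paper bounds $\norm{E_3^s\jap{x}\dx^s\vue}_{L^2}\les\norm{\jap{x}\dx^s\vue}_{H^3}$ and then cites the interpolation \eqref{wt:int}, which is exactly the integration-by-parts identity you spell out by hand.

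One small error to correct: your assertion that in $\vec F^{s+6}$ ``at most one factor exceeds order $6$ in derivatives'' is false. For example, expanding $\dx^{14}(a\dx^3\vue)$ produces the term $\partial_{\vz^2}^2 a\cdot(\dx^7\vue)^3$: two chain-rule hits on the $\vz^2$ slot give factors $\dx^{\gamma_j+2}\vue$ with $\gamma_1=\gamma_2=5$, and the remaining Leibniz factor is $\dx^{17-10}\vue=\dx^7\vue$. This term sits in $\vec F^{14}$ (no factor reaches order $14$) and has three factors of order $7$. The fix is immediate and does not change your strategy: since $\vue\in X^M_{T'}$ gives $\norm{\vue}_{\linf_I H^{14}}\le M$ via $H^{8,2}\subset H^{14}$, \emph{every} factor appearing in $\vec F^{s+6}$ (all of order $\le s+5\le 13$) is bounded in $\linf$ by $M$, so you may still place one factor in $L^2$ and all the others in $\linf$.
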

  \begin{proof}
  The first two estimates follow immediately from the construction of $\vec F^s$ and Sobolev embedding.\\

  For the second, by construction $E_3$ is a differential operator with coefficients bounded by $C(s)C_{s,M}(1+M^4)$ and hence
  \begin{align*}
    \norm{E_3^s\jap{x}\dx^s\vue}_{\linf_I L^2} \le C(s) C_{s,M}(1+M^4)\norm{\jap{x}\dx^s\vue}_{\linf_I H^3}\\
     \le C(s)C_{s,M}(1+M^4)\left( \norm{\jap{x}^2\dx^s \vue}_{\linf_I L^2}+ \norm{\dx^s\vue}_{\linf_I H^6}\right)
  \end{align*}
  Where the last line follows by interpolation in \eqref{wt:int}.
  \end{proof}
\begin{theorem}
   \label{thm:unif}
There exists a constant $A$ obtained using the Theorem \ref{lem:apri} and depending on data, more precisely $A=A(R,\lambda_R,C_0, C_{14,R})$, and\\ $T=T(M,\lambda_R,C_0, C_{14,M},C(M))$ such that for any $I\subset [0,T]$ a solution $\vue \in X^M_I$ of \eqref{IVPe} on $I$ satisfies
 \begin{equation}\label{eq:unif}
          \norm{\vue}_{C^0_{I}H^{8,2}} \le 8A R
        \end{equation}
        Moreover, we can extend $\vue$ to be in $X^M_{[0,T]}$ and solve \eqref{IVPe} on $[0,T]$.
\end{theorem}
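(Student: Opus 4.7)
The plan is to apply the linear estimate of Theorem \ref{lem:apri} to the differentiated equation \eqref{eq:Hs} for $s=0,1,\ldots,14$ and to the weighted equation \eqref{eq:wt} for $s=0,1,\ldots,8$, with all coefficients evaluated at $\vu=\vue$. I would first argue under the extra regularity assumption \eqref{reg:apr}; that assumption is removed at the end by approximating the data $(\vu_0,\vf)$ by smoother data $(\vu_0^n,\vf^n)$, invoking the persistence of regularity from Corollary \ref{cont} on each approximate problem, and passing to the limit using the continuous dependence $Y\to X^M_{I_\ep}$ of Corollary \ref{cont}.

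For each $s$, Lemma \ref{coeff} verifies the hypotheses (L1)--(L3) of the linear estimate with data-level constants $\lambda=\lambda(R)$ and $\tld C_0=\tld C_0(R,C_0,C_{s+3,R})$, and solution-level constants $C_1=C_1(M,C_0,C_{s+3,M},C(M))$. The decisive structural point is that the multiplicative constant $A$ from Theorem \ref{lem:apri} depends only on $\tld C_0$, $\lambda$, $\delta$, and in particular is independent of $M$; otherwise the bootstrap $\norm{\vue}_{H^{8,2}}\le 8AR<M=(16A+2)R$ would be circular. Applying \eqref{unif:Hs} and \eqref{unif:wt}, bounding the forcing terms $\vec F^s$ and $E_3^s$ via Lemma \ref{remai} (by quantities of the form $C_{14,M}(1+M^{14})\norm{\vue}_{C^0_{I'}H^{8,2}}\le C_{14,M}(1+M^{14})M$), summing in $s$ and invoking the equivalent norm for $H^{8,2}$ from \eqref{def:norms}, one obtains
\begin{equation*}
\norm{\vue}_{C^0_{I'}H^{8,2}}\le A\bigl(\norm{\vu_0}_{H^{8,2}}+\norm{\vf}_{L^1_{I'}H^{8,2}}\bigr)+AT'K(M),
\end{equation*}
where $K(M)$ depends on $M$, $C_{14,M}$, $C_0$ and $C(M)$. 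By \eqref{eq:data} the first term is at most $2AR$, and choosing $T=T(M,\lambda_R,C_0,C_{14,M},C(M))\le 1/2$ small enough that $ATK(M)\le 6AR$ yields $\norm{\vue}_{C^0_{I'}H^{8,2}}\le 8AR$, i.e.\ \eqref{eq:unif}.

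For the extension to all of $[0,T]$, observe that $8AR<M$ with strict inequality by the choice \eqref{M}, so at $t=T_\ep$ the function $\vue(T_\ep)\in H^{8,2}$ has norm strictly below $M$. One then re-applies Proposition \ref{regul} with initial data $\vue(T_\ep)$ to extend $\vue$ to $[T_\ep,2T_\ep]$ while staying inside $X^M$, and the a priori estimate re-applies on the extended interval keeping the bound at $8AR$. Iterating a finite (possibly $\ep$-dependent) number of times covers all of $[0,T]$, and the resulting bound $8AR$ on $[0,T]$ is uniform in $\ep$.

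The main obstacle is the interplay between $A$, $M$, and $T$. The choices $M=(16A+2)R$ and $T$ small are rigid: $A$ must not depend on $M$ (else the bootstrap becomes circular), so the argument crucially exploits the splitting in Theorem \ref{lem:apri}, with $\tld C_0$ carrying data-level information into $A$ and $C_1$ absorbing the $M$-dependence that is subsequently tamed by smallness of $T$. A secondary technical nuisance is the extra regularity \eqref{reg:apr} needed to justify \eqref{eq:Hs} and \eqref{eq:wt} classically for $s\le 14$; this is dispatched by the approximation argument sketched above, since the bounds in Lemmas \ref{coeff} and \ref{remai} depend only on the $X^M$ norm and not on the higher regularity used to approximate.
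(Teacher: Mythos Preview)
Your proposal is correct and follows essentially the same approach as the paper: apply Theorem \ref{lem:apri} to \eqref{eq:Hs} and \eqref{eq:wt} via Lemma \ref{coeff}, control the remainders through Lemma \ref{remai}, choose $T$ small, then remove the extra regularity \eqref{reg:apr} by mollifying the data and invoking Corollary \ref{cont}. The only cosmetic difference is that the paper absorbs the term $AT\,C_{14,M}(1+M^{18})\norm{\vue}_{C^0_IH^{8,2}}$ into the left-hand side rather than bounding $\norm{\vue}_{C^0_IH^{8,2}}\le M$ a priori as you do; either closure works.
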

\begin{proof}
Note, that the Proposition \ref{regul} holds on $[T',T'+T_\ep]$ with data $(\vu^\ep(T'),\vf)$ at $t=T'$, as long as $\norm{\vu^\ep(T'),\vf}_Y < \frac{M}{2}$, which will hold for any $T'\in I$ as long as \eqref{eq:unif} is valid.
 When this is the case, $\vue$ can be extended to solve \eqref{IVPe} on $[0,T'+T_\ep]$ in $X^M_{[0,T' + T_\ep]}$ and arguing inductively to extend $\vue$ to $I=[0,T]$. Thus it suffices to prove  \eqref{eq:unif}.\\

We first complete the proof under the assumption that \eqref{reg:apr} is valid.\\

 By the Lemma \ref{coeff} estimates \eqref{unif:L2}, \eqref{unif:Hs} and \eqref{unif:wt} are valid. Adding these inequalities and using equivalence of norms \eqref{def:norms} we get
 \begin{align*}
   \norm{\vue}_{C^0_I H^{8,2}} \le A(\norm{\vu_0}_{H^{8,2}} + \norm{\vf}_{L^1_I H^{8,2}}) + AT\norm{\vec F^{14}}_{\linf_IL^2}\\
    + AT\left(\sum_{s=1}^8\norm{\jap{x}^2\vec F^{s}}_{\linf_IL^2}+ \sum_{s=0}^8\norm{E_3^s\jap{x}\dx^s\vue}_{\linf_I L^2}\right)
 \end{align*}
 Using Lemma \ref{remai} we get
 \begin{align*}
   \norm{\vue}_{C^0_I H^{8,2}} \le A(\norm{\vu_0}_{H^{8,2}} + \norm{\vf}_{L^1_I H^{8,2}}) + AT C_{14,M}(1+M^{18})\norm{\vue}_{C^0_I H^{8,2}}
 \end{align*}
 Which after choosing $T$ small enough proves \eqref{eq:unif}.\\

 To remove assumption \eqref{reg:apr} we regularize the data and note that \eqref{eq:unif} is uniform for every $\norm{(\vu_0,\vf)}_Y <R$. More precisely, we convolve $(\vu_0,\vf)$ with a mollifier $\chi_m(x) = m\chi(m x)$, where $\chi \in C^\infty_0(\R)$ and $\int \chi(x) dx = 1$ to get $(\vu_{0,m},\vf_m)\in \Schw \times C^0_{[0,1]} \Schw$ and $\lim_{m\to \infty} (\vu_{0,m},\vf_m) = (\vu_0,\vf)$ in $Y$. Therefore, for $m$ large enough $\norm{(\vu_{0,m},\vf_m)}_Y <R$ and by the Corollary \ref{cont} the corresponding solution $\vu^\ep_m \in X^M_{I_\ep}\cap C^0_I H^{8+5,2}\cap C^1_I H^{4+5,2}$ and converges to $\vu^\ep$ in $X^M_I$ whenever both are defined.\\

 Applying Theorem \ref{thm:unif} to $\vue_m$, for which \eqref{reg:apr} applies, allows us to recover \eqref{eq:unif} for $\vu$ in the limit as $m\to \infty$,  as the constants $A$ and $T$ don't depend on $m$.
\end{proof}

\section{Removing regularization}\label{seclim}
We first construct solution $\vu$ of \eqref{eq} in a topology weaker than $C^0_I H^{8,2}\cap C^1_I H^{4,2}$.
\begin{proposition}
  \label{conv}
 There exists a $T>0$ and $I=[0,T]$ and a sequence of solutions of \eqref{IVPe} $\vu_{\ep_n}$ in $X^M_I$ for $\ep_n\to 0$, such that $\vu_{\ep_n} \to \vu$ in $C^0 H^{7,2}\cap C^1_I H^{4,2}$.
\end{proposition}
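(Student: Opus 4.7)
The plan is to use a Bona-Smith-type comparison argument on pairs $(\vue, \vuee)$, leveraging the uniform a priori bound of Theorem \ref{thm:unif} and the linear estimate of Theorem \ref{lem:apri}, to extract a Cauchy subsequence in $C^0_I H^{7,2} \cap C^1_I H^{4,2}$.

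By Theorem \ref{thm:unif} there is a common $T > 0$ so that every solution $\vue$ of \eqref{IVPe} exists on $I = [0,T]$ with $\norm{\vue}_{C^0_I H^{8,2}} \le M := 8AR$; combining the PDE with Lemma \ref{lem:Mos} and the elementary bound $\ep \norm{\dx^4 \vue}_{H^{4,2}} \le \ep M$ also yields $\norm{\dt \vue}_{C^0_I H^{4,2}} \le C(M)$ uniformly in $\ep$. For $0 < \ep' \le \ep \le 1$ the difference $\vw = \vue - \vuee$ satisfies $\vw(\cdot, 0) = \vec{0}$ and the linear equation
\begin{equation*}
\dt \vw + \ep \dx^4 \vw + N_{\vue} \vw = (\ep' - \ep) \dx^4 \vuee + (N_{\vuee} - N_{\vue}) \vuee,
\end{equation*}
with coefficients frozen at $\vue$. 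By Lemma \ref{coeff} these coefficients verify (L1)--(L3) uniformly in $\ep, \ep'$, so Theorem \ref{lem:apri} applies to $\vw$, to $\dx^s \vw$, and to the weighted analogues $\jap{x}^2 \dx^s \vw$ after commuting $\jap{x}^2$ past the dispersive operator as in \eqref{comm:par}.

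Running the multi-level energy scheme of section \ref{coeff:sec} at all levels $s \le 7$ and summing, the forcing $(\ep' - \ep)\dx^4 \vuee$ contributes $O(|\ep' - \ep|M)$ thanks to the uniform $H^{8,2}$ bound on $\vuee$. The Moser difference $(N_{\vuee} - N_{\vue})\vuee$ is expanded via Leibniz together with a mean-value identity for the coefficients (so that the prefactor $a(\vuee) - a(\vue)$ is linear in $(\vw, \dx\vw, \dx^2\vw)$ with uniformly bounded multipliers), and the resulting terms are closed using Sobolev embedding and interpolation against the uniform bound $\norm{\vw}_{H^{8,2}} \le 2M$. Gronwall's inequality, with $T$ shrunk if needed to absorb the linear-in-$\vw$ contributions, then gives $\norm{\vw}_{C^0_I H^{7,2}} \to 0$ as $\ep, \ep' \to 0$, making $\{\vue\}$ Cauchy in $C^0_I H^{7,2}$. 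Any sequence $\ep_n \to 0$ therefore produces a limit $\vu \in C^0_I H^{7,2}$. For the $C^1_I H^{4,2}$ convergence, the PDE gives $\dt \vue_n = -\ep_n \dx^4 \vue_n - N_{\vue_n}\vue_n + \vf$; the parabolic term is $O(\ep_n)$ in $H^{4,2}$ by the uniform bound, and a Lipschitz version of Lemma \ref{lem:Mos} at $s = 7$ gives $N_{\vue_n}\vue_n \to N_{\vu}\vu$ in $C^0_I H^{4,2}$ (this variant uses only three spatial derivatives in $N$ and is proved identically to the stated $s = 8, 9$ cases). Hence $\vue_n \to \vu$ in $C^1_I H^{4,2}$ as well.

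The main obstacle lies in closing the comparison estimate in $C^0_I H^{7,2}$: the Moser difference $\dx^s[(N_{\vuee} - N_{\vue})\vuee]$ at $s = 7$ naively loses two derivatives above the target level, through terms of the schematic form $\dz a(\vue)\, \dx^{s+2}\vw \cdot \dx^3 \vuee$ in the Leibniz expansion. The essential point is that in the energy inner product $\int \dx^s \vw \cdot \dx^s[(N_{\vuee} - N_{\vue})\vuee]\, dx$, integration by parts exploiting the symmetric structure of $a$ (the same device underlying Theorem \ref{lem:apri}) transfers these top-order contributions onto $\vuee$, which is controlled by the uniform $M$-bound; what remains is of the form $C(M)\norm{\vw}_{H^{7,2}}$ times a uniform constant, so the Gronwall argument on $\norm{\vw}_{C^0_I H^{7,2}}$ closes linearly and produces the desired convergence at rate $O(|\ep - \ep'|)$.
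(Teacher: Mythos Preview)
There is a genuine gap in your closing argument for the $H^{7,2}$ Cauchy estimate. The integration-by-parts device you invoke does not dispose of the top-order contribution from the Moser difference. Concretely, the worst term in $\dx^7\bigl[(N_{\vuee}-N_{\vue})\vuee\bigr]$ is of the schematic form $M_{ik}\,\dx^9 w_k$ with $M_{ik}=\partial_{z^2_k}\bar a_{ij}\,\dx^3 u^{\ep'}_j$. In the energy pairing $\int \dx^7 w_i\, M_{ik}\,\dx^9 w_k\,dx$ the relevant indices are $i$ and $k$; the symmetry of $a$ is in $i,j$, so $M$ is \emph{not} symmetric in $i,k$ for a system. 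Even for the symmetric part of $M$, one integration by parts leaves $\int M^{\mathrm{sym}}_{ik}\,\dx^8 w_i\,\dx^8 w_k\,dx$, which is of size $C(M)\norm{\vw}_{H^8}^2$. This cannot be absorbed by the gauge term produced by $N_{\vue}$ (the constant $N$ in \eqref{phi} was fixed by the coefficients of $N_{\vue}$, not by $M$), and interpolating $\norm{\vw}_{H^8}^2\le \norm{\vw}_{H^7}\norm{\vw}_{H^9}\le 2M\norm{\vw}_{H^7}$ still yields a contribution $C(M)M\norm{\vw}_{H^7}$ with no smallness in $\ep-\ep'$. Gronwall then gives only $\norm{\vw}_{C^0_I H^7}=O(T)$, not $o(1)$ as $\ep,\ep'\to 0$. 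Your claim that the top-order piece ``transfers onto $\vuee$'' is therefore not justified: the two high-derivative factors both sit on $\vw$, and the $\vuee$-factor is already low order.

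The paper avoids this loss in two ways. First, it does \emph{not} split off a Moser forcing: the full nonlinear difference is written as $N_{\vue}\vue-N_{\vuee}\vuee=L_{\vue,\vuee}(\vue-\vuee)$, where the mean-value coefficients (including the dangerous $\partial_{z^2}a\cdot\dx^3\vuee$ piece) are absorbed into the $\tilde b$, $\tilde c$, $\tilde d$ of a new linear operator satisfying (L1)--(L3). Second, and crucially, Theorem~\ref{lem:apri} is applied \emph{only at the $L^2$ level}, where no differentiation-induced loss occurs; this yields $\norm{\vue-\vuee}_{C^0_I L^2}=O(\ep-\ep')$. Convergence in $C^0_I H^{13}$ (and, after the weighted analogue, in $C^0_I H^{7,2}$) then follows by interpolating this $L^2$ rate against the uniform $H^{8,2}$ bound, rather than by closing a high-order energy estimate for the difference. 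If you want to run directly at $H^7$, you must likewise move the $\dx^2\vw$-terms into the operator (i.e.\ use $L_{\vue,\vuee}$) so that the gauge can be chosen to dominate them; treating them as forcing cannot close.
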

\begin{proof}
Take a difference of solutions $\vue$ and $\vuee \in X^M_I$ of \eqref{IVPe} for $0<\ep'<\ep \le 1$:
\begin{align}\label{diff}
\begin{cases}
  \dt (\vue-\vuee) + N_{\vue}\vue - N_{\vuee}\vuee = -\ep \dx^4(\vue-\vuee) + (\ep-\ep')\dx^4\vuee\\
  \vue-\vuee(x,0)\equiv 0
\end{cases}
\end{align}
Then we can write $N_{\vu^\ep}\vu^\ep - N_{\vu^{\ep'}}\vu^{\ep'}=L_{\vue,\vuee}(\vue-\vuee)$ where by the Fundamental Theorem of Calculus $L_{\vue,\vuee}$ has the form
\begin{align*}
\begin{split}
  &     {L}_{\vue,\vuee} =a(x,t,(\dx^j \vu^{\ep})_{j\le 2})\dx^3
     + \tld{b}(x,t,(\dx^j \vu^{\ep})_{j\le 3},(\dx^j \vu^{\ep'})_{j\le 3})\dx^2\\
     & + \tld{c}(x,t,(\dx^j \vu^{\ep})_{j\le 3},(\dx^j \vu^{\ep'})_{j\le 3})\dx
     + \tld{d}(x,t,(\dx^j \vu^{\ep})_{j\le 3},(\dx^j \vu^{\ep'})_{j\le 3})I
\end{split}
  \end{align*}
with coefficients
 \begin{align*}
& \tld{b}_{ij} = b_{ij}(x,t,\vu_\ep,\dx \vu_\ep,\dx^2 \vu_\ep)
 + \dx^3u_{\ep',k}\int_0^1 \partial_{z^2_j} a_{ik}(x,t,(\dx^\alpha\left((1-\theta)\vu_\ep + \theta\vu_{\ep'}\right)_{\alpha\le 2})) d\theta
\end{align*}
   and similarly for $\tld{c}$, $\tld{d}$.\\

By an argument identical to the Lemma \ref{coeff}, for $\vu$, $\vv \in X^M_I$, $L_{\vu,\vv}$ satisfies (L1)--(L3) with bounds $\tld C_0$, $C_1$ dependent on the same parameters as in Lemma \ref{coeff}. Hence for an appropriate $T$ Theorem \ref{lem:apri} applied to \eqref{diff} gives
\begin{align}\label{diff:L2}
  \norm{\vue-\vuee}_{C^0_IH^{8,2}} \le A\norm{(\ep-\ep')\dx^4\vuee}_{L^1_I L^2_x}\le ATM(\ep-\ep')
\end{align}
We then use that $\norm{\vue-\vuee}_{C^0_IH^{8,2}} \le 2M$ and for $0<1 \le 1$ interpolate $H^{13}$ between $L^2$ and $H^{8+6}\subset H^{8,2}$ to get
\begin{align}\label{diff:Hs}
\begin{split}
  & \norm{\vu^{\ep} - \vu^{\ep'}}_{\linf_tH^{13}_x} \le \norm{\vu^{\ep} - \vu^{\ep'}}_{L^{\infty}_I L^2_x}^{\frac{1}{14}}\norm{\vu^{\ep} - \vu^{\ep'}}_{\linf_tH^{20}_x}^{\frac{13}{14}} \le [(\ep - \ep')AT]^{\frac{1}{14}}{(2M)}^{\frac{13}{14}}
\end{split}
\end{align}
Hence by completeness of $C^0_I H^{13}_x$ we can take a sequence $\ep_n\to 0$ to construct $\vu = \lim_{\ep_n \to 0} \vue$.\\

  Note, that $\linf_IH^{14}= \left(L^1_IH^{-14}\right)^{*}$, and hence the closed unit ball in it is weak-$*$ compact by the Banach-–Alaoglu theorem. As a consequence, the sequence $\vu^{\ep_n} \in X^M_I \subset  \linf_I H^{8,2}_x$ has a weak limit $\vu^{\ep_n} \weakto \vv \in  \linf_IH^{14}_x$ up to a subsequence. Hence, for a.e. $x$, there is a further subsequence, such that $\vu^{\ep_n}(x) \to \vv(x)$. But by uniqueness of limits $\vv = \vu$. A similar argument for $\linf_I H^8(\jap{x}^4 dx)$ gives
\begin{equation}\label{eq:limit:weak}
  \vu^{\ep_n} \weakto \vu \in \linf_I H^{8,2}
\end{equation}

Using Fatou lemma gives $\norm{\vu}_{\linf_IH^{8,2}_x}\le {M}$.\\

For strong convergence in $C^0_I H^{8-1,2}$ we multiply \eqref{diff} by $\jap{x}^2$ and rewrite it as
\begin{align*}
  \begin{cases}
  \dt \jap{x}^2(\vue-\vuee) + L_{\vue,\vuee}\jap{x}^2(\vue-\vuee) = -\ep \dx^4\jap{x}^2(\vue-\vuee)\\ + (\ep-\ep')\jap{x}^2\dx^4\vuee+ E_3\jap{x}(\vue-\vuee)\\
  \jap{x}^2(\vue-\vuee)(x,0)\equiv 0
\end{cases}
\end{align*}
with $E_3 = [-\ep\dx^4-L_{\vue,\vuee},\jap{x}^2]$ is an order $3$ differential operator with coefficients bounded by $4+C_{1,M}(1+M)$. Applying Theorem \ref{lem:apri} all terms are treated as in the proof of \eqref{diff:Hs}, except we use \eqref{wt:int} to interpolate
\begin{align*}
  \norm{E_3\jap{x}(\vue-\vuee)}_{\linf_I L^2_x} \les_{M,C_{1,M}}\norm{\vue-\vuee}_{\linf_I H^6}^{\frac{1}{2}}\norm{\jap{x}^2(\vue-\vuee)}_{\linf_I L^2_x} \to 0
\end{align*}
as $\ep$, $\ep \to 0$. Interpolating $H^{8-1}$ between $L^2$ and $H^8$ implies $\jap{x}^2\vu_{\ep_n} \to \jap{x}^2\vu$ in $C^0_I H^{8-1}$.\\

Finally, using \eqref{diff} and Lemma \ref{lem:Mos} implies $\dt \vu_{\ep_n} \to \dt \vu \in C^0_I H^{4,2}$.
\end{proof}

Moreover, for uniqueness of the limit we use \eqref{diff:Hs} for $\vue=\vu$ and $\vuee=\vu'$ for $\ep=\ep'=0$. The regularity in the Proposition \ref{conv} is enough for the Lemma \ref{coeff} to be valid. Moreover, as Theorem \ref{lem:apri} is valid for $[-T,T]$, when $\ep = 0$ we get uniqueness for $[-T,T]$.\\

We now aim to recover the loss of the derivative in Proposition \ref{conv} following the regularization method of Bona-Smith \cite{BS75}. That is we regularize the data with a parameter $\kappa$ and then apply the Theorem \ref{lem:apri} for the difference at the top level of regularity keeping a careful track of $\kappa$.

\subsection{Data regularization}\label{sec:Bona}
Let $0\le \phi(\xi) \le 1$ be a radial smooth bump
     \[
     \phi(\abs{\xi})=      \begin{cases}
      & 1,\text{ if }\abs{\xi} \le 1\\
       & 0,\text{ if }\abs{\xi} \ge 2
     \end{cases}
     \]
     Define for all $0<\kappa\le 1$, $\hat u_{0,\kappa} = \hat u_0\cdot \phi(\kappa\abs{\xi})$.
 \begin{lemma}
   \label{BS} Let $\K \Subset H^{14}$ be a compact set. Then $\forall \kappa>0$, and any $\vu_0 \in \K$, $\vu_{0,\kappa} \in \Schw$ satisfies
   \begin{subequations}
     \begin{align}
      \norm{\vu_{0,\kappa}}_{H^{14+j}} = O(\kappa^{-j})       \text{ for all } j\ge 0\,\,\,  \label{eq:BS:above}\\
     \norm{\vu_{0,\kappa}-\vu_0}_{L^2} = o(\kappa^{14}) \text{ and } \norm{\vu_{0,\kappa}-\vu_0}_{H^{14}} = o(1)\label{eq:BS:L2}
   \end{align}
   \end{subequations}
   with the convergence rate dependent on $\K$.
 \end{lemma}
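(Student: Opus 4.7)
The plan is to work throughout on the Fourier side via Plancherel, exploiting the support properties of $\phi(\kappa|\xi|)$ and $1-\phi(\kappa|\xi|)$. For the boundedness \eqref{eq:BS:above}, note that $\phi(\kappa|\xi|)$ vanishes for $|\xi| \ge 2/\kappa$, so on its support $\langle\xi\rangle^{2j} \le C_j \kappa^{-2j}$ (for $\kappa \le 1$); hence
\[
\norm{\vu_{0,\kappa}}_{H^{14+j}}^2 = \int \jap{\xi}^{28}\jap{\xi}^{2j}\abs{\phi(\kappa\abs{\xi})}^2\abs{\hat \vu_0(\xi)}^2 d\xi \le C_j\kappa^{-2j}\norm{\vu_0}_{H^{14}}^2,
\]
giving \eqref{eq:BS:above} with a constant depending only on $\sup_{\K}\norm{\cdot}_{H^{14}}$.

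For the convergence \eqref{eq:BS:L2}, observe that $1-\phi(\kappa|\xi|)$ is supported in $\{|\xi|\ge 1/\kappa\}$, on which $1 \le \kappa^{28}\jap{\xi}^{28}$. Thus
\[
\norm{\vu_{0,\kappa}-\vu_0}_{L^2}^2 \le \kappa^{28}\int_{\abs{\xi}\ge 1/\kappa}\jap{\xi}^{28}\abs{\hat \vu_0}^2\, d\xi,
\]
and since $\jap{\xi}^{28}|\hat\vu_0|^2 \in L^1$, the tail integral tends to $0$ by dominated convergence as $\kappa \to 0$, giving $\norm{\vu_{0,\kappa}-\vu_0}_{L^2} = o(\kappa^{14})$. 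For the $H^{14}$ statement I would apply DCT directly to $|1-\phi(\kappa|\xi|)|^2\jap{\xi}^{28}|\hat\vu_0(\xi)|^2$ pointwise (using $|1-\phi(\kappa|\xi|)|\le 1$ as a dominating factor and that it goes to $0$ pointwise), yielding $\norm{\vu_{0,\kappa}-\vu_0}_{H^{14}} = o(1)$.

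The genuinely new point is the uniformity of the rate $o(\cdot)$ over $\vu_0 \in \K$, which is the one step where compactness enters. I would handle this by an $\epsilon$-net argument: given $\epsilon>0$, choose finitely many $\vu_0^{(1)},\ldots,\vu_0^{(N)}\in\K$ with every $\vu_0\in\K$ within $H^{14}$-distance $\epsilon$ of some $\vu_0^{(i)}$; for each $i$ pick $\kappa_i$ so that the relevant tail integral is $<\epsilon^2$ for $\kappa<\kappa_i$; then use the triangle inequality in $H^{14}$ together with the elementary bound $\|(1-\phi(\kappa|\xi|))\jap{\xi}^{14}(\hat\vu_0-\hat\vu_0^{(i)})\|_{L^2} \le \|\vu_0-\vu_0^{(i)}\|_{H^{14}} < \epsilon$ to conclude that the same $\kappa_0=\min_i\kappa_i$ works for all $\vu_0\in\K$. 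The identical argument applied to the weighted-tail estimate gives uniform $o(\kappa^{14})$ for the $L^2$ statement.

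The only mild subtlety I anticipate is making sure the $o(\kappa^{14})$ rate really uses $\K\Subset H^{14}$ (rather than just bounded in $H^{14}$), since a sequence of $H^{14}$ functions with bounded norms but mass concentrating in high frequencies would defeat a uniform rate; compactness rules this out, and the finite-net argument is the clean way to package it.
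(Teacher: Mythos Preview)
Your proof is correct and follows the same overall architecture as the paper (Plancherel, support properties of the cutoff, dominated convergence). There are two minor differences worth noting.

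For the $o(\kappa^{14})$ bound in \eqref{eq:BS:L2}, the paper extracts the factor $\kappa^{28}$ by Taylor expanding $\phi(\kappa|\xi|)-\phi(0)$ at the origin, iterating $14$ times using $\phi^{(j)}(0)=0$ and then invoking $\partial_\xi^j\phi\equiv 0$ on $B_1(0)$ to land on the tail integral $\int_{|\xi|\ge 1/\kappa}|\xi^{14}\hat u_0|^2\,d\xi$. Your route is more direct: you simply use that $1-\phi(\kappa|\xi|)$ is supported in $\{|\xi|\ge 1/\kappa\}$ and bound $1\le \kappa^{28}\jap{\xi}^{28}$ there. Both arguments arrive at the same tail integral; yours is shorter and uses nothing about the cutoff beyond $0\le\phi\le 1$ and its support.

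For the uniformity over $\K$, the paper just asserts that dominated convergence is uniform on compact subsets of $H^{14}$, while you supply the $\epsilon$-net argument that actually justifies this. Your remark that compactness (and not mere boundedness) is genuinely needed here is correct and well-observed.
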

\begin{proof}
     Let $j\ge 0$ and $0<\kappa <1$
     \begin{align}\label{BSP1}
     \begin{split}
       & \norm{u_{0,\kappa}}_{H^{14+j}}^2  = \int_{\abs{\xi} \le \frac{2}{\kappa}} (1+\abs{\xi}^2)^{14}\abs{\hat u_0(\xi) }^2 (1+\abs{\xi}^2)^j \phi(\kappa\abs{\xi})^2 d\xi \le  (\frac{3}{\kappa})^{2j} \norm{u_0}_{H^{14}}^2
     \end{split}
     \end{align}
     Which proves \eqref{eq:BS:above}.\\
      For \eqref{eq:BS:L2} is suffices to show the first estimate, with the second done identically. By the Fundamental Theorem of Calculus:
     \begin{align*}
      & \norm{u_{0,\kappa}-u_0}_{L^2}^2 = \int (\phi(\kappa\abs{\xi})-1)^2\abs{\hat u_0(\xi)}^2 d\xi \le \int \left(\sup_{\eta \in B_{\kappa\abs{\xi}}(0)}\abs{\phi'(\eta)}\right)^2 \kappa^2\abs{\xi\hat u_0}^2 d\xi
     \end{align*}
     where we used $\phi(0)=1$ and defined $B_{\kappa\abs{\xi}}(0) = [-\kappa\abs{\xi},\kappa\abs{\xi}]$. As $\dxi^j\phi(0)=0$ for all $j>0$, we can continue with the Taylor expansion of $\abs{\phi^{(j)}(\eta)}$ $14$ times and then use $\dxi^j\phi\equiv 0$ on $B_1(0)$ for $j>0$ to conclude
     \begin{align*}
       & \norm{u_{0,\kappa}-u_0}_{L^2}^2  \le \kappa^{28} \norm{\dxi^{14} \phi}_{\linf}^2\int_{\abs{\xi}\ge \frac{1}{\kappa}} \abs{\xi^{14}\hat u_0}^2 d\xi
     \end{align*}
The $o(1)$ rate comes by the Lebesgue Dominated Convergence that is uniform for $\vu_0$ in a compact $K$.
\end{proof}

\begin{lemma}\label{BS:wt}
Lemma \ref{BS} is valid with weights. That is for $K\Subset  H^{8,2}$
     \begin{align*}
     \norm{\jap{x}^2\vu_{0,\kappa}}_{H^{8+j}} = O(\kappa^{-j})       \text{ for all } j\ge 0\\\
     \norm{\jap{x}^2(\vu_{0,\kappa}-\vu_0)}_{L^2} = o(\kappa^{8}) \text{ and } \norm{\jap{x}^2(\vu_{0,\kappa}-\vu_0)}_{H^{8}} = o(1)
   \end{align*}
\end{lemma}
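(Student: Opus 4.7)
My plan is to reduce the weighted statement to the unweighted Lemma \ref{BS} by commuting $\jap{x}^2$ through the Fourier multiplier $P_\kappa$ with symbol $\phi(\kappa|\xi|)$, so that $\vu_{0,\kappa} = P_\kappa \vu_0$. Since $\jap{x}^2 = 1+x^2$, I would decompose
\[
\jap{x}^2 \vu_{0,\kappa} = P_\kappa(\jap{x}^2 \vu_0) + [x^2, P_\kappa] \vu_0,
\]
and likewise for $\jap{x}^2(\vu_{0,\kappa}-\vu_0)$. By \eqref{def:norms}, the assumption $\vu_0\in H^{8,2}$ supplies $\jap{x}^2\vu_0\in H^8$, $x\vu_0\in H^{11}$, and $\vu_0\in H^{14}$, all with norms uniform over the compact set $\K$.

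For the main piece I would apply Lemma \ref{BS} to the datum $\jap{x}^2\vu_0 \in H^8$; the proof of Lemma \ref{BS} is insensitive to the specific exponent $14$ and works at any Sobolev level, with the induced compact family $\{\jap{x}^2\vu_0:\vu_0\in\K\}\Subset H^8$ providing the uniform rates. This yields exactly the three target bounds with the exponent $14$ replaced by $8$.

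For the commutator $[x^2,P_\kappa]$, I would compute directly via Fourier using $\widehat{xg}=i\dxi\hat g$:
\[
\mathcal{F}\bigl([x^2,P_\kappa]\vu_0\bigr)(\xi) = -\kappa^2 \tilde\phi''(\kappa\xi)\widehat{\vu}_0(\xi) + 2i\kappa\tilde\phi'(\kappa\xi)\widehat{x\vu_0}(\xi),
\]
where $\tilde\phi(\xi)=\phi(|\xi|)$ is smooth (as $\phi\equiv 1$ near $0$) with $\tilde\phi',\tilde\phi''$ supported in $\{1\le|\xi|\le 2\}$. The annular support $\{\kappa^{-1}\le|\xi|\le 2\kappa^{-1}\}$ of the resulting multipliers, combined with the prefactors $\kappa^2$ and $\kappa$ and the regularity $\vu_0\in H^{14}$, $x\vu_0\in H^{11}$, will force each commutator piece to be strictly smaller than the main-term contribution: $O(\kappa^{-j})$ in $H^{8+j}$, $O(\kappa^{12})$ in $L^2$, and $O(\kappa)$ in $H^8$.

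The main (mild) obstacle will be to ensure the commutator stays subdominant \emph{simultaneously} at the low-regularity end (preserving the rates $o(\kappa^8)$ and $o(1)$) and at the high-regularity end (preserving $O(\kappa^{-j})$ for arbitrary $j$). The compact support of $\tilde\phi'$, $\tilde\phi''$ away from $0$ will handle the former via $|\xi|\ge\kappa^{-1}$, while their boundedness together with $|\xi|\le 2\kappa^{-1}$ handles the latter, so a single Fourier-side computation suffices for both regimes.
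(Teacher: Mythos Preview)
Your approach is correct and is essentially the same as the paper's: both decompose $\jap{x}^2\vu_{0,\kappa}=P_\kappa(\jap{x}^2\vu_0)+[x^2,P_\kappa]\vu_0$, apply Lemma~\ref{BS} at Sobolev level $8$ to the main term, and control the commutator by computing its Fourier transform (the paper arrives at the identical formula via the convolution identity $\jap{x}^2-\jap{x-y}^2=2(x-y)y+y^2$). The only cosmetic difference is that the paper phrases the commutator bound as ``Taylor expand $\dxi\phi$ to extract arbitrarily high powers of $\kappa$,'' whereas you invoke the annular support of $\tilde\phi',\tilde\phi''$ directly; since $\phi\equiv 1$ near $0$ these are the same observation.
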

\begin{proof}
The Lemma \ref{BS} is valid for $(\jap{x}^2\vu_0)_\kappa$ (with obvious modification of $14$ to $8$), so it suffices to show that $\vv_\kappa = \jap{x}^2\vu_{0,\kappa} - (\jap{x}^2\vu_0)_\kappa$ has norms $\norm{\vv_\kappa}_{H^s} = O(\kappa^{-N})$ for all $s$ and $N$.\\

Using $\jap{x}^2 - \jap{x-y}^2 = 2(x-y)\cdot y + \abs{y}^2$ we get
\begin{align*}
  \vv_\kappa(x) = \int \left(2(x-y)\cdot y + \abs{y}^2\right) \vu_0(x-y)\frac{1}{\kappa} \check{\phi} (\frac{y}{\kappa}) dy
\end{align*}
Hence
\begin{align*}
  \widehat{\vv_\kappa}(\xi) = \widehat{(2 x \vu_0)}(\xi)\kappa i\dxi \phi(\kappa\xi) - \widehat{\vu_0}(\xi)\kappa^2\dxi^2\phi(\kappa\xi)
\end{align*}
and as $\dxi^j\phi(0)=0$ for $j\ge 1$ we can get arbitrarily high power of $\kappa$ by Taylor expanding $\dxi\phi$. Thus remark follows, as $x\vu_0$ and $\vu_0 \in L^2$.
\end{proof}
\begin{remark}\label{rem:f}
Lemmata \ref{BS} and \ref{BS:wt} hold for $\K \Subset L^1_I H^{8,2}$ or $\K\Subset C^0_IH^{8,2}$. That is for each $f\in \K$, there is $f_\kappa \in L^1_I \Schw$, such that \begin{align*}
     & \norm{\vf_\kappa-\vf}_{L^1_{I}L^2} = o(\kappa^{14}) \,\,\,     \norm{\jap{x}^2(\vf_\kappa-\vf)}_{L^1_{I}L^2} = o(\kappa^{8})\\
      & \norm{\vf_{\kappa}-\vf}_{L^1_{I}H^{8,2}} = o(1) \,\,\, \norm{\vf_{\kappa}}_{L^1_{I}H^{8+j,2}} = O(\kappa^{-j})       \text{ for all } j\ge 0\,\,\,
   \end{align*}
   and similarly for $K\Subset C^0_I H^{4,2}$.
\end{remark}
For $L^1H^{8,2}$ redo the Lemmas using
\begin{align*}
\lim_{\kappa \to 0} \int_I(\int_{\abs{\xi}\ge \frac{1}{\kappa}} (1+\abs{\xi}^2)^{14}\abs{\hat \vf}^2 d\xi)^{\frac{1}{2}} dt = 0,
\end{align*}
for $o(1)$ convergence rate, which follows from Lebesgue Dominated convergence in $t$ and $\xi$.\\

Finally, for $f\in C^0_I H^{4,2}$ we use the continuity of $f$ to see that $f(I)$ is a compact set in $H^{4,2}$.\\

Putting together Lemmata \ref{BS} and \ref{BS:wt} and Remark \ref{rem:f} we get
\begin{proposition}\label{BS:data}
\begin{equation*}
  \norm{(\vu_0,\vf)-(\vu_{0,\kappa},\vf_\kappa)}_Y = o(1)
\end{equation*}
and $(\vu_{0,\kappa},\vf_\kappa)\in H^{8+s,2}\times (L^1_{[-1,1]}H^{8+s,2}\cap C^0_{[-1,1]}H^{4+s,2})$ with $O(\kappa^{-s})$ norms for $s\ge 0$.
\end{proposition}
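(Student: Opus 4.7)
The plan is to derive Proposition \ref{BS:data} by reading off each summand of the $Y$-norm of the difference from the three ingredients already in hand: Lemma \ref{BS} (unweighted $H^{14}$), Lemma \ref{BS:wt} (weighted endpoint), and Remark \ref{rem:f} (the forcing). The first step is to unpack the definition \eqref{def:norms} with $k=2$: the $H^{8,2}$ norm of a function is equivalent to
\[
\norm{f}_{H^{14}} + \norm{\jap{x} f}_{H^{11}} + \norm{\jap{x}^2 f}_{H^8},
\]
so the problem reduces to controlling these three quantities for $f=\vu_{0,\kappa}-\vu_0$ and for $f = (\vf_\kappa - \vf)(t)$ uniformly in $t$, together with their $L^1_I$ counterparts.

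Second, I would apply Lemma \ref{BS} to the first term and Lemma \ref{BS:wt} to the third: both give $o(1)$, with rates uniform over the compact set $\K = \{\vu_0\}$. For the intermediate term $\norm{\jap{x}(\vu_{0,\kappa}-\vu_0)}_{H^{11}}$ I would proceed exactly as in the proof of Lemma \ref{BS:wt}, writing
\[
\jap{x}\vu_{0,\kappa} - (\jap{x}\vu_0)_\kappa
= \int \bigl(\jap{x}-\jap{x-y}\bigr)\vu_0(x-y)\tfrac{1}{\kappa}\check{\phi}(y/\kappa)\,dy,
\]
Taylor-expanding $\phi$ about $\xi=0$ to absorb the $\kappa$-scaling, and noting that the error term is Schwartz-controlled by $x\vu_0, \vu_0 \in L^2$; alternatively, one may simply interpolate $\norm{\jap{x}(\vu_{0,\kappa}-\vu_0)}_{H^{11}}$ between the two endpoint rates from Lemmas \ref{BS} and \ref{BS:wt}, which immediately yields $o(1)$. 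For the forcing, the two pieces $\norm{\vf_\kappa-\vf}_{L^1_{I}H^{8,2}}$ and $\norm{\vf_\kappa-\vf}_{C^0_{I}H^{4,2}}$ follow verbatim from Remark \ref{rem:f} by the same three-part decomposition, using that $\vf$ is compact in $L^1_I H^{8,2}$ and that $\vf(I)$ is compact in $H^{4,2}$ by continuity.

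Third, for the $O(\kappa^{-s})$ regularity bounds, I apply the \emph{above} part of each statement: Lemma \ref{BS} gives $\norm{\vu_{0,\kappa}}_{H^{14+s}} = O(\kappa^{-s})$; Lemma \ref{BS:wt} gives $\norm{\jap{x}^2\vu_{0,\kappa}}_{H^{8+s}} = O(\kappa^{-s})$; the intermediate weighted piece is controlled by the same commutator identity as before with $\phi$ replaced by its high-frequency part, or by interpolation between the two endpoints, giving $O(\kappa^{-s})$. Repackaging via \eqref{def:norms} gives $\norm{\vu_{0,\kappa}}_{H^{8+s,2}} = O(\kappa^{-s})$, and the analogous bounds for $\vf_\kappa$ in $L^1_I H^{8+s,2}\cap C^0_I H^{4+s,2}$ are exactly Remark \ref{rem:f}.

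The main obstacle, and really the only non-bookkeeping point, is the mixed intermediate norm $\norm{\jap{x}\,\cdot\,}_{H^{11}}$, since neither Lemma \ref{BS} nor Lemma \ref{BS:wt} addresses it directly; I expect either the interpolation route or a direct Fourier calculation to dispose of this cleanly, after which the proposition is a matter of assembling the pieces and invoking the equivalence \eqref{def:norms}.
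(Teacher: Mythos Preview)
Your proposal is correct and follows exactly the route the paper takes: the paper's entire proof is the single line ``Putting together Lemmata \ref{BS} and \ref{BS:wt} and Remark \ref{rem:f} we get'', and your decomposition via \eqref{def:norms} plus interpolation for the intermediate $\norm{\jap{x}\,\cdot\,}_{H^{11}}$ term (which is precisely the estimate \eqref{wt:int} already recorded in the paper) is the obvious way to unpack that sentence. The ``main obstacle'' you flag is not really an obstacle at all once \eqref{wt:int} is in hand, which is presumably why the paper does not comment on it.
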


We now want to improve the result of the Proposition \ref{conv} for the solution $\vu^\ep_\kappa$ of \eqref{IVPe} for data $(\vu_{0,\kappa},\vf_\kappa)$. By the Proposition \ref{BS:data}, for $\kappa>0$ small enough $\norm{(\vu_{0,\kappa},\vf_\kappa)}_Y <R$, hence the Theorem \ref{thm:unif} applies to the $\vue_\kappa$. Moreover, by Proposition \ref{regul}, $\vue_\kappa \in C^0_I H^{8+s,2}\cap C^1_I H^{4+s,2}$ for any $s\ge 0$. In fact, $\vue_\kappa$ satisfies the following uniform bound
\begin{proposition}\label{pers}
For interval $I$ from Theorem \ref{thm:unif} there exist a constant $C$ independent of $\ep$ and $\kappa$, such that for $j=1,2,3$:
\begin{align}\label{pers:eq}
  \norm{\vue_\kappa}_{C^0_I H^{8+j,2}} \le C (\norm{\vu_{0,\kappa}}_{H^{8+j,2}} +\norm{\vf_\kappa}_{L^1_I H^{8+j,2}}) =O(\kappa^{-j})
\end{align}
\end{proposition}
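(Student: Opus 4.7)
The plan is to promote the $H^{8,2}$ argument of Theorem \ref{thm:unif} to the higher regularity level $H^{8+j,2}$ with $j\in\{1,2,3\}$. Two uniform inputs make this possible: first, the bound $\norm{\vue_\kappa}_{C^0_I H^{8,2}} \le 8AR \equiv M$ already delivered by Theorem \ref{thm:unif}, independent of $\kappa$ and $\ep$; second, the persistence of regularity of Corollary \ref{cont}, which, since the regularized data $(\vu_{0,\kappa},\vf_\kappa)$ is Schwartz-class by Proposition \ref{BS:data}, gives $\vue_\kappa \in C^0_I H^{8+s,2}\cap C^1_I H^{4+s,2}$ for every $s\ge 0$. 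Hence the differentiated equations \eqref{eq:Hs} and their weighted versions \eqref{eq:wt} are valid classically for every $s$ we shall need.

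For each $s=0,1,\dots,14+j$ we apply Theorem \ref{lem:apri} to \eqref{eq:Hs} with coefficients frozen at $\vue_\kappa$, and likewise to \eqref{eq:wt}. An immediate extension of Lemma \ref{coeff} to this range of $s$---which is permitted by the hypothesis $J\ge 19$ in (NL2), supplying enough smoothness of $a,b,c,d$ to differentiate up to $17$ times---shows that the linearised coefficients satisfy (L1)--(L3) with $\lambda=\lambda(R)$, $\tld C_0 = \tld C_0(R,C_0,C_{17,R})$, and $C_1=C_1(M,C_0,C_{17,M},C(M))$. In particular these constants depend only on $M$ and the data, not on $\kappa$ or $\ep$; so the constant $A$ produced by Theorem \ref{lem:apri} at every level $s$ is uniform in $\kappa,\ep$.

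Next, a tame Moser bound extending Lemma \ref{remai} to $s\le 14+j$ gives
\[
\norm{\vec F^s}_{C^0_I L^2} + \norm{\jap{x}^2\vec F^s}_{C^0_I L^2} + \norm{E_3^s \jap{x}\dx^s\vue_\kappa}_{C^0_I L^2} \le C(M,C_{s,M})\, \norm{\vue_\kappa}_{C^0_I H^{s,2}}.
\]
Summing the estimates from Theorem \ref{lem:apri} applied to \eqref{eq:Hs} and \eqref{eq:wt} over $0\le s\le 14+j$ and invoking the norm equivalence \eqref{def:norms}, one obtains
\[
\norm{\vue_\kappa}_{C^0_I H^{8+j,2}} \le A\bigl(\norm{\vu_{0,\kappa}}_{H^{8+j,2}} + \norm{\vf_\kappa}_{L^1_I H^{8+j,2}}\bigr) + AT\, C(M,C_{17,M})\, \norm{\vue_\kappa}_{C^0_I H^{8+j,2}}.
\]
Choosing $T$ small enough that the prefactor of the last term is at most $\tfrac12$---this shrinks the $T$ of Theorem \ref{thm:unif} if necessary, but the required threshold depends only on $M$ and $C_{17,M}$, hence is uniform in $\kappa,\ep$---allows absorption and proves \eqref{pers:eq}. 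Proposition \ref{BS:data} converts the right-hand side to $O(\kappa^{-j})$.

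The main obstacle is the tame Moser bound for $\vec F^s$ at $s>14$. Since $N(\vu)$ is linear in $\dx^3 \vu$ with coefficients depending on $\vu$ only through $\vu,\dx\vu,\dx^2\vu$, repeated application of the Leibniz and chain rules to $\dx^s[N(\vue_\kappa)]$ and removal of the top-order pieces $a\dx^{s+3}\vue_\kappa$, $b^s\dx^{s+2}\vue_\kappa$, $c^s\dx^{s+1}\vue_\kappa$, $d^s\dx^s\vue_\kappa$ leaves $\vec F^s$ as a sum of products in which at most one factor has derivative order in $[8,s-1]$, every other factor being of order $\le 7$ and thus bounded in $L^\infty$ by the uniform $H^{8,2}$-embedding. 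Once this structural fact is in hand, the one-dimensional Moser--Gagliardo--Nirenberg inequality closes the estimate; the bookkeeping grows with $s$, but introduces no new ideas beyond those in Section \ref{coeff:sec}.
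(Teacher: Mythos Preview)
Your overall strategy matches the paper's: freeze coefficients at $\vue_\kappa$, apply Theorem \ref{lem:apri} to the differentiated system at levels $s\le 14+j$, and close with a tame Moser bound on $\vec F^s$ that is \emph{linear} in the top norm because all but one factor is controlled by the fixed $H^{8,2}$ bound $M$. Two points deserve correction, though.

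First, your structural description of $\vec F^s$ is too optimistic. For $s=17$ (the case $j=3$) the Leibniz/Fa\`a di Bruno expansion of $\dx^s\bigl[a(\vu,\dx\vu,\dx^2\vu)\dx^3\vu\bigr]$ does produce monomials such as $(\partial_z a)\,\dx^{11}\vu\cdot\dx^{11}\vu$, where two factors carry eleven derivatives; so it is false that ``at most one factor has derivative order in $[8,s-1]$''. What \emph{is} true---and what the paper actually uses---is that at most one factor can exceed order $13$, since any two factors $\dx^\alpha\vu\cdot\dx^\beta\vu$ arising this way satisfy $\alpha+\beta\le s+5$. Because $H^{8,2}\hookrightarrow H^{14}\hookrightarrow C^{13}$, every factor of order $\le 13$ is bounded in $L^\infty$ by $C(M)$, and the one remaining high factor (of order $\le s-1$) sits in $L^2$. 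This gives the linear-in-top-norm bound you need; only the justification changes.

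Second, and more substantively, the statement fixes the interval $I$ from Theorem \ref{thm:unif}; you are not free to shrink it. Your absorption step requires $AT\,C(M,C_{17,M})\le\tfrac12$, which may force a strictly smaller time than the $T$ already chosen. The paper avoids this by partitioning $I$ into subintervals $I_k=[k\tld T,(k+1)\tld T]$ with $\tld T$ small enough for absorption, applying the estimate on each $I_k$ with initial datum $\vue_\kappa(k\tld T)$, and inducting on $k$. This iteration is what lets the constant $C$ in \eqref{pers:eq} be uniform on the \emph{original} $I$ (it becomes $(2A)^{T/\tld T}$ or similar, still independent of $\ep,\kappa$). Without this step your argument proves the bound only on a possibly shorter interval, which would not suffice for the downstream use in \eqref{lim:reg} and Theorem \ref{conv:BS}.
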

\begin{proof}
  Let $I_k = [k\tld T,(k+1)\tld T]$ for $\tld T$ to be fixed below and $0\le k\tld T \le T-\tld T$. Then considering \eqref{IVPe} with data $\left(\vue_\kappa(k\tld T),\vf(t+k\tld T)\right)$ Lemma \ref{coeff} is valid on $I_k$ and Lemma \ref{remai} can be refined to hold for $0\le s \le 8+j$. Namely, there's no way to get terms involving $\dx^{14}\vue\cdot\dx^{14+j}\vue$ by differentiating $N_{\vue}(\vue)$ $17$ times. This gives
  \begin{align*}
    \norm{\vue_\kappa}_{C^0_{I_k} H^{8+j,2}} \le C(\norm{\vue_\kappa(k\tld T)}_{H^{8+j,2}}
     + \norm{\vf_\kappa}_{L^1_I H^{8+j,2}})\\ + C\tld T C_{14+j,M}(1+M^{18+j})\norm{\vue}_{C^0_{I_k} H^{8+j,2}}
  \end{align*}
  with $C=A(\norm{(\vue_\kappa,\vf_\kappa)}_Y)$ from Theorem \ref{lem:apri}.
  Choosing $\tld T$ small enough and inducting on $k$ finishes the proof.
\end{proof}
Hence interpolating $\norm{\vue_\kappa-\vuee_\kappa}_{C^0_I H^{14}}$ between $L^2$ estimate in \eqref{diff:L2} and $H^{15}$ in \eqref{pers:eq} gives that $\vu_\kappa \in C^0_I H^{14}$. Doing the same with weights, implies
\begin{equation}\label{lim:reg}
\vu_\kappa \in C^0_I H^{8,2}
\end{equation}
\subsection{Uniform convergence}
\begin{theorem}\label{conv:BS}
  $\vu^\ep_\kappa \to \vu^\ep$ as $\kappa \to 0$ in $\linf_IH^{14}_x$ uniformly in $0<\ep\le 1$. Moreover, this convergence is uniform for $\vue$ coming from data in a compact set $K\Subset B_R(0)$, where $B_R(0)$ is a ball of radius $R$ centered at $0$ in $Y$.
\end{theorem}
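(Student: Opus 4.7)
The plan is to apply Theorem \ref{lem:apri} to the difference equation for $\vw = \vue - \vue_\kappa$ to obtain an $L^2$ bound of size $o(\kappa^{14})$, then combine this with the high regularity bound $O(\kappa^{-1})$ from Proposition \ref{pers}, and finally interpolate to conclude convergence in $\linf_I H^{14}_x$ with a rate that is uniform in $\ep$ and uniform over compact sets of data.

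First I would subtract the regularized equations for $\vue$ and $\vue_\kappa$, producing exactly as in \eqref{diff}
\begin{align*}
\begin{cases}
\dt\vw + L_{\vue,\vue_\kappa}\vw = -\ep\dx^4\vw + (\vf - \vf_\kappa),\\
\vw(x,0) = \vu_0 - \vu_{0,\kappa},
\end{cases}
\end{align*}
with $L_{\vue,\vue_\kappa}$ the linearization operator from the proof of Proposition \ref{conv}. For $\kappa$ small enough Proposition \ref{BS:data} gives $\norm{(\vu_{0,\kappa},\vf_\kappa)}_Y < R$, so both $\vue$ and $\vue_\kappa$ lie in $X^M_I$ by Theorem \ref{thm:unif}. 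Repeating the Lemma \ref{coeff} computation for this two-variable linearization would show that $L_{\vue,\vue_\kappa}$ satisfies (L1)-(L3) with bounds $\lambda$, $\tld C_0$, $C_1$ depending only on $R$, $M$ and the (NL) constants, hence independent of both $\ep$ and $\kappa$.

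Theorem \ref{lem:apri} applied to the difference equation would then yield the $L^2$-level estimate
\[
\norm{\vw}_{\linf_I L^2_x} \le A\bigl(\norm{\vu_0 - \vu_{0,\kappa}}_{L^2} + \norm{\vf - \vf_\kappa}_{L^1_I L^2_x}\bigr) = o(\kappa^{14}),
\]
by Proposition \ref{BS:data} and Remark \ref{rem:f}. On the other end of the interpolation scale, Proposition \ref{pers} with $j=1$ gives $\norm{\vue_\kappa}_{\linf_I H^{15}_x} \le \norm{\vue_\kappa}_{\linf_I H^{9,2}} = O(\kappa^{-1})$, while Theorem \ref{thm:unif} bounds $\norm{\vue}_{\linf_I H^{14}_x} \le \norm{\vue}_{\linf_I H^{8,2}} \le M$, so by the triangle inequality $\norm{\vw}_{\linf_I H^{15}_x} = O(\kappa^{-1})$. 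Interpolating $H^{14}$ between $L^2$ and $H^{15}$ would give
\[
\norm{\vw}_{\linf_I H^{14}_x} \les \norm{\vw}_{\linf_I L^2_x}^{1/15}\,\norm{\vw}_{\linf_I H^{15}_x}^{14/15} = o(\kappa^{14/15})\cdot O(\kappa^{-14/15}) = o(1),
\]
which is the desired convergence. Since every constant above ($A$, $M$, and the implicit constant in Proposition \ref{pers}) is $\ep$-independent, the bound is uniform in $\ep\in(0,1]$; uniformity over a compact $K\Subset B_R(0)\subset Y$ follows because the Bona-Smith rate $o(\kappa^{14})$ of Proposition \ref{BS:data} is itself uniform over compact subsets of $Y$.

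The main obstacle I anticipate is checking that the coefficient estimates for $L_{\vue,\vue_\kappa}$ are truly uniform in $\ep$ \emph{and} $\kappa$ simultaneously: the key is that once $\kappa$ is small enough, $\vue$ and $\vue_\kappa$ both belong to the same ball $X^M_I$, and then the Lemma \ref{coeff} argument — which only uses the Sobolev embedding of $H^{8,2}$ into $\linf$ together with (NL1)-(NL3) — produces $\ep$- and $\kappa$-independent (L1)-(L3) constants, so Theorem \ref{lem:apri} is applicable with a uniform $A$. A secondary technical point is that Proposition \ref{pers} bounds the $H^{8+j,2}$ norm rather than $H^{14+j}$ directly, but the pure-derivative component of $H^{8+j,2}$ in \eqref{def:norms} is precisely $\norm{\cdot}_{H^{14+j}}$, which is all that enters the unweighted $H^{14}$ interpolation above.
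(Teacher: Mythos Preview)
Your argument has a genuine gap at the high end of the interpolation. You write that Proposition \ref{pers} gives $\norm{\vue_\kappa}_{\linf_I H^{15}_x} = O(\kappa^{-1})$ and that Theorem \ref{thm:unif} gives $\norm{\vue}_{\linf_I H^{14}_x}\le M$, and then conclude ``by the triangle inequality $\norm{\vw}_{\linf_I H^{15}_x}=O(\kappa^{-1})$.'' But the triangle inequality in $H^{15}$ requires an $H^{15}$ bound on \emph{both} summands, and no such bound on $\vue$ is available uniformly in $\ep$: the data $(\vu_0,\vf)$ are only in $H^{8,2}\times L^1_IH^{8,2}$, i.e.\ at the $H^{14}$ level, and the parabolic smoothing of $-\ep\dx^4$ gives $H^{15}$ regularity for $t>0$ only with an $\ep$-dependent constant that blows up as $\ep\to0$. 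With only $\norm{\vw}_{L^2}=o(\kappa^{14})$ and $\norm{\vw}_{H^{14}}\le 2M$, interpolation yields $\norm{\vw}_{H^{14-\delta}}=o(1)$ for every $\delta>0$ but says nothing at the endpoint $H^{14}$, which is exactly the target space.

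This is precisely why the paper does \emph{not} interpolate. Instead it differentiates \eqref{eq:Hs} at $s=14$ for both $\vue$ and $\vue_\kappa$, subtracts, and applies Theorem \ref{lem:apri} directly to $\dx^{14}(\vue-\vue_\kappa)$. The forcing then splits into three pieces: $\dx^{14}(\vf-\vf_\kappa)=o(1)$ from Proposition \ref{BS:data}; a term $\vec F^{14}-\vec F^{14}_\kappa$ controlled by $\norm{\vue-\vue_\kappa}_{H^{13}}$, which interpolation between the $L^2$ bound $o(\kappa^{14})$ and the $H^{14}$ bound $O(1)$ makes $o(\kappa)$; and the critical commutator $(N_{14,\kappa}-N_{14})\vue_\kappa$, which costs $\norm{\dx^{17}\vue_\kappa}_{L^2}=O(\kappa^{-3})$ but is multiplied by a \emph{low} norm $\norm{\vue-\vue_\kappa}_{H^3}$ of size $o(\kappa^{11})$, so the product is still $o(1)$. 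The key Bona--Smith idea is that the extra $H^{17}$ regularity is only ever demanded of $\vue_\kappa$, never of $\vue$; your interpolation scheme demands $H^{15}$ of $\vue$ and therefore fails. The paper also needs the qualitative assumption \eqref{reg:apr} to justify applying Theorem \ref{lem:apri} to the $s=14$ equation, and removes it afterwards by a further approximation of the data --- a step your outline omits entirely.
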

\begin{proof}
  Redoing \eqref{diff:L2} for $\vue$ and $\vue_\kappa$ we get
   \begin{equation}\label{diff:BS}
   \norm{\vu^\ep-\vu^\ep_\kappa}_{\linf_IL^2_x}\le A (\norm{\vu_0-\vu_{0,\kappa}}_{L^2}+\norm{\vf-\vf_\kappa}_{L^1_IL^2_x})=o(\kappa^{14})
   \end{equation}
    where the last identity is by \eqref{eq:BS:L2} and Remark \ref{rem:f}.\\

We then subtract \eqref{eq:Hs} for $\vue_\kappa$ from \eqref{eq:Hs} for $\vue$ with $s=14$ and rewrite it as
  \begin{align*}
      \begin{cases}
    & \dt \dx^{14}(\vue-\vue_\kappa) + N_{14}(\vue-\vue_\kappa) =- \ep \dx^{18} (\vue-\vue_\kappa) +  \dx^{14}(\vf-\vf_\kappa)+\vec F^{14}-\vec F^{14}_\kappa\\
    & +(N_{14,\kappa}-N_{14})\vu^\ep_\kappa\\
    & \dx^{14}(\vue-\vue_\kappa)(x,0) = \dx^{14}(\vu_0(x)-\vu_{0,\kappa}(x))
  \end{cases}
   \end{align*}
   where $N_{14}$ is the operator from the Lemma \ref{coeff} for $s=14$ with coefficients evaluated at $\vue$ and $\vec F^{14} = \vec F^{14}(x,t,\vu^{\ep},\ldots,\dx^{13}\vu^{\ep})$, and likewise $N_{14,\kappa}$ and $F^{14}_\kappa$ are evaluated at $\vue_\kappa$ respectively. Therefore, assuming \eqref{reg:apr}, we apply the Theorem \ref{lem:apri}:
   \begin{align*}
    & \norm{\dx^{14}(\vue-\vue_\kappa)}_{L^{\infty}_I L^2_x} \le A(\norm{\dx^{14}(\vu_0-\vu_{0,\kappa})}_{L^2_x}
      + \norm{\dx^{14}(\vf-\vf_\kappa)}_{L^1_IL^2_x})\\
      & +A\norm{\vec F^{14}-\vec F^{14}_\kappa}_{L^1_IL^2_x}+A\norm{(N_{14,\kappa}-N_{14})\vu^\ep_\kappa}_{L^1_IL^2_x}\equiv I_1 + I_2 + I_3
  \end{align*}
 We estimate the right hand side term by term. By the Proposition \ref{BS:data}, $I_1 = o(1)$ as $\kappa \to 0$.\\
 For $I_2$ we write by the Fundamental Theorem of Calculus,
  \begin{align*}
    \vec F^{14}-\vec F^{14}_\kappa = \sum_{\alpha \le 13}\int_0^1 \der{F^{14}}{\vz^\alpha}\left(x,t,(\dx^\beta( \theta \vue + (1-\theta)\vue_\kappa))_{\beta\le 13}\right) d\theta \cdot \dx^{\alpha}(\vue-\vue_\kappa)
  \end{align*}
  Hence by Sobolev
\begin{align*}
  & I_2  \le AT C_{15,M} (1+M^{14})  \norm{ \vu^\ep - \vu^\ep_\kappa}_{\linf_I H^{13}_x}
\end{align*}
We then interpolate $H^{13}$ between $L^2$ and $H^{14}$  as in \eqref{diff:Hs} using $\eqref{diff:BS}$:
\begin{align*}
  & \norm{ \vu^\ep - \vu^\ep_\kappa}_{\linf_I H^{13}_x} \le \norm{ \vu^\ep - \vu^\ep_\kappa}_{\linf_I L^2_x}^{\frac{1}{14}} \norm{ \vu^\ep - \vu^\ep_\kappa}_{\linf_I H^{14}_x}^{\frac{13}{14}} = o(\kappa)
\end{align*}
  For $I_3$, we proceed as for $I_2$ using Fundamental Theorem of Calculus
  \begin{align*}
    (N_{14,\kappa}-N_{14})\vu^\ep_\kappa = \int_0^1 \der{a}{\vz^\alpha}\left(x,t,(\dx^\beta( \theta \vue_\kappa + (1-\theta)\vue))_{\beta\le 2}\right) d\theta \cdot \dx^{\alpha}(\vue_\kappa-\vue)\,\dx^{17}\vue_\kappa
  \end{align*}
  Hence by the Cauchy-Schwarz and Sobolev:
  \begin{align*}
      \norm{(N_{14,\kappa}-N_{14})\vu^\ep_\kappa}_{L^1_I L^2_x} \le C_{5,M}(1+M^{3})\norm{ \vu^\ep - \vu^\ep_\kappa}_{\linf_I H^{3}_x}      \norm{\dx^{17}\vu^\ep_\kappa}_{\linf_I L^2_x}
  \end{align*}
Thus by  \eqref{diff:BS} and \eqref{pers:eq}
  \begin{align*}
    I_3  =  o(\kappa^{17})O(\kappa^{-3}) = o(\kappa^{14})
  \end{align*}
  Combining the estimates for terms $I_1$-$I_3$, \eqref{diff:BS} and using equivalence of norms \eqref{def:norms} gives $\vue_\kappa \to \vue$ in $C^0_I H^{14}$ provided \eqref{reg:apr} is valid.\\

  To finish the proof without \eqref{reg:apr}, we proceed as in the proof \eqref{thm:unif}, except when we regularize the data $(\vu_{0,m},\vf_m) \to (\vu_0,\vf)$ in $Y$, we use compactness of $K=\{(\vu_{0,m},\vf_m)\}\cup \{(\vu_0,\vf)\}$ in $Y$ to ensure that the convergence rate $\vue_{m,\kappa} \to \vue_m$ as $\kappa \to 0$ is uniform in $m$.\\

  Finally, as the rate of convergence in $\kappa$ comes from Lemma \ref{BS} as longs as $\norm{(\vu_0,\vf)}_Y < R$, it remains valid for data in a compact set $K\Subset B_R(0)$.
  \end{proof}

Redoing the Proposition \ref{conv:BS} with weights, we get $\vue_\kappa \to \vue$ in $C^0_I H^{8,2}$.
\subsection{Time continuity and continuous dependence}
By the Proposition \ref{conv} $\vue-\vue_\kappa \weakto \vu-\vu_\kappa$ in $\linf_I H^{14}$. Thus for fixed $\kappa$, we apply Fatou's lemma to $\int \dx^{14}(\vue - \vue_\kappa)\cdot \vec \phi\, dx $ for a vector valued test function $\vec \phi \in C_0^\infty(\R)$ to get
  \begin{align}\label{conv:d}
  \norm{\vu-\vu_\kappa}_{\linf_I H^{14}_x} \le \liminf_{\ep \to 0}\norm{\vu^\ep-\vu^\ep_\kappa}_{\linf_I  H^{14}_x} = o(1)
  \end{align}
  as $\kappa \to 0$. Thus the Theorem \ref{conv:BS} is also valid with $\ep = 0$.\\

 Combining \eqref{conv:d} with \eqref{lim:reg}, gives $\vu \in C^0_I H^{14}$. Moreover,
  \begin{align*}
    \vue - \vu = (\vue-\vue_\kappa) + (\vu_\kappa - \vu)+(\vue_\kappa - \vu_\kappa)
  \end{align*}
  with the first two terms that go to $0$ as $\kappa \to 0$ in $C^0_I H^{14}$ uniformly in $\ep$ and the last term goes  $0$  for each fixed $\kappa$. Hence $\vue \to \vu$ in $C^0_I H^{14}$.\\

  Redoing the same argument with weights proves     $\vu \in C^0_I H^{8,2}$
  \begin{equation}\label{conv:top}
 \lim_{\ep \to 0} \norm{\vu^{\ep} - \vu}_{C^0_I H^{8,2}} = 0
  \end{equation}
    with the convergence rate dependent on the data profile, namely the compact set $\K\Subset Y$.
    \begin{proposition}\label{cont:prop}
        Suppose $(\vu_{0,m},\vf_m) \to (\vu_{0},\vf)$ in $Y$. Then the respective solutions of \eqref{eq} $\vu_m\to \vu$ in $X_I = C^0_IH^{8,2}\cap C^1_I H^{4,2}$.
    \end{proposition}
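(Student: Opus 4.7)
The plan is a Bona--Smith triangle argument. Given the convergent sequence $(\vu_{0,m},\vf_m)\to(\vu_0,\vf)$ in $Y$, the set
\[
\K \equiv \{(\vu_{0,m},\vf_m)\}_{m\ge 1}\cup\{(\vu_0,\vf)\}
\]
is compact in $Y$. For $m$ large enough, $\norm{(\vu_{0,m},\vf_m)}_Y<R$, so Theorem \ref{thm:unif} produces solutions $\vu_m\in X^M_I$ with a common time interval $I$ (the constants $A$, $T$ in Theorem \ref{thm:unif} and the bounds $M$, $C(M)$ depend only on $R$).

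Regularize both $(\vu_{0,m},\vf_m)$ and $(\vu_0,\vf)$ by the Fourier cutoff at scale $\kappa$ as in Section \ref{sec:Bona} to obtain data $(\vu_{0,m,\kappa},\vf_{m,\kappa})$ and $(\vu_{0,\kappa},\vf_\kappa)$, and corresponding solutions $\vu_{m,\kappa}$, $\vu_\kappa$ defined on the same $I$. Then write
\[
\vu_m-\vu=(\vu_m-\vu_{m,\kappa})+(\vu_{m,\kappa}-\vu_\kappa)+(\vu_\kappa-\vu).
\]
For the first and third terms I would invoke \eqref{conv:top} (the $\ep=0$ version of Theorem \ref{conv:BS}): since $\K\Subset B_R(0)\subset Y$ is compact, the convergence $\vu_\kappa\to\vu$ in $C^0_IH^{8,2}$ holds uniformly for data in $\K$. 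Hence, given $\eta>0$, one can choose $\kappa_0>0$ so that
\[
\sup_{m}\norm{\vu_m-\vu_{m,\kappa}}_{C^0_IH^{8,2}}+\norm{\vu_\kappa-\vu}_{C^0_IH^{8,2}}<\tfrac{\eta}{2}\quad\text{for }\kappa\le\kappa_0.
\]

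With $\kappa=\kappa_0$ fixed, the remaining task is to show $\vu_{m,\kappa}\to\vu_\kappa$ in $C^0_IH^{8,2}$ as $m\to\infty$. Both solutions are smooth enough that \eqref{reg:apr} (at all orders) is available, by Proposition \ref{pers}, with higher Sobolev norms bounded uniformly in $m$ (the data are uniformly bounded in every Sobolev norm because $\K$ is compact and Bona--Smith is applied at a fixed scale). The difference $\vw_m=\vu_{m,\kappa}-\vu_\kappa$ satisfies a linear equation identical in structure to \eqref{diff} (with $\ep=0$ and the inhomogeneity $\vf_{m,\kappa}-\vf_\kappa$ in place of the parabolic remainder), whose coefficients satisfy (L1)--(L3) with constants controlled as in Lemma \ref{coeff}. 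Theorem \ref{lem:apri} then gives the $L^2$ estimate
\[
\norm{\vw_m}_{C^0_I L^2_x}\les \norm{\vu_{0,m,\kappa}-\vu_{0,\kappa}}_{L^2}+\norm{\vf_{m,\kappa}-\vf_\kappa}_{L^1_IL^2_x}\xrightarrow{m\to\infty}0,
\]
and analogously with the weight $\jap{x}^2$ after multiplying by $\jap{x}^2$ as in \eqref{eq:wt}. Interpolating between this $L^2$ (and weighted $L^2$) decay and the uniform-in-$m$ higher regularity bounds from Proposition \ref{pers}, exactly as in the interpolation \eqref{diff:Hs}, upgrades the convergence to $C^0_IH^{8,2}$. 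Combining with the previous paragraph yields $\vu_m\to\vu$ in $C^0_IH^{8,2}$.

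Finally, the $C^1_IH^{4,2}$ convergence follows from the equation itself: $\dt\vu_m-\dt\vu=(\vf_m-\vf)-(N_{\vu_m}\vu_m-N_{\vu}\vu)$. The first term tends to $0$ in $C^0_IH^{4,2}$ by definition of the $Y$ norm \eqref{eq:data}, and the Moser difference estimate \eqref{est:nld} from Lemma \ref{lem:Mos} (with $s=8$) bounds the second term by $C_{M}\norm{\vu_m-\vu}_{C^0_IH^{8,2}}$, which we have just shown goes to $0$. The only nontrivial step above is the uniformity in $m$ of the outer terms under $\kappa\to 0$; this is precisely the content of the final assertion of Theorem \ref{conv:BS}, so the compactness of $\K$ in $Y$ is doing the key work.
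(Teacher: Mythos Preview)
Your proof is correct, but it takes a different route from the paper's. The paper uses the \emph{viscosity} parameter $\ep$ for the triangle:
\[
\vu_m-\vu=(\vu_m-\vue_m)+(\vue_m-\vue)+(\vue-\vu),
\]
where the two outer terms go to zero as $\ep\to 0$ uniformly in $m$ by \eqref{conv:top} (compactness of $\K$), and the middle term $\vue_m-\vue$ is dispatched in one line by Corollary~\ref{cont}, since for fixed $\ep>0$ the regularized flow is Lipschitz by the contraction mapping. You instead run a Bona--Smith triangle in $\kappa$, which forces you to prove the middle convergence $\vu_{m,\kappa}\to\vu_\kappa$ from scratch via Theorem~\ref{lem:apri} plus interpolation against the higher bounds of Proposition~\ref{pers}. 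This works, but is more labor than the paper's route; the payoff is that you avoid iterating Corollary~\ref{cont} (which is literally stated only on $I_\ep$) across subintervals to cover all of $I$.

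Two small points of care in your version: (i) the reference for the outer terms should really be \eqref{conv:d} and its weighted analogue (the $\ep=0$ statement of Theorem~\ref{conv:BS}), not \eqref{conv:top}, which concerns $\ep\to 0$; (ii) Proposition~\ref{pers} is proved for $\vue_\kappa$, so to get the uniform-in-$m$ higher bounds for $\vu_{m,\kappa}$ you should pass to the weak-$*$ limit in $\linf_I H^{8+j,2}$ as in \eqref{eq:limit:weak}, rather than cite persistence for \eqref{eq} directly (which in the paper is established only \emph{after} Proposition~\ref{cont:prop}). Neither affects the validity of your argument.
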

    \begin{proof}
      For $m$ large enough the data have size less that $R$. For such $m$ we write
      \begin{align*}
        \vu_m - \vu = (\vu_m - \vue_m) + (\vue - \vu) + (\vue_m - \vue)
      \end{align*}
      The first two terms converge to $0$ in $C^0_I H^{8,2}$ as $\ep \to 0$ uniformly in $m$ by compactness of data, while the third goes to $0$ as $m \to \infty$ for each fixed $\ep$ by the Corollary \ref{cont}.\\

      For the $C^1_IH^{4,2}$ use \eqref{eq} and Lemma \ref{lem:Mos}.
    \end{proof}
    Finally, for Persistence of Regularity for \eqref{eq}, proceed inductively $s=15, 16,\ldots$ as in the Proposition \ref{pers}, applying Theorem \ref{lem:apri} for \eqref{eq:Hs}, so that for each step terms from the Lemma \ref{remai} are controlled by the previous step. Then remove \ref{reg:apr} by the Proposition \ref{cont:prop}. This completes the proof of the Main Theorem \ref{thm:nl}.
\section*{Acknowledgement}
Results of this paper were obtained during my Ph.D. studies at the University
of Chicago and are also contained in my thesis \cite{Akhun}. I would like to express immense
gratitude to my supervisor Carlos Kenig, whose
guidance and support were essential for the successful completion of this project.\\

I would also like to thank A. Allan, F. Chung, N. Longo and W. Schlag for useful conversations regarding this work and the referee for offering helpful suggestions that improved the presentation of this work.

\medskip
Received xxxx 20xx; revised xxxx 20xx.
\medskip

\end{document}